\documentclass[a4paper,fleqn,10pt]{article}

\usepackage{fullpage,color}
\usepackage{dsfont}
\usepackage[T1]{fontenc}
\usepackage[utf8]{inputenc}
\usepackage[english]{babel}
\usepackage{amsmath}
\usepackage{amsfonts}
\usepackage{amssymb}
\usepackage{amsthm}
\usepackage{mathpazo}
\usepackage{eqnarray}
\usepackage{comment}
\usepackage{ulem}
\usepackage[pdftex]{graphicx}
\usepackage[margin=1in]{geometry}
\usepackage{float}
\usepackage{cite}
\usepackage{authblk}
\usepackage{mathtools}
\usepackage{empheq}
\usepackage{multirow}
\usepackage[table]{xcolor}
% Package graph and figure drawings
\usepackage{tikz}
%\usetikzlibrary{patterns}
\usetikzlibrary{calc,
				fadings,
				decorations.pathreplacing,
				arrows,
				positioning,
				shapes,
				intersections,
				quotes,
				patterns,
				automata,
				fit}
%\usetikzlibrary{intersections,quotes}
\usepackage{pgfplots}
\definecolor{totocolor}{rgb}{1,0.6,1}
\colorlet{LightRed}{red!30}
\colorlet{ColorPink}{blue!20}
\colorlet{lightgreen}{green!50}
\tikzstyle{terminal}=[circle,draw]
\pgfplotsset{width=7cm,compat=1.15}

\tikzset{
    state/.style={
           rectangle,
           rounded corners,
           draw=black, thick,
           minimum height=2em,
           inner sep=2pt,
           text centered,
           },
}

% New packages to include figures
\usepackage{caption}
\usepackage{subcaption}
\captionsetup{font=small}
\usepackage{manfnt}
\definecolor{couleur_r}{RGB}{168, 173, 0}
\usepackage{soul}

% Package for tabular and comments
\usepackage{accents}

\usepackage{comment}
\usepackage{hyperref}

%%%%%%%%%%%%%%%%%%%%%%%%%%%%%
% Mathieu's definition

\newcommand{\cqfd}{\nobreak \ifvmode \relax \else
      \ifdim\lastskip<1.5em \hskip-\lastskip
      \hskip1.5em plus0em minus0.5em \fi \nobreak
      \vrule height0.75em width0.5em depth0.25em\fi}

\makeatletter
\newcommand{\leqnomode}{\tagsleft@true}
\makeatother
\usepackage[skipempty]{credits}

% For numbering the hypotheses
\newtheorem{hyp}{Hypothesis}[section]

\def\mm#1{{\color{red}#1}}
\usepackage{stmaryrd}
\newcommand{\inhib}{\relbar\mapsfromchar}

%%%%%%%%%%%%%%%%%%%%%%%%%%%
% Albert's definitions

\def\<{\langle}
\def\>{\rangle}
\def\t{\widetilde}

%%%%%%%%%%%%%%%%%%%%%%%%%%
\def\asymp{{\textsc{asym}}}
\def\adm{{\textsc{adm}}}
\def\panin{{\textsc{panin}}}
\def\pdac{{\textsc{pdac}}}

\def\tasymp{{t_\asymp}}
\def\tadm{{t_\adm}}
\def\tpanin{{t_\panin}}
\def\tpdac{{t_\pdac}}

\def\pdacadvanc{{t_\pdac^{advanced}}}
\def\pdacearly{{t_\pdac^{early}}}
\def\tpaninadvanc{{t_\panin}}

\newcommand{\verteq}{\rotatebox{90}{$\,=$}}

\def\Chi{\raise .3ex
\hbox{\large $\chi$}}

\def\({\Bigl (}
\def\){\Bigr )}

\newcommand{\bea}{$$ \begin{array}{lll}}
\newcommand{\eea}{\end{array} $$}
\newcommand{\bi}{\begin{itemize}}
\newcommand{\ei}{\end{itemize}}

%%%%%%%%%%%%%%%%%%%%%%%%%%%%%%%%%%%%%%%%%%%%%%
%This is Markus' definition

\frenchspacing \sloppy
\newtheorem{prop}{Proposition}
\newtheorem{theorem}{Theorem}

\newtheorem{lemma}{Lemma}

\newtheorem{definition}{Definition}
\newtheorem{remark}{Remark}

\newcommand{\dt}{\frac{d}{dt}}
\newcommand{\ds}{\frac{d}{ds}}
\newcommand{\NC}{N_c}
\newcommand{\F}{\mathcal{F}}
\newcommand{\X}{\mathcal{X}}

\newcommand{\defeq}{\vcentcolon=}
\renewenvironment{proof}{\noindent{\bf Proof.}}{\hfill
  $\blacksquare$\par\noindent}

% Keywords command
\providecommand{\keywords}[1]
{
  \small	
  \textbf{\textit{Keywords: }} #1
}
%CREDITS
% The ordering of the values indicates the ordering of the original
% taxonomy, i.e.:
%
% - Conceptualization
% - Data curation
% - Formal analysis
% - Funding acquisition
% - Investigation
% - Methodology
% - Project administration
% - Resources
% - Software
% - Supervision
% - Validation
% - Visualization
% - Writing -- original draft
% - Writing -- review \& editing
\credit{M.J.C}  {1,0,1,0,1,1,0,0,1,0,0,1,1,1}
\credit{S.C}    {1,1,0,1,1,0,0,0,0,0,0,1,1,1}
\credit{F.H}    {1,0,1,1,1,1,0,0,0,0,0,1,1,1}
\credit{F.M}    {1,1,0,1,1,0,0,0,0,0,0,1,1,1}
\credit{M.M}    {1,0,1,0,1,1,0,0,1,0,0,1,1,1}
\credit{P.P}    {1,0,1,1,1,1,0,0,0,0,0,1,1,1}
%%%%%%%%%%%%%%%%%%%%%%%%%%%%%%%%%%%%%%%%%%%%%%%%%%%%%%%%%
\begin{document}

\title{%{Mathematical model of tumorigenesis and axons regulation for the pancreatic cancer.}
A continuous approach of modeling tumorigenesis and axons regulation for the pancreatic cancer.}

\date{}
\author[2]{\small Marie-Jose Chaaya}
\author[1]{\small Sophie Chauvet}
\author[2]{\small Florence Hubert}
\author[1]{\small Fanny Mann}
\author[2, 3]{\small Mathieu Mezache}
\author[2]{\small Pierre Pudlo}

\affil[1]{\footnotesize Aix Marseille Univ, CNRS, IBDM (UMR 7288), Turing Centre for Living systems, Marseille, France}
\affil[2]{\footnotesize Aix Marseille Univ, CNRS, I2M (UMR 7373), Turing Centre for Living systems, Marseille, France}
\affil[3]{\footnotesize Université Paris-Saclay, INRAE, MaIAGE (UR 1404), 78350 Jouy-en-Josas, France}

\maketitle
\keywords{Dynamical system, Partial differential equations, Cancer, Parameter calibration, In silico denervation}
\begin{abstract}

The pancreatic innervation undergoes dynamic remodeling during the development of pancreatic ductal adenocarcinoma (PDAC). 
Denervation experiments have shown that different types of axons can exert either pro- or anti-tumor effects, but conflicting results exist in the literature, leaving the overall influence of the nervous system on PDAC incompletely understood. 
To address this gap, we propose a continuous mathematical model of nerve-tumor interactions that allows in silico simulation of denervation at different phases of tumor development. 
This model takes into account the pro- or anti-tumor properties of different types of axons (sympathetic or sensory) and their distinct remodeling dynamics during PDAC development. 
We observe a “shift effect” where an initial pro-tumor effect of sympathetic axon denervation is later outweighed by the anti-tumor effect of sensory axon denervation, leading to a transition from an overall protective to a deleterious role of the nervous system on PDAC tumorigenesis. 
Our model also highlights the importance of the impact of sympathetic axon remodeling dynamics on tumor progression. 
These findings may guide strategies targeting the nervous system to improve PDAC treatment.

\end{abstract}
%====================================================================
\section{Introduction}\label{sec:introduction}
{
\noindent The nervous system plays an important role in regulating various bodily functions and disease processes, including cancer development and progression \cite{winkler2023cancer}.
Denervation studies have shown that the peripheral nervous system (PNS) can either promote or inhibit cancer growth, depending on the specific types of nerves and cancers involved. 
For example, in mouse models of pancreatic ductal adenocarcinoma (PDAC), selective ablation of pancreatic sympathetic innervation has been associated with accelerated tumor growth, increased metastasis and decreased survival \cite{guillot2022sympathetic}. 
Conversely, removal or silencing of sensory neurons has been shown to slow tumor growth and improve survival (\cite{saloman2016ablation}; \cite{sinha2017panin}). 
These findings, together with others, suggest a broad model in which the autonomic nervous system (including both its sympathetic and parasympathetic branches) has an anti-tumor property in PDAC, whereas the sensory nervous system has an inverse pro-tumor activity (Figure \ref{fig:scheme_axons_pdac}). \\
}

{
\noindent According to the data presented, the combined effect of the PNS on PDAC tumour progression results from a mixture of pro- and anti-tumour activities. 
This combined effect can be assessed in experimental models by surgical denervation of the pancreas, which disrupts the mixed nerve of sympathetic and sensory axons supplying the pancreas. 
Such an intervention mimics procedures performed in patients (eg. celiac neurolysis and splanchnicectomy) to manage abdominal pain in unresectable PDAC. 
However, surgical denervation studies in mice have led to divergent results. 
A pro-tumor effect was observed when denervation was performed before the onset of pathology \cite{guillot2022sympathetic}, whereas an anti-tumor effect was observed when denervation was performed after tumor establishment \cite{renz2018beta2}. 
The reasons behind these conflicting results are not yet understood.
One hypothesis suggests a switch in sympathetic function over the course of pathology. 
Initially, sympathetic axons may have an anti-tumor effect in the early (pre-)cancer stages, before possibly switching to a promoting role on tumor growth at a later stage. 
Another interpretation is that the relative abundance of pro- and anti-tumoral axons may vary at different denervation times. 
Indeed, studies have shown different remodeling patterns for sympathetic and sensory innervation. 
Higher levels of sympathetic innervation are found in pre-cancerous lesions compared to both healthy and cancerous tissues, while sensory axons are markedly increased within the cancerous lesions (\cite{guillot2022sympathetic}; \cite{demir2015neural}). \\
}

{
\noindent These findings highlight the importance of the PNS as a potential therapeutic target for the modulation of PDAC. 
However, they also highlight the need for a deeper understanding of the individual and combined effects of the different axon types, taking into account their remodeling patterns and influence on the tumor, in order to develop strategies to deplete or inhibit the PNS in PDAC. 
To address this need, we develop a mathematical model that allow us to study the consequences of denervating sensory and/or sympathetic axons at different times during tumor development and progression. \\
}

{
\noindent So far, two mathematical models have been developed to better understand the influence of the PNS on cancer. 
The first model investigated the pro-tumoral effect of the autonomic nervous system in prostate cancer \cite{lolas2016tumour}. 
However, due to the opposite function of the sympathetic innervation in pancreatic cancer, another mathematical model was developed to specifically study PDAC \cite{chauvet2023tumorigenesis}. 
This model formalized the interactions between cancer progression and axons using a compartmentalized differential equations model, where each compartment corresponds to a stage of cancer progression (healthy, pre-cancerous, cancerous). 
The asymptotic behavior of the system shows that the pathological state, where only cancer cells persist, is globally asymptotically stable. 
The impact of denervation was simulated in silico and recapitulated the biological data of denervation performed at early stage, before the onset of pathology. 
However, other times of denervation have not been investigated. \\
}

\noindent The aim of this paper is to introduce a new continuous mathematical model describing the relationship between axons and cancer, and to simulate in silico denervation of sympathetic and sensory neurons at different times. 
The continuous model considers the cell phenotype (from healthy to cancerous) as a continuous variable. 
Consequently, the description of cancer progression and axon remodeling occurring during this process becomes more precise. \\

\noindent We organized the paper as follows. 
In Section \ref{sec:model}, we set up the model illustrating the effect that PNS axons have on cancer development and progression. 
A theoretical study on the well-posedness of the model is given in Section \ref{sec:wellposedness}. 
This is followed by a qualitative study in Section \ref{sec:qualitative_study} where we show that the final state of the disease can belong to one out of three cases: either no cancer cells exist, either cancer and healthy cells co-exist, or only cancer cells exist. 
Also, explicit bounds on the time of appearance of the first cancer cells are given under some hypotheses on the parameters. 
Next, in Section \ref{sec:param}, we give precision on the parameters of the model and explain how to denervate in silico. 
We then calibrate the model in Section \ref{sec:calibration} by minimizing a 2-dimensional criterion. 
We obtain several sets of parameters that fit the data. 
Using some of the sets of parameters obtained, we apply numerical simulations to study in silico denervation using an indicator of invasive potential detailed in Section \ref{sec:indic}. 
In Section \ref{sec:axonsrole}, we perform in silico denervation separately for each axon type (sympathetic or sensory) or simultaneously at defined time points.
The dynamical system and its in silico denervation are implemented using a finite volume approach and the algorithm can be found at (\href{https://github.com/MarieJosec/PDE_Axons_Innerv}{\texttt{https://github.com/MarieJosec/PDE\_Axons\_Innerv}}).
In Section \ref{sec:heatmap}, denervation is performed for each axon type or for both at different time points.  
The results recapitulate the different outcomes of surgical denervation observed in \cite{guillot2022sympathetic} and  \cite{renz2018beta2} [Renz et al., 2018a] and support a ’shift effect’ of PNS function in PDAC from anti-tumor to pro-tumor. 
The model predicts that this transition does not occur through a change in sympathetic function, but rather depends on the remodeling dynamics of the sympathetic and sensory axons and on the strength of the sympathetic inhibition on tumor growth.

\begin{figure}[ht!]
\centering
\scalebox{0.9}{\begin{tikzpicture}[scale=0.6,
  box/.style={rectangle, draw=blue, thick, fill=blue!20, text width=13em,align=center, rounded corners, minimum height=4em},
    box2/.style={rectangle, draw=red, thick, fill=red!20, text width=13em,align=center, rounded corners, minimum height=4em},
    boxt/.style={rectangle, draw=black, thick, fill=black!10, text width=20em,align=center, rounded corners, minimum height=8em},
     boxtt/.style={rectangle, draw=black, thick, fill=black!2, text width=6em,align=center, rounded corners, minimum height=3em},
    line/.style ={ ultra thick, -latex', shorten <=2pt,shorten >=2pt},
    fleche/.style={->, >=latex', shorten >=2pt, shorten <=2pt,ultra thick},
    ]
     \draw (0,0) node[boxt] (PDAC){{\bf PDAC}};
     \draw (3,-1) node[boxtt] (pdac_prog){{\bf Progression}};
     \draw (-3,-1) node[boxtt] (pdac_gro){{\bf Growth}};
     \draw (0,10) node[box] (para){{\bf Parasympathetic neurons}};
     \draw (10,10) node[box] (symp){{\bf Sympathetic neurons}};
     \draw (-10,10) node[box2] (sensor){{\bf Sensory neurons}};
     
      % draw the paths and and print some Text below/above the graph
 
 \draw[fleche](sensor.south) to[bend right=20] (-11,-5)  --node[above,midway,sloped]{\small \cite{saloman2016ablation}} (3,-5)-- (pdac_prog.south);
 
 \draw[fleche](sensor.south) -- (-10,-3) node[below, midway, sloped]{\small \cite{banh2020neurons}} --(-3,-3)--(pdac_gro.south);
 
 \draw[fleche](symp.south) to [bend right = 20]  node[above,midway,sloped]{\small \cite{renz2018beta2}} (4.9,2.4)
 ;
\draw [line,-|] (para.south) -- (PDAC.north) node[above,midway,sloped]{\small \cite{hayakawa2017nerve}} node[below,midway,sloped]{\small \cite{ renz2018cholinergic}};

\draw [fleche] (sensor.south) -- (PDAC.north west) node[above,midway,sloped]{\small \cite{liddle2007role}} node[below,midway,sloped]{\small \cite{sinha2017panin}};

\draw [line, -|] (symp.south) -- (PDAC.north east) node[below,midway,sloped]{\small \cite{guillot2022sympathetic}};

\end{tikzpicture}
}
\caption{Graphical description of the state-of-the-art concerning the interactions between sensory axons, parasympathetic neurons, sympathetic axons and tumor cells (PDAC). The arrow with the symbol $\rightarrow$ denotes a promoting effect and the arrow with the symbol $\inhib $ denotes an inhibiting effect. %The color red emphasises the antagonism found in the literature.
}\label{fig:scheme_axons_pdac}
\end{figure}
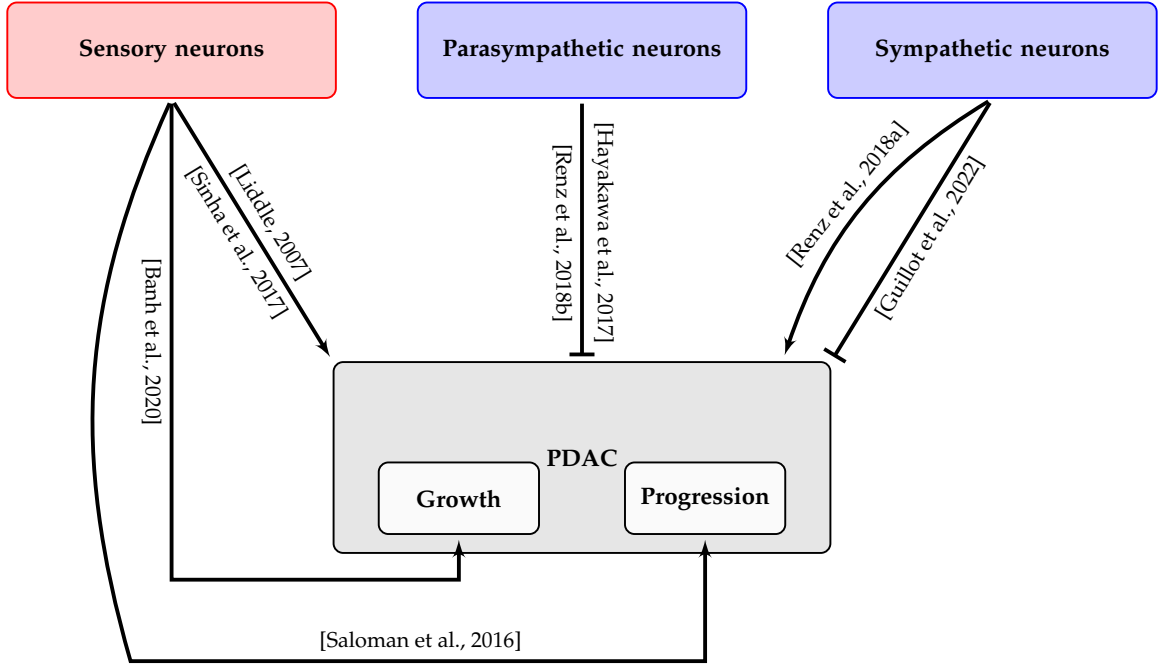

%=====================================================================
\section{Mathematical model} \label{sec:math}
%=====================================================================

In this section, we establish and explore an original continuous model for studying cancer progression and its regulation by axons. Initially, a compartmental model was introduced in \cite{chauvet2023tumorigenesis} to investigate the interactions. However, this model, based on ordinary differential equations (ODEs), focused on discrete stages of cancer progression from a healthy state to  a tumoral stage. Each stage was treated as a variable, compartmentalizing the different steps, which failed to capture the continuous nature of the biological process. To address this limitation, we introduce another mathematical model that adopts a continuous framework for cell phenotypes, spanning from a healthy state to a tumoral stage, thereby bringing it closer to the reality of the biological process. This model also considers the heterogeneity of the cell phenotype. In alignment with recent literature (cf. \cite{eftimie2020kinetic}, which introduces a phenotype-structured model to describe the heterogeneous population of macrophages), we propose employing structured population dynamics in the model. 

\subsection{Model presentation}\label{sec:model}
We focus on the pancreas as the domain of our model,
by taking into account the pancreatic cells and the nerve axons. In this model, we denote the unknowns $Q$, $A_1$ and $A_2$
where:
\begin{itemize}
\item  $Q(t,x)$ is the density of cells structured by the progression state of the disease. The variable $t$ corresponds to time with $t\in [0,+\infty)$ and the variable $x$ corresponds to the phenotype 
of the cell where  $x\in \Omega$ with $\Omega\defeq \mathbb{R}$ or $\Omega\defeq (-L,L)$ with the constant $L>0$ (finite truncated domain).  Cells with a positive phenotype correspond to cancerous
cells. Whereas for $x\in(-L,0)$, we start from healthy cells undergoing 
precancerous phenotypes before reaching the cancerous state. The lower the phenotype variable $x$ is, the healthier the cells $Q(t,x)$ are.
\item  The variable $A_1(t)$ is the normalized density of the sympathetic axons with respect to time $t\in[0,+\infty)$. 
\vspace*{1mm}
\item  The variable $A_2(t)$ is the normalized density of sensory axons with respect to time $t\in[0,+\infty)$. 
\vspace*{1mm}
\end{itemize}
Hence, the variables $A_i$ for $i=1,\, 2$ are unit-less and are bounded, i.e.   $A_i(t)\in (0,1)$ for $t\geq 0$. 
\begin{comment}
\begin{figure}[ht!]
\begin{center}
\begin{tikzpicture}[node distance = 3cm]
\node (init) {$\ $};
\node [right of=init](init1) {$\ $};
\node[right of=init1] (init2) {$x =0$};
\node[right of=init2] (init3) {$\ $};
\node[right of=init3] (init4) {$\ $};
\draw[-] (init) -- (init2) node [pos=0.15, above] {Healthy};
\draw[->] (init2) -- (init4) node [pos=0.05, above] {Growth apparition};
\draw  (init4) node[above]  {Cancer};
\draw (init) node[below] {$ $};
\draw  (init4) node[below]  {$x$};
\draw (init) node[pos=0.15, below] {\small{Acinus}};
\draw (init1)--(init2) node[pos=0.15, below] {\small{ADM}};
\draw (init2)--(init3) node[pos=0.15, below] {\small{PanIN 1 PanIN 2 PanIN 3}};
\draw (init3) --(init4) node[pos=0.15, below] {\small{PDAC}};
\end{tikzpicture}
\end{center}
\caption{Phenotype axis of the continuous variable $x$ modeling the progression state of the cells.}\label{fig:scheme_pheno_axis}
\end{figure}
\end{comment}

\noindent The total amount of pancreatic cells $N(t)$ and the total amount of cancer cells $\NC(t)$ can be obtained from the density $Q$ by 
$$N(t) = \int_\Omega Q(t,x) dx,\quad \NC (t)  = \int_\Omega\psi(x) Q(t,x) dx,  $$
where $\psi$ is an indicator function whose support is a subset of $\mathbb{R}_+$.
These two macroscopic quantities play significant roles in the dynamics  of the model. They allow to compute crucial indicators such as the proportion of cancer cells in the system or the growth of the size of the pancreas induced by cancer cells. 
For instance, the ratio ${\NC(t)}/{N(t)}$ corresponds to the proportion of cancer cells in the system.
If the ratio is equal to $1$ then all the cells are cancerous ones and if the ratio is equal to 0, then no cancer cells are present in the system.
Also the ratio ${N(t)}/{N(0)}$ gives information on the growth of the size of the pancreas.\\
Moreover, we introduce the notation $\X \defeq (Q,A_1,A_2)$ to group the unknowns into a tuple. Hence, the dynamic of $Q(t,x)$ is given by the following transport-growth equation :
$$\partial_t Q(t,x)  + \underbrace{\partial_x \big[f(t,x;{\mathcal X})Q(t,x) \big]}_{\text{Progression of the disease}} = \underbrace{g(t,x;{\mathcal X})}_{\text{Growth term}}.$$
\\

\noindent{\bf Progression of the disease.} 
We model the evolution of the disease as a transport term on the phenotype axis for the cell densities $Q$ in the partial differential equation governing the dynamics of cells. 
The speed of progression of the disease (i.e. the transport speed) denoted $f(t,x;{\mathcal X})$ is regulated by the presence of the axons and the cancer cells. It takes the following form
\begin{equation}
f(t,x;{\mathcal X}): = \pi(x)  \big[1- \beta(x)\rho(A_1(t)) + \delta(x)A_2(t)\big] +\eta \left(x, \tfrac{\NC (t)}{N(t)}\right)
\label{eq:transport_term_pde}
\end{equation}
where
\begin{itemize}
    \item $\pi(x) $ represents a basal amplitude for the speed of disease progression. Since we expect the %transdifferentiation 
    transformation of healthy cells to be slow at early stages, we consider the transfer to be almost negligible for $x\ll 0$. This transport is then expected to increase up to a plateau observed during the early PDAC stage. Moreover, we naturallly assume that at the boundary of the phenotype axis ($+\infty$ or $L$), the function $\pi$ vanishes.  
    \item The basal amplitude is modulated by the presence of the sympathetic axons that slow down the disease progression (cf. \cite{guillot2022sympathetic}).
    {The function $\beta$ modulates the maximum rate of the regulation depending on the phenotype variable $x$. 
    The function $\rho$ modulates the basal amplitude negatively because of the inhibiting mechanism of $A_1$, under the assumption that a sufficient density of sympathetic axons is required to have an impact on the cancer progression process.
    Thus, the function $\rho$ vanishes for small values of $A_1$.}
    \item The basal amplitude is also regulated by the presence of sensory axons  that speed up the disease progression (cf. \cite{saloman2016ablation}). 
    {The function $\delta$ modulates the maximum rate of the regulation depending on the phenotype variable $x$.}
    \item Moreover, we assume that cancer cells in the pancreas are inducing an acceleration of the cancer progression of the healthy cells in the late stages of the disease. {This assumption is formalizing the crosstalk between PDAC cells and other cells where the tumorigenic environment facilitates the ability of cancer cells to survive and proliferate to the detriment of healthy cells (cf. \cite{ALEXANDER201680}).}
In order to model this response, we introduce the function $\eta$ which contributes to the cancer progression acceleration. This function takes as variables the ratio ${\NC(t)}/{N(t)}$ which quantifies the presence and the proportion of pathological cells in the system and the phenotype variable $x$. Also, we assume that $\eta$ is compactly supported and its support is located in a neighbourhood corresponding to the healthy cells on the phenotype axis.
\end{itemize}
An additional assumption in our model is that the cancer progression is non-reversible. For instance, once cells start to 
progress towards a cancerous state, they cannot recover, meaning that they cannot have an healthy phenotype state later in time. This implies that the transport term $f$ remains non-negative. 
\\

\noindent{\bf Growth term.} {In the model, the growth of the cancer cell densities is modelled  by a logistic-type growth which is given by the growth term $g$:}
\begin{equation}
g(t,x;{\mathcal X}):=r(x)Q(x)\left(1- \frac{N(t)}{\tau_C} - \mu_1 A_1(t)+\mu_2A_2(t)\right).
\label{eq:growth_term_pde}
\end{equation}

\noindent The function $r$ of the phenotype variable $x$ is the basal growth rate of the proliferating cells. 
Since {growth starts at pre-cancerous stages and accelerate around cancerous stages,} (cf. \cite{klein2002direct}), the support of the function $r$ is located in its neighbourhood on the phenotype axis. This growth process is regulated by the axons (cf. \cite{biankin2012pancreatic}). 
On the one hand, {the presence of the sympathetic axons inhibits or promotes the growth of cancer cells depending on the sign of $\mu_1$ which is the amplitude of the modulation (cf. \cite{renz2018beta2} and \cite{guillot2022sympathetic})
This mechanism is modelled by the term $-\mu_1 A_1(t)$, if $\mu_1>0$, then the sympathetic axons play an inhibitory role, whereas if $\mu_1<0$, the sympathetic axons play a promoting role on tumor growth. 
On the other hand, the presence of sensory axons amplifies the growth of cancer cells (cf. \cite{banh2020neurons})
, and this is modelled by the term $+\mu_2 A_2(t)$ where $\mu_2 \geq 0$.
Moreover, the saturation rate of the cells densities is linked to the parameters $\tau_C$ (the carrying capacities), $\mu_1$ and $\mu_2$.\\

\noindent{\bf Sympathetic axons growth dynamics.} 
One interesting dynamics for the sympathetic axons is that a small increase of the density of these axons is observed early in precancerous stage
of cancer progression process, whereas once cancer is established
the density of sympathetic axons is reduced (see the biological data in Figure \ref{fig:boxplot_syaxons} in Section \ref{sec:data}). 
Hence, in order to model the time evolution of the sympathetic axons, we use a logistic law with an Allee effect.
We denote by $\theta$ the function that enables the Allee effect.
The function $\theta$ takes as argument the ratio ${N(t)}/{N(0)}$ and it allows to change the dynamics of the sympathetic axons evolution during disease progression.
Starting from the healthy state, $N(t)=N(0)$ at least from small time $t$ i.e. the total concentration of cells remains constant, then taking $\theta(1)<A_1(0)$ and the amount of sympathetic axons increases. Moreover, $\theta$ is an increasing function such that if $N(t)>N(0)$ i.e. there are proliferative cells in the system, then at some point $\theta\left({N(t)}/{N(0)} \right)>1$ and the amount of sympathetic axons decreases. Thus the dynamics of the sympathetic axons are given by the following differential equation:
$$
\dt A_1(t) =  \underbrace{ r_{A_1} A_1(t)\left(\frac{A_1(t)}{\theta\left(\frac{N(t)}{N(0)}\right)}-1 \right)(1-A_1(t))}_{\text{Logistic law with Allee effect}},\\
$$
{where $r_{A_1}>0$ is the growth rate of sympathetic axons.}\\

\noindent{\bf Sensory axons growth dynamics.} {The remodeling of the sensory axons start at cancerous stages 
(see the biological data in Figure \ref{fig:boxplot_seaxons} in Section \ref{sec:data}). Hence, one natural way to model the time evolution of sensory axons is to use a logistic law with a growth rate which is modulated by the presence of cells with non-negative phenotype values. 
We denote $r_{A_2}$ the increasing function taking as argument ${\NC(t)}/{N(t)}$ which model the growth rate of sensory axons.
The dynamics of the sensory axons are given by the following differential equation: }
$$\dt A_2(t) = \underbrace{r_{A_2}\left(\frac{\NC(t)}{N(t)} \right)}_{\text{Impact of cancer cells on growth}} \underbrace{A_2(t)(1-A_2(t))}_\text{Logistic law}. $$
For instance, if there is no proliferative cells, then $$r_{A_2}\left(\frac{\NC(t)}{N(t)} =0 \right)=0$$ 
and the amount of sensory axons remains constant. As soon as ${\NC(t)}/{N(t)} >0 $ which implies $ r_{A_2}\left({\NC(t)}/{N(t)} \right)>0$ then the sensory axons follow the logistic law. Since we assume that $r_{A_2}$ is monotonous, then the more cancer cells there are in the system, the faster the growth of sensory axon is.\\

\noindent{\bf Complete dynamical system.} {The system that mathematically formalizes the impact of axons on tumor progression consists in a partial differential equation (PDE) for cell dynamics and two differential equations for axon dynamics. It couples a growth-transport equation for $Q$ with two ODEs with non-local terms for $A_1$ and $A_2$:}
\begin{equation}
\left\{
\begin{array}{c@{}l}
\partial_t Q(t,x)  &+ \partial_x \big[f(t,x;{\mathcal X})Q(t,x) \big] = g(t,x;{\mathcal X}),\, t>0,\, x\in \Omega\\
\\
\dt A_1(t) &=  r_{A_1} A_1(t)\left(\frac{A_1(t)}{\theta\left(\frac{N(t)}{N(0)}\right)}-1 \right)(1-A_1(t)),\, t>0,\\
\\
\dt A_2(t) &= r_{A_2}\left( \frac{\NC (t)}{N(t)}\right) A_2(t) \left(1-A_2(t)\right),\, t>0.
\end{array}
\right.
\label{eq:model_continuous}
\end{equation}

\noindent The system \eqref{eq:model_continuous} is completed with the following initial conditions:
$$Q(0,x)=Q_0(x)\text{ for } x\in \Omega, \quad A_1(0)\cong A_1^{eq}\quad \text{and}\quad A_2(0)=A_2^0,$$
where $A_1^{eq}>0$ corresponds to the average density of sympathetic axons in a healthy pancreas which has been normalized and $0 < A_2^0 \ll 1$ corresponds to the average density a sensory axons in a healthy pancreas also normalized. 
For modeling purposes, if there is no observations showing the presence of sensory axons in a healthy pancreas, we still consider that there is a negligible amount of sensory axons in the system at the initial state. Otherwise, the growth dynamics could not take place since $A_2(t)=0$ is an unstable steady-state.  
One point to note is that if we consider initially that the pancreas is essentially composed of healthy cells then one way to cope with this assumptions is to assume that $Q_0$ is compactly supported and that the following holds on the support of the initial datum $supp(Q_0)\subset \Omega /\mathbb{R}_+$.
Finally, we add the following boundary condition on the PDE
$$f(t,x;\X)Q(t,x) =0, x\in\partial \Omega,  $$
which ensures Dirichlet boundary conditions and allows us to neglect any processes acting outside of our domain. 
{This boundary condition is a strong assumption which is sufficient to prove the well-posedness of the model and the fact that no mass is lost on the boundary of the domain. It is ensured by the compact support of the transport term $f$ (cf. Hypothesis \ref{hyp:cond_advec_wellpo}).}
%=====================================================================
\subsection{Well-posedness of the model}\label{sec:wellposedness}
We denote by $\mathcal{X}\defeq (Q,A_1,A_2)$ the solution of the system \eqref{eq:model_continuous}.
{We note that $\mathcal{X}$ is solution of a non-conservative system that is a particular case of the following system :}
\begin{subequations}
\label{eq:model_abstract}
\begin{empheq}[left = {\empheqlbrace\,}]{align}
& \partial_t Q(t,x)  +\, f(t,x;\mathcal{X})\partial_x Q(t,x) + c(t,x;\mathcal{X})Q(t,x) = 0 &\quad \text{for } t>0,\, x\in \Omega,\label{eq:mod_abs_pde}\\
& f(t,x;\mathcal{X})Q(t,x) =  0 &\quad \text{for } x\in\partial\Omega, \,t\geq 0,\label{eq:mod_abs_bdd}\\
& Q(0,x) = Q_0(x) & \quad \text{for } x \in \Omega,\label{eq:mod_abs_Q0}\\
& \dt A_1(t)  =  r_{A_1} A_1(t)\left(\frac{A_1(t)}{\theta\left(\tfrac{N(t)}{N(0)}\right)}-1 \right)(1-A_1(t)) &\quad  \text{for } t > 0,\label{eq:mod_abs_a1} \\
& A_1(0) = A_1^0 \in (0,1), & \label{eq:mod_abs_a10}\\
& \dt A_2(t) = r_{A_2} \left(\frac{\NC (t)}{N(t)}\right) A_2(t) \left(1-A_2(t)\right) &\quad \text{for } t > 0,\label{eq:mod_abs_a2}\\
& A_2(0) = A_2^0 \in (0,1), & \label{eq:mod_abs_a20}
\end{empheq}
\end{subequations}
where $N(t) \defeq \int_\Omega Q(t,x)dx $ and $\NC (t) \defeq \int_\Omega \psi(x)Q(t,x)dx $ such that $\psi$ is given and nonnegative and {$\text{supp}(\psi)\subset \Omega\cap \mathbb{R}_+$ and that $\|\psi Q(t)\|_{\mathcal{L}^1(\Omega)}\leq \|Q(t)\|_{\mathcal{L}^1(\Omega)}$ for $t\geq 0$}. We also introduce the function $g$ which allows to rewrite the equation \eqref{eq:mod_abs_pde} in a conservative form :
\begin{equation}
g(t,x;\mathcal{X}) \defeq c(t,x;\mathcal{X}) - \partial_x f(t,x;\mathcal{X}),
\label{eq:model_abstract_g}
\end{equation}
implying that
$$ \partial_t Q(t,x)  +\, \partial_x \big(f(t,x;\mathcal{X}) Q(t,x)\big) + g(t,x;\mathcal{X})Q(t,x) = 0 \quad \text{for } t>0,\, x\in \Omega. $$
In the following, we {study the well-posedness of system \eqref{eq:model_abstract} and start with the different hypotheses required for that. 
}
\begin{hyp}[Initial condition]
Assume $Q_0 \in \mathcal{C}^1 (\Omega) $ is nonnegative such that {
$$0<C(N(0))<\int_\Omega Q_0(x) dx  = \| Q_0 \|_{\mathcal{L}^1(\Omega)}< C(\tau_C) <\infty $$
with $C(N(0))$ and $C(\tau_C)$ positive constants.}
\label{hyp:initial_cond_wellpo}
\end{hyp}
Hence, we denote $\mathcal{P}$ the following set:
\begin{equation}
\begin{split}
\mathcal{P}(T)\defeq &\bigg\{(Q, A_1, A_2)\in \mathcal{C}^1\left([0,T];\mathcal{C}^1(\Omega)\cap\mathcal{L}^1(\Omega)\right) \times \mathcal{C}^1([0,T]) \times \mathcal{C}^1([0,T]) \text{ such that }\\
& C(N(0))\leq \int_\Omega Q(t,x)dx\leq C(\tau_C) \text{ and } 0\leq A_i(t)\leq 1,\; i =1,2,\; t\in[0,T] \bigg\}.
\end{split}
\label{eq:admissible_set}
\end{equation}
We also assume the following conditions on the functions given by the transport term and the growth term. Given $\mathcal{X} \in \mathcal{P}(T), \text{ for } T>0$, we have the following hypotheses :
\begin{hyp}[Transport term]
Assume that $\mathcal{X}_i \in \mathcal{P}(T), \text{ for } T>0$ for $i=1,2$ then
$$f(\cdot ; \mathcal{X}) \in \mathcal{C}\left([0,T]; \mathcal{C}_c^1 (\Omega)\right) \text{ with }|\partial_x f(t,x ; \mathcal{X})|<\infty,\; f(t,x ; \mathcal{X})\geq 0$$
{$\text{for } t\in[0,T] \text{ and } x\in \Omega$, 
$$ f(t,x;\X) = \partial_x f(t,x;\X) = 0 \text{ for } t\in[0,T] \text{ and } x\in \partial\Omega$$
and there exist constants $C_l(f)>0$, $C_l(\partial_x f)>0$ such that  
$$\|f(t;\X_1) - f(t;\X_2) \|_{\mathcal{L}^\infty(\Omega)} \leq C_l(f) \|\X_1 -\X_2\|_\mathcal{P}\quad \text{and} \quad \|\partial_x (f(t;\X_1) - f(t;\X_2)) \|_{\mathcal{L}^\infty(\Omega)} \leq C_l(\partial_x f) \|\X_1 -\X_2\|_\mathcal{P}.$$}
\label{hyp:cond_advec_wellpo}
\end{hyp}
\begin{hyp}[Growth term]
{Assume that $\mathcal{X}_i \in \mathcal{P}(T), \text{ for } T>0$ for $i=1,2$ then
$$g(\cdot ; \mathcal{X})\in \mathcal{C}\left([0,T]; \mathcal{C}_c^1 (\Omega)\right) \text{ with }  \; |g(t,x;\X)|<\infty, \; |\partial_x g(t,x;\X)|<\infty  \text{ for } t\in[0,T] \text{ and } x\in \Omega$$
and there exists a constant $C_l(g)>0$ such that  
$$\|g(t;\X_1) - g(t;\X_2) \|_{\mathcal{L}^\infty(\Omega)} \leq C_l(g) \|\X_1 -\X_2\|_\mathcal{P}.$$
Moreover, assume that the growth term is a logistic-type growth term, i.e
$$-g(t,x;\X) \defeq r(x)\left(h\big(A_1(t),A_2(t)\big)-\int_\Omega Q(t,y)dy \right)$$
where there exist constants $0\leq r_{-} < r_{+} <\infty$ and $C(\tau_C)>0$ such that 
$$ r_{-} \leq r(x) \leq r_{+}\quad \text{ for } x\in\Omega$$
and there exists a small perturbation $0<\varepsilon$ such that 
$$C(\tau_C) - \varepsilon \leq h(x,y)\leq C(\tau_C)\quad  \text{ for } (x,y) \in [0,1]^2. $$ 
}
\label{hyp:cond_growth_wellpo}
\end{hyp}
\begin{hyp}[Coupling axons and cell densities]
Assume $\theta \in \mathcal{C}(\mathbb{R_+})$ and $r_{A_2} \in \mathcal{C}(\mathbb{R_+})$ such that $$ r_{A_2}(x) \geq 0, \quad \forall x\in \mathbb{R} $$
and that for any compact set $K\subsetneq \mathbb{R_+}$, there exist a constant $C_l(\theta)>0$ and a constant $C_l(r_{A_2})>0$ such that $\forall (x,y)\in K\times K$
$$|\theta(x) -\theta(y)|\leq C_l(\theta)|x-y| \quad \text{ and } \quad |r_{A_2}(x) -r_{A_2}(y)|\leq C_l(r_{A_2})|x-y|.$$
{Moreover, the image of $\theta$ satisfies
$$\text{Im}(\theta) \subset [\theta_{-}, \theta_{+}] \text{ with } \theta_{-} \in (0,A_1^0), \, \theta_{+}>1. $$
}
\label{hyp:cond_wellpo_theta_r}
\end{hyp}
{
\begin{remark}
The Hypothesis \ref{hyp:initial_cond_wellpo} allows to consider an initial state of healthy cells for the system. As for Hypotheses \ref{hyp:cond_advec_wellpo} and \ref{hyp:cond_growth_wellpo}, they give sufficient conditions on the tumor progression term (the transport term $f$) and on the proliferation term (the reaction term $c$) in order to ensure the well-posedness. The Hypothesis \ref{hyp:cond_wellpo_theta_r} states some regularity assumptions on the terms formalizing the effect of the cells on the axons densities as well as additional conditions in order to have a coupling term biologically relevant (cf. Section \ref{sec:model}).
\end{remark}
\begin{remark}
In our model, $$ f(t,x;\mathcal{X}) = \pi(x)  (1- \beta(x)\rho(A_1(t)) + \delta(x)A_2(t)) +\eta\left(x,\tfrac{\NC (t)}{N(t)}\right) $$
and
$$c(t,x;\mathcal{X}) =  \partial_x f(t,x;\mathcal{X}) - r(x)\left(1- \frac{N(t)}{\tau_C} - \mu_1 A_1(t)+\mu_2A_2(t)\right).$$
Hence the conditions on $f$ and $c$ stated in Hypotheses \ref{hyp:cond_advec_wellpo} and \ref{hyp:cond_growth_wellpo} are enforced when the following detailed assumptions are fulfilled : 
\begin{itemize}
    \item Let $\pi\in \mathcal{C}_c^1 (\Omega) $, $\beta \in \mathcal{C}^1 (\Omega)$, $\delta \in \mathcal{C}^1 (\Omega)$ and $\eta\left( \cdot, \tfrac{N_C(t)}{N(t)}\right) \in \mathcal{C}_c^1 (\Omega)$ such that 
$$\sup\limits_{x\in \Omega} |\pi'(x)| + |\beta'(x)| + |\delta'(x)| + |\eta'(x)| \leq C $$
for a constant $C>0$.
    \item Let $r\in \mathcal{C}_c^1(\Omega)$ and Lipschitz continuous. 
\end{itemize}
More details about these functions will be given in Section \ref{sec:precision}.
\end{remark}
}
\begin{theorem}\label{thm:well_po}
Assume that the Hypotheses \ref{hyp:initial_cond_wellpo}, \ref{hyp:cond_advec_wellpo}, \ref{hyp:cond_growth_wellpo} and \ref{hyp:cond_wellpo_theta_r} are satisfied, then the system \eqref{eq:model_abstract} admits a unique solution $$(Q,\; A_1,\; A_2) \in \mathcal{C}^1([0,T];\mathcal{L}^1(\Omega)) \times \mathcal{C}^1([0,T]) \times \mathcal{C}^1([0,T]), \text{ for all } T>0$$
such that $$\forall s\in [0,T] \quad  0\leq A_i(s) \leq 1, \; i=1,2, $$ 
and there exists a positive constant $C(\tau_C)>0$
$$\forall s\in [0,T] \quad \int_\Omega Q_0(x) dx \leq \int_\Omega Q(s,x) dx \leq C(\tau_C).  $$
\end{theorem}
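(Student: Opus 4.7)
The plan is to apply the Banach fixed-point theorem on the set $\mathcal{P}(T)$ defined in \eqref{eq:admissible_set}, equipped with the natural norm
$$\|\mathcal{X}\|_{\mathcal{P}(T)} \defeq \sup_{t \in [0,T]} \left( \|Q(t)\|_{\mathcal{L}^1(\Omega)} + |A_1(t)| + |A_2(t)| \right),$$
for a sufficiently small horizon $T$, and then to extend the local solution globally using the a priori bounds built into $\mathcal{P}$. Given a candidate $\bar{\mathcal{X}} = (\bar Q, \bar A_1, \bar A_2) \in \mathcal{P}(T)$, define $\mathcal{F}(\bar{\mathcal{X}}) \defeq (Q, A_1, A_2)$ where $A_1, A_2$ solve the decoupled ODEs \eqref{eq:mod_abs_a1}--\eqref{eq:mod_abs_a20} with $N$ and $\NC$ computed from $\bar Q$, and $Q$ solves the \emph{linear} transport--reaction equation \eqref{eq:mod_abs_pde} with frozen coefficients $f(\cdot;\bar{\mathcal{X}})$ and $c(\cdot;\bar{\mathcal{X}})$.

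First I would show stability, namely $\mathcal{F}(\mathcal{P}(T)) \subset \mathcal{P}(T)$. For the $Q$-component, Hypothesis \ref{hyp:cond_advec_wellpo} provides a Lipschitz, nonnegative velocity field vanishing on $\partial\Omega$, so the method of characteristics produces a $\mathcal{C}^1$ flow $s \mapsto X(s;t,x)$ that never exits $\Omega$, along which
$$\frac{\mathrm{d}}{\mathrm{d}s} Q(s, X(s)) = -c(s, X(s); \bar{\mathcal{X}})\, Q(s, X(s)),$$
preserving nonnegativity and yielding pointwise Gronwall bounds. Integrating the conservative form makes the boundary flux vanish by \eqref{eq:mod_abs_bdd}, and the logistic structure from Hypothesis \ref{hyp:cond_growth_wellpo}, with $h(A_1,A_2) \in [C(\tau_C) - \varepsilon,\, C(\tau_C)]$, forces $\int_\Omega Q(t,\cdot)$ to remain in $[C(N(0)), C(\tau_C)]$ on a short enough interval. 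For the axon components, a standard barrier argument applies: the right-hand sides of \eqref{eq:mod_abs_a1} and \eqref{eq:mod_abs_a2} vanish at $A_i = 0$ and $A_i = 1$, and Hypothesis \ref{hyp:cond_wellpo_theta_r} ensures $\theta(\bar N/N(0)) \in [\theta_-, \theta_+]$ with $\theta_- < A_1^0$, so $[0,1]$ is invariant for both $A_1$ and $A_2$.

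Next I would establish that $\mathcal{F}$ is a strict contraction on $\mathcal{P}(T)$ for $T$ small. Taking two inputs $\bar{\mathcal{X}}^{(1)}, \bar{\mathcal{X}}^{(2)}$ with images $(Q^{(i)}, A_1^{(i)}, A_2^{(i)})$, subtracting the two linear transport equations and integrating along characteristics, the Lipschitz estimates on $f$, $\partial_x f$ and $c$ from Hypotheses \ref{hyp:cond_advec_wellpo}--\ref{hyp:cond_growth_wellpo}, combined with a Gronwall argument, yield
$$\|Q^{(1)}(t) - Q^{(2)}(t)\|_{\mathcal{L}^1(\Omega)} \leq C \, t \, \|\bar{\mathcal{X}}^{(1)} - \bar{\mathcal{X}}^{(2)}\|_{\mathcal{P}(T)}.$$
For the axons, the local Lipschitz continuity of $\theta$ and $r_{A_2}$ on the compact range of admissible values (Hypothesis \ref{hyp:cond_wellpo_theta_r}), together with $|\bar N^{(1)}(t) - \bar N^{(2)}(t)| \leq \|\bar Q^{(1)}(t) - \bar Q^{(2)}(t)\|_{\mathcal{L}^1(\Omega)}$ and the analogous estimate for the weighted $\NC$-integral, gives a comparable bound on $A_i^{(1)} - A_i^{(2)}$ by Gronwall. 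Choosing $T$ small enough, Banach's theorem produces a unique local fixed point, which is the desired $\mathcal{C}^1$ solution on $[0,T]$. Since the two-sided $\mathcal{L}^1$ bound on $Q$ and the bounds $0 \leq A_i \leq 1$ are uniform in the horizon, iterating the construction on $[T, 2T], \ldots$ extends the solution globally.

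The hard part will be the contraction estimate for $Q$: because $f$ depends nonlocally on $\mathcal{X}$ through the ratio $\NC/N$, the two characteristic flows $X^{(1)}(\cdot)$ and $X^{(2)}(\cdot)$ are distinct, so comparing $Q^{(1)}$ and $Q^{(2)}$ requires a preliminary Gronwall step controlling $\sup_s|X^{(1)}(s) - X^{(2)}(s)|$ in terms of $\|\bar{\mathcal{X}}^{(1)} - \bar{\mathcal{X}}^{(2)}\|_{\mathcal{P}(T)}$ before one can close the $\mathcal{L}^1$ estimate on the solutions themselves. Some extra care is also needed to guarantee that $\int_\Omega Q(t,x)\,\mathrm{d}x$ stays bounded away from zero on the local interval, so that the nonlocal ratio $\NC/N$ remains well-defined and Lipschitz as a function of $\mathcal{X}$; this is precisely the role of the lower bound $C(N(0))$ in the definition of $\mathcal{P}(T)$ and of the smallness of $\varepsilon$ in Hypothesis \ref{hyp:cond_growth_wellpo}.
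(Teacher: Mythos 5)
Your proposal follows essentially the same route as the paper: a frozen-coefficient linear system defining a map on $\mathcal{P}(T)$, invariance and Lipschitz estimates from Hypotheses \ref{hyp:initial_cond_wellpo}--\ref{hyp:cond_wellpo_theta_r}, a strict contraction for small $T$ via Banach's theorem, and global extension by iterating with the uniform bounds $0\le A_i\le 1$ and $N(0)\le N(t)\le C(\tau_C)$. The only divergence is technical: for the $\mathcal{L}^1$-stability of the linear transport equation with respect to the coefficients you compare the two characteristic flows, whereas the paper works directly on the difference $v=u_1-u_2$ with a sign-function estimate (killing the term $\int_\Omega \mathrm{sign}(v)\,\partial_x(f_2 v)\,dx$) together with an a priori bound on $\|\partial_x u_1\|_{\mathcal{L}^1(\Omega)}$ -- both routes require the same Lipschitz data and yield the same conclusion.
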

\paragraph{Sketch of the proof :} The details of the proof of Theorem \ref{thm:well_po} are postponed in the Appendix \ref{sec:app_wellpo}. The system \eqref{eq:model_abstract} couples an transport--reaction partial differential equation (PDE) for the cancer progression with two differential equations for the axons densities. Axons dynamic is governed by non-local terms that depend on the solution of the PDE. Inversely, the dynamic of tumor progression modelled by the transport term and the reaction term is also governed by the axons. The proof of the well-posedness of the solution of this non-linear system relies on the contraction mapping theorem.\\
Consider $\X =(Q,A_1,A_2) \in \mathcal{C}^1([0,T];\mathcal{C}^1(\Omega)\cap\mathcal{L}^1(\Omega))\times \mathcal{C}^1([0,T])\times \mathcal{C}^1([0,T])$ given. {Define $N(s) \defeq \int_\Omega Q(s,x)dx $ and $\NC (s) \defeq \int_\Omega \psi(x)Q(s,x)dx $} such that $0 < N(0)\leq N(s) \leq C(\tau_C)$ for $ s\in [0,T]$. We introduce the linear system \eqref{eq:model_abstract}:
\begin{equation}
\left\lbrace
\begin{array}{c@{}l@{}l}
\partial_s u(s,x) & +\, f(s,x;\X)\partial_x u(s,x) + c(s,x;\X)u(s,x) = 0,\,&\, \text{for } s\in(0, T),\, x\in \Omega,\\
\\
f(s,x)u(s,x)& =  0,&\, \text{for } x \in \partial\Omega,\; s\in(0, T),\\
\\
u(0,x) &= Q_0(x), &\, \text{for } x \in \Omega,\\
\\
\ds \widetilde{A_1}(s) & =  r_{A_1} \widetilde{A_1}(s)\left(\frac{\widetilde{A_1}(s)}{\theta\left(\tfrac{N(s)}{N(0)}\right)}-1 \right)(1-\widetilde{A_1}(s)) , & \, \widetilde{A_1}(0) = A_1^0,\\
\\
\ds \widetilde{A_2}(s) & = r_{A_2} \left(\frac{N_c(s)}{N(s)}\right)\widetilde{A_2}(s) \left(1-\widetilde{A_2}(s)\right),& \, \widetilde{A_2}(0) = A_2^0.
\end{array}
\right.
\label{eq:model_abstract_linear}
\end{equation}
We denote 
$$\mathcal{B} \defeq \left\lbrace Q \in \mathcal{C}^1([0,T];\mathcal{C}^1(\Omega)\cap\mathcal{L}^1(\Omega))  \, | \, \int_\Omega Q_0(x)dx \leq \int_\Omega Q(s,x)dx \leq C(\tau_C) \, \text{for} \, s\in[0,T]  \right\rbrace$$ 
and $\mathcal{S}$ the following mapping :
\begin{equation}
\mathcal{S} :\, \mathcal{B}\times \mathcal{C}^1([0,T])\times \mathcal{C}^1([0,T]) \rightarrow \mathcal{B}\times \mathcal{C}^1([0,T])\times \mathcal{C}^1([0,T]), \quad (Q, A_1, A_2) \mapsto (u, \widetilde{A_1}, \widetilde{A_2}).
\label{eq:mapping_contraction}
\end{equation} 
Hence, we note that the solution of \eqref{eq:model_abstract} is a fixed point of $\mathcal{S}$.
The Lemmas \ref{lem:logistic_allee}, \ref{lem:logistic} and \ref{lem:wellpo_linear_pb}  in Appendix \ref{sec:app_wellpo} prove the well-posedness of the solution of \eqref{eq:model_abstract_linear} and give the estimations needed to prove the contraction property of the mapping which implies the existence and uniqueness of the solution of \eqref{eq:model_abstract}. 
\begin{remark}
The Theorem \ref{thm:well_po} state the well-posedness on any finite time interval of the solution of \eqref{eq:model_abstract}. However, one can prove the global well-posedness with the same arguments assuming additionnal regularity assumptions (on the time variable $t\in \mathbb{R}_+$ instead of $t\in[0,T]$) on the functions $\theta$ and $r_{A_2}$ (e.g. no singularities). The main reason allowing the extension of the result about the well-posedness comes from the fact that assuming the Hypothesis \ref{hyp:initial_cond_wellpo} holds and $0<A_i^0<1 $ for $i=1,2$ then the trajectories $(\int_\Omega Q(\cdot,x)dx, A_1(t),A_2(t))$ remain bounded away from $0$. However, finer stability estimates are required to conclude. Since this study focuses on the the interactions between cells and axons during the tumorigenesis for the pancreatic cancer, the transient behaviour (instead of the asymptotic behaviour) is the main point of interest. The result of Theorem \ref{thm:well_po} is sufficient to pursue this goal. 
\end{remark}

%=====================================================================
\subsection{Qualitative study of the system}\label{sec:qualitative_study}
{The aim of this section is to study how the transport term \eqref{eq:transport_term_pde} and the proliferation term \eqref{eq:growth_term_pde} of the system \eqref{eq:model_abstract} impact the final state of the model's dynamics.   
First, we show that, depending on the assumptions made about the functions $\pi$ (the basal speed of disease progression), $\eta$ (the speed of disease progression due to the presence of cancer cells), $r$ (the basal proliferation rate of cancer cells) and $Q_0$ (the initial distribution of cells with respect to their phenotypes), the different behaviours of the model's dynamics can be summed up in three categories.
Secondly,  we obtain explicit bounds on the first time of appearance of cells with cancerous phenotypes. This result is obtained under some assumptions on the transport term and relies on the characteristic curves of the equation. 

\subsubsection{Behaviour of the model}
{We recall that our phenotype domain $\Omega \subset \mathbb{R}$ can either be the whole real line or a segment centered in $0$. Hence, a distinct separation for the cells with positive and negative phenotypes is assumed. 
The origin separates the cancer-induced proliferative cells from the healthy
one. 
Moreover, at time $t=0$ the cells are supposed to be at a non-cancerous and thus at a non-proliferating state. This assumption is formally translated in our model by the following hypothesis on the location on the phenotype axis of the initial distribution $Q_0$ (i.e. its support) and on the support of the proliferation rate $r$ :} 
\begin{hyp}
Assume $Q_0$ is the initial condition of \eqref{eq:model_abstract} which fulfills Hypothesis \ref{hyp:initial_cond_wellpo}. Moreover, let $Q_0$ be compactly supported and $$\text{supp}(Q_0)\bigcap \text{supp}(r) = \emptyset.$$ 
\label{hyp:supp_Q0_r}
\end{hyp}
\noindent Hypothesis \ref{hyp:supp_Q0_r} implies that the population of cells in a healthy pancreas remain constant until the apparition of cancer cells.
Moreover, we also assume the following :
\begin{hyp}
Let 
$$\pi(x)\geq 0, \quad \eta(x) \geq 0, \quad \delta(x) \geq 0 \quad \text{and} \quad 1-\beta(x)\rho(y) \geq 0 \quad \forall x \in \Omega,\; \forall y \in [0,1]. $$
\label{hyp:pos_advec}
\end{hyp}
\noindent Hypothesis \ref{hyp:pos_advec} implies that the transport term $f(\cdot;\X)$ is nonnegative. It ensures the modeling assumptions stating that the phenotypic transfer is unidirectional: from healthy to cancer cells.
The immediate consequence of this assumption is that the behaviour of the system is mainly dictated by the location on the phenotype axis of the transport term support (i.e. the supports of $\pi$ and $\epsilon$).\\
 Once the Hypotheses \ref{hyp:supp_Q0_r} and \ref{hyp:pos_advec} are assumed, {the model dynamics can exhibit several types of behaviour that can be associated to one of the following states: }
\begin{itemize}
    \item a \textit{stationary state }where there is no cancer progression,
    \item a\textit{ bimodal state}, where the cancer progression takes place but the system converges towards a bimodal distribution (i.e. an asymptotic state in which healthy cells and cancer cells are present),
        \item a \textit{pathological state} where the healthy cells are totally replaced by cancer cells.
\end{itemize}
These three behaviours are determined by more refined  assumptions on the functions involved in the system \mm{\eqref{eq:model_abstract}}. The assumptions categorizing the dynamical behaviour are summarized in Figure \ref{fig:supports}.
}
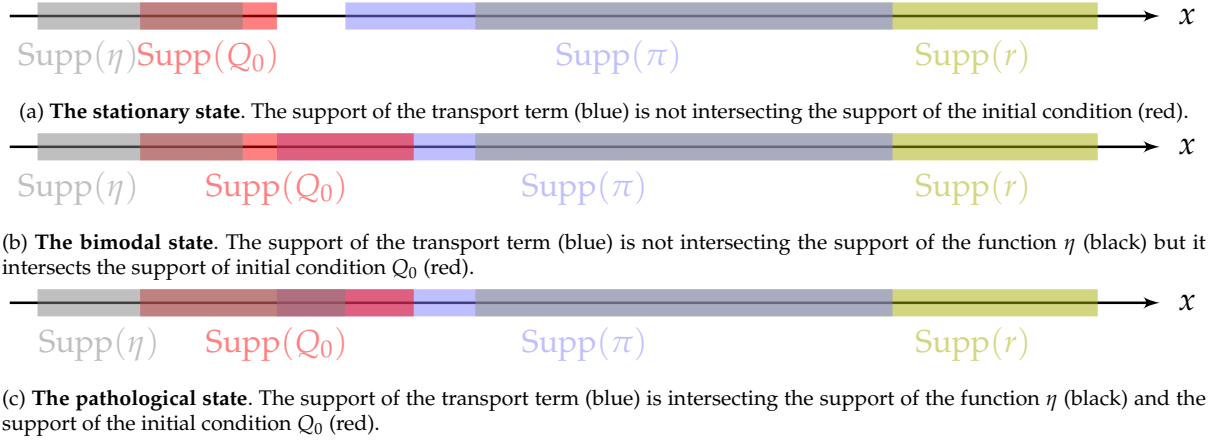
\begin{figure}[ht!]
\centering
     \begin{subfigure}[b]{0.995\textwidth}
         \centering
\scalebox{1.2}{\begin{tikzpicture}[scale=0.75,
                    fleche/.style={->, >=latex', shorten >=2pt, shorten <=2pt, thick},
                    doublefleche/.style={<->, >=latex', shorten >=2pt, shorten <=2pt, thick},
                    pi/.style={-,line width =3mm,blue!50,opacity=0.5},
                    QO/.style={-,line width =3mm, red,opacity=0.5},
                    eta/.style={line width =3mm, black!50,opacity=0.5},
                    r/.style={line width =3mm, couleur_r,opacity=0.5},
                    caract/.style={dashdotted, very thick},
                    ]

\draw[fleche] (-7,0)--(10,0) node[right] {$x$};

\draw[r] (-0.1,0.0)--(9,0.0) node[below,pos=0.8] {${\rm Supp} (r)$};
\draw[pi] (-2,0.0)--(6,0.0) node[below,midway] {${\rm Supp} (\pi)$};
\draw[QO] (-5,0)--(-3,0)node[below,midway] {${\rm Supp} (Q_0)$};
\draw[eta] (-6.5,0.0)--(-3.5,0.0)node[below,pos=0.2] {${\rm Supp} (\eta)$};

\end{tikzpicture}
}
    \caption{\textbf{The stationary state}. The support of the transport term (blue) is not intersecting the support of the initial condition (red).}\label{fig:schem_supp_stationary}
     \end{subfigure}
     \hfill
     \begin{subfigure}[b]{0.995\textwidth}
         \centering
         \scalebox{1.2}{\begin{tikzpicture}[scale=0.75,
                    fleche/.style={->, >=latex', shorten >=2pt, shorten <=2pt, thick},
                    doublefleche/.style={<->, >=latex', shorten >=2pt, shorten <=2pt, thick},
                    pi/.style={-,line width =3mm,blue!50,opacity=0.5},
                    QO/.style={-,line width =3mm, red,opacity=0.5},
                    eta/.style={line width =3mm, black!50,opacity=0.5},
                    r/.style={line width =3mm, couleur_r,opacity=0.5},
                    caract/.style={dashdotted, very thick},
                    ]

\draw[fleche] (-7,0)--(10,0) node[right] {$x$};

\draw[r] (-0.1,0.0)--(9,0.0) node[below,pos=0.8] {${\rm Supp} (r)$};
\draw[pi] (-3,0.0)--(6,0.0) node[below,midway] {${\rm Supp} (\pi)$};
\draw[QO] (-5,0)--(-1,0)node[below,midway] {${\rm Supp} (Q_0)$};
\draw[eta] (-6.5,0.0)--(-3.5,0.0)node[below,pos=0.2] {${\rm Supp} (\eta)$};

\end{tikzpicture}
}

    \caption{\textbf{The bimodal state}. The support of the transport term (blue) is not intersecting the support of the function $\eta$ (black) but it intersects the support of initial condition $Q_0$ (red).}\label{fig:schem_supp_bimodal}
    \end{subfigure}
    \hfill
    \begin{subfigure}[b]{0.995\textwidth}
         \centering
        \scalebox{1.2}{\begin{tikzpicture}[scale=0.75,
                    fleche/.style={->, >=latex', shorten >=2pt, shorten <=2pt, thick},
                    doublefleche/.style={<->, >=latex', shorten >=2pt, shorten <=2pt, thick},
                    pi/.style={-,line width =3mm,blue!50,opacity=0.5},
                    QO/.style={-,line width =3mm, red,opacity=0.5},
                    eta/.style={line width =3mm, black!50,opacity=0.5},
                    r/.style={line width =3mm, couleur_r,opacity=0.5},
                    caract/.style={dashdotted, very thick},
                    ]

\draw[fleche] (-7,0)--(10,0) node[right] {$x$};

\draw[r] (-0.1,0.0)--(9,0.0) node[below,pos=0.8] {${\rm Supp} (r)$};
\draw[pi] (-3,0.0)--(6,0.0) node[below,midway] {${\rm Supp} (\pi)$};
\draw[QO] (-5,0)--(-1,0)node[below,midway] {${\rm Supp} (Q_0)$};
\draw[eta] (-6.5,0.0)--(-2,0.0)node[below,pos=0.2] {${\rm Supp} (\eta)$};

\end{tikzpicture}
}
    \caption{\textbf{The pathological state}. The support of the transport term (blue) is intersecting the support of the function $\eta$ (black) and the support of the initial condition $Q_0$ (red).}\label{fig:schem_supp_pathological}
    \end{subfigure}
\caption{Scheme of the supports of the functions governing the dynamics of the cancer cells progression}\label{fig:supports}
\end{figure}

\begin{figure}[ht!]
     \centering
     \begin{subfigure}[b]{0.495\textwidth}
         \centering
         \includegraphics[width=\textwidth]{./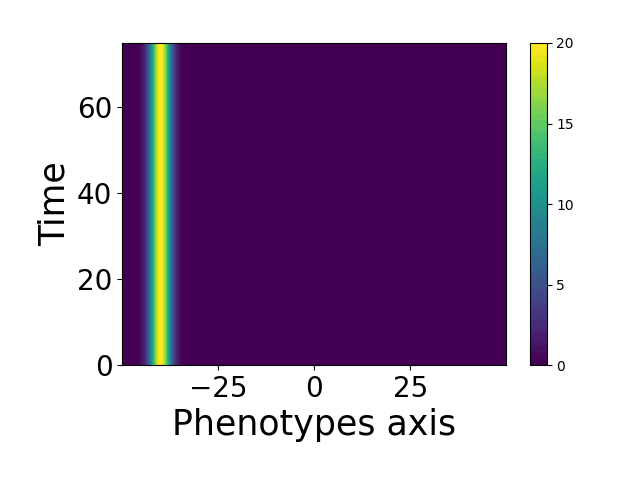}
         \caption{{\textbf{The stationary state.}} Evolution of the cell densities}
         \label{fig:heatmap_stationary}
     \end{subfigure}
     \hfill
     \begin{subfigure}[b]{0.495\textwidth}
         \centering
		 \includegraphics[width=\textwidth]{./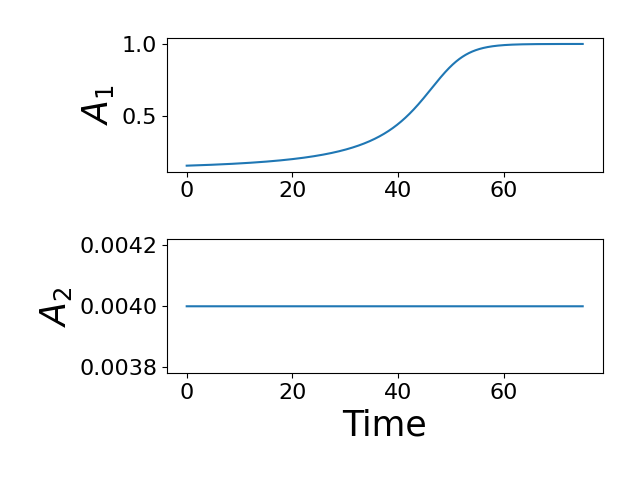}
         \caption{{\textbf{The stationary state.}} Evolution of the axons}
         \label{fig:axons_stationary}
     \end{subfigure}
     \centering
     \begin{subfigure}[b]{0.495\textwidth}
         \centering
         \includegraphics[width=\textwidth]{./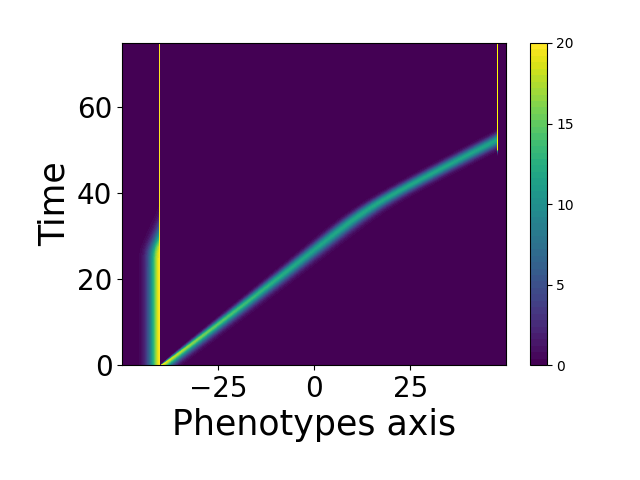}
         \caption{\textbf{The bimodal state. }Evolution of the cell densities}
         \label{fig:heatmap_bimodal}
     \end{subfigure}
     \hfill
     \begin{subfigure}[b]{0.495\textwidth}
         \centering
		 \includegraphics[width=\textwidth]{./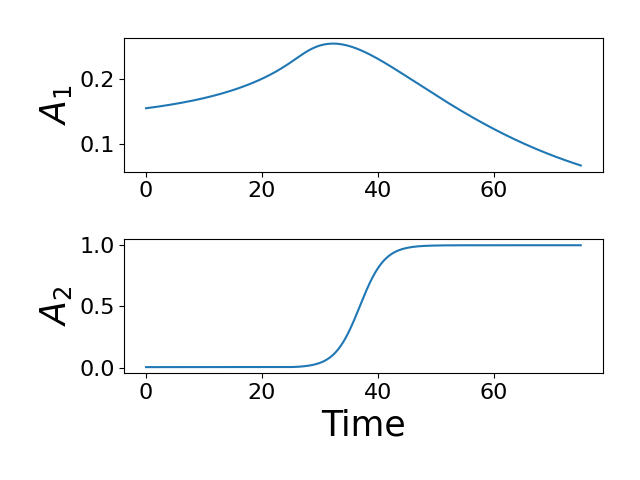}
         \caption{\textbf{The bimodal state. }Evolution of the axons}
         \label{fig:axons_bimodal}
     \end{subfigure}
     \centering
     \begin{subfigure}[b]{0.495\textwidth}
         \centering
         \includegraphics[width=\textwidth]{./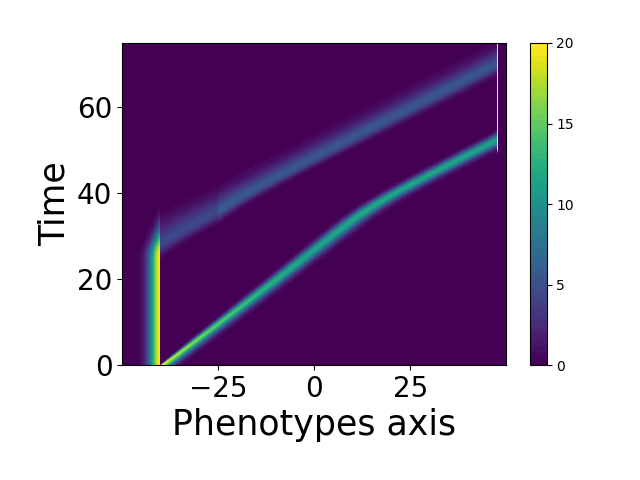}
         \caption{\textbf{The pathological state. }Evolution of the cell densities}
         \label{fig:heatmap_pathological}
     \end{subfigure}
     \hfill
     \begin{subfigure}[b]{0.495\textwidth}
         \centering
		 \includegraphics[width=\textwidth]{./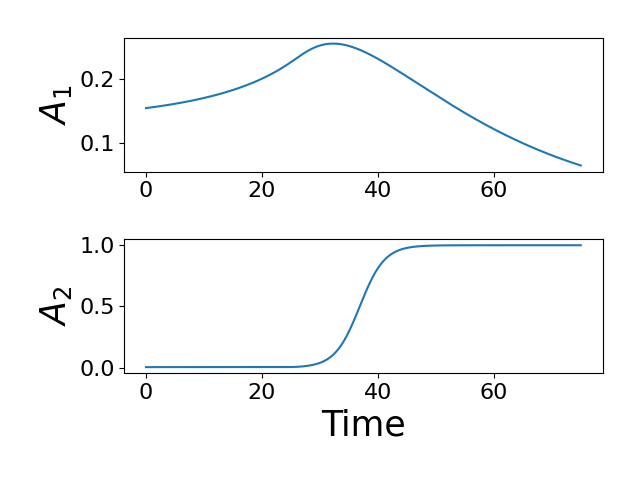}
         \caption{\textbf{The pathological state.} Evolution of the axons}
         \label{fig:axons_pathological}
     \end{subfigure}
     \caption{Numerical simulations of the system \eqref{eq:model_abstract} (cf. Appendix \ref{app:numerics} for the details of the numerics). The subfigures \ref{fig:heatmap_stationary}, \ref{fig:heatmap_bimodal} and \ref{fig:axons_pathological} show in the form of a heatmap the evolution of cell populations over time as a function of their phenotypes. The vertical axis is the time and the horizontal axis is the phenotype. The color map is an indicator of the cells densities. The subfigures \ref{fig:axons_stationary}, \ref{fig:axons_bimodal} and \ref{fig:axons_pathological} describe the evolution of the {normalized densities} of sympathetic axons denoted $A_1$ and sensory axons denoted $A_2$. }
    \label{fig:num_stationary}
\end{figure}

\paragraph{The stationary state.} The conditions ensuring this state are schematized in Figure \ref{fig:schem_supp_stationary}. 
Since there is no intersection between the support of the initial condition and the support of the transport term, there is no possible progression of the cancer. This behaviour corresponds to the dynamics of the model when there are only healthy cells without proliferative cells (cf. Figure \ref{fig:heatmap_stationary}). {Moreover, since there is no proliferative cells, the sensory axons density remains constant and the sympathetic axons density stays constant or follows a logistic growth (cf. Figure \ref{fig:axons_stationary}).} 

\paragraph{The bimodal state.} The conditions ensuring the bimodal state are schematized in Figure \ref{fig:schem_supp_bimodal}. 
The support of the initial density of cells on the phenotype axis is intersecting with the support of the functions implied in the progression towards the cancerous phenotype (the functions $\pi$ and $\eta$ in \eqref{eq:transport_term_pde}). {The function $\pi$ transports a proportion of the cell densities toward the cancer phenotype whereas the function $\eta$ starts accelerating the transport of the cell densities from the healthy state to the cancerous state only when cancer cells are present in the system. } 
Nevertheless, in the bimodal state, the supports of $\pi$ and $\eta$ are not intersecting each other. In that case, the system dynamics tend towards a bimodal population distribution.
The bimodal state corresponds to a state where there are cancer cells in the system after some time . However a proportion of non-cancerous cells is also present and persists through time (cf. Figures \ref{fig:heatmap_bimodal} and \ref{fig:scheme_bimodal}). 
Concerning the axons dynamics, in contrast to the stationary case, the sympathetic axons are first increasing thanks to the logistic growth and then decreasing thanks to the Allee effect (cf. Figure \ref{fig:axons_bimodal}) and the sensory axons are following a logistic growth (since there are cancer cells in the system).

\paragraph{The pathological state.} The asymptotic behaviour corresponding to the pathological case is the opposite of the one from the stationary case. After a finite time, the cancer cells are present in the system and a progression towards cancerous phenotypes occurs for the remaining healthy cells (cf. Figure \ref{fig:heatmap_pathological}). Then, after some time, there are only cancer cells in the system. 
If a proportion of healthy cells remains after the start of the cancer progression (i.e. $\text{supp}(Q_0)\cap \text{supp}(\pi)\neq \emptyset$ and $\text{supp}(\eta)/ \text{supp}(\pi)\neq \emptyset $), a  second transport phenomenon toward the cancer phenotypes takes place because the supports of the function $\pi $ and the function $ \eta $ are intersecting each other (cf. Figures \ref{fig:schem_supp_pathological}, \ref{fig:heatmap_pathological} and \ref{fig:scheme_pathological}). This means that in this state, the cancer cells play a more prominent role in the cancer progression. {Also, the cancer cell proliferation is controlled by the function $r$, its support on the phenotype axis and the various parameters involved in the saturation phenomena. In this case, since all cells are cancer cells after a finite time, the total amount of cancer cells is higher as compared to the one in the bimodal state (cf. Figures \ref{fig:heatmap_bimodal} and \ref{fig:heatmap_pathological})}.  However, the axons dynamics in the bimodal state and the pathological state are similar since in both state cancer cells are present (cf. Figure \ref{fig:axons_pathological}).  

\subsubsection{Explicit bounds on the time to appearance of the first cancer cells}
To obtain explicit bounds on the time to appearance of the first cancer cells, we state the following hypothesis which hold either on the bimodal case or the pathological case.
\begin{hyp}
Let  $$\text{supp}(Q_0)\bigcap \text{supp}(\pi) \neq \emptyset, \quad \text{supp}(r)\bigcap \text{supp}(\pi) \neq \emptyset$$
and $$\pi:K\subsetneq \big(\text{supp}(Q_0)\bigcap \text{supp}(\pi)\big)/\text{supp}(r) \rightarrow \mathbb{R}_+^*,\quad x \mapsto \pi(x)  $$ be monotonous and increasing.
\label{hyp:supp_Q0_pi}
\end{hyp}
Hypothesis \ref{hyp:supp_Q0_pi} is stating that at least a proportion of cells from the initial condition progress toward the cancerous phenotype and that the cells can reach a proliferative state. 
Moreover, it also state that the function $\pi$ involved in the transport term is positive and increasing in a specific subset of its definition domain. 
The second statement ensures that the progression toward cancer cells happens in finite time and that progression is faster when the location on the phenotype axis is towards the pre-cancerous and cancerous states than when it is towards the healthy state. Hence, we consider the time $t^*$, the time to appearance of the first cancer cells, with the following definition.
\begin{definition}\label{def:time_prolif}
We denote $t^*$ the first time when cells can proliferate, i.e. 
$$t^* \defeq \inf \left\lbrace  t\in \mathbb{R}_+^* \, | \, \min(\text{supp}(r)) \in \text{supp}(Q(t,\cdot)) \right\rbrace.$$
\end{definition}
\noindent Finally, we obtain an upper bound and a lower bound on the time $t^*$. 
The bounds depend on the assumptions made about the functions in the transport term ($\pi$, $\beta$, $\rho$, $\delta$ in \eqref{eq:transport_term_pde}), those in the growth term ($r$ in \eqref{eq:growth_term_pde}) and the initial condition $Q_0$ (cf. Figures \ref{fig:schem_supp_bimodal} or \ref{fig:schem_supp_pathological}). 
\begin{figure}[ht!]
\centering
     \begin{subfigure}[b]{0.495\textwidth}
         \centering
         \scalebox{0.75}{\begin{tikzpicture}[scale=0.75,
                    fleche/.style={->, >=latex', shorten >=2pt, shorten <=2pt, thick},
                    doublefleche/.style={<->, >=latex', shorten >=2pt, shorten <=2pt, thick},
                    pi/.style={-,line width =3mm,blue!50,opacity=0.5},
                    QO/.style={-,line width =3mm, red,opacity=0.5},
                    eta/.style={line width =3mm, black!50,opacity=0.5},
                    caract/.style={dashdotted, very thick,red},
                    ]
  \draw[red!20,fill=red!20] (-1,4)--(6,9)--(8,9)--(1,4);
\draw[red!20,fill=red!20] (-3,0)-- (-1,4)--(1,4)--(-1,0);

\draw[red!20,fill=red!20] (-5,4)-- (-3.5,4)--(-3.5,5.5);    
\draw[red!20,fill=red!20] (-5,4)-- (-3.5,4) -- (-3.5,0)--(-5,0);
\draw[red!20,fill=red!20] (-3.5,0.1)--(-3,0.1)--(-3,9)--(-3.5,9);

\draw[fleche] (-7,0)--(7,0) node[below] {$x$};
\draw[fleche] (1,-0.1)--(1,10) node[above] {$t$};
\draw[dotted] (-7,4)--(7,4) node[above, pos=0.52] {$t^*$};

\draw[caract] (-5,0)--(-5,4);
\draw[caract] (-3.5,0)--(-3.5,4);
\draw[red, ultra thick] (-3.5,4)--(-3.5,9);
\draw[caract] (-4.25,0)--(-4.25,4);
\draw[caract] (-3,0)--(-3,9);

\draw[caract,red] (-3,0) -- (-1,4);
\draw[caract,red] (-1,4) -- (6,9);
\draw[caract,red] (1,4) -- (8,9);
\draw[caract,red] (-1,0) -- (1,4);

\draw[caract,red] (-5,4)-- (-3.5,5.5);
\draw[caract,red] (-4.25,4) -- (-3.5,4.8);

\draw[pi] (-3,0.0)--(6.7,0.0) node[below,midway] {${\rm Supp} (\pi)$};

\draw[QO] (-5,0)--(-1,0)node[below,midway] {${\rm Supp} (Q_0)$};
\draw[eta] (-6.5,0.0)--(-3.5,0.0)node[below,pos=0.2] {${\rm Supp} (\eta)$};
\draw[fleche] (-0.1,-1.5)--(-1,0) node [below,pos=-0.1]{$M_0$};
\node[draw,red,rotate=65] at (-0.75,2.4)  {${\rm Supp} (Q(t,.))$};
\end{tikzpicture}}
         \caption{Bimodal state}
         \label{fig:scheme_bimodal}
     \end{subfigure}
     \hfill
     \begin{subfigure}[b]{0.495\textwidth}
         \centering
		 \scalebox{0.75}{\begin{tikzpicture}[scale=0.75,
                    fleche/.style={->, >=latex', shorten >=2pt, shorten <=2pt, thick},
                    doublefleche/.style={<->, >=latex', shorten >=2pt, shorten <=2pt, thick},
                    pi/.style={-,line width =3mm,blue!50,opacity=0.5},
                    QO/.style={-,line width =3mm, red,opacity=0.5},
                    eta/.style={line width =3mm, black!50,opacity=0.5},
                    caract/.style={dashdotted, very thick,red},
                    ]
  \draw[red!20,fill=red!20] (-1,4)--(6,9)--(8,9)--(1,4);
\draw[red!20,fill=red!20] (-3,0)-- (-1,4)--(1,4)--(-1,0);

\draw[red!20,fill=red!20] (-5,4)-- (-3,4)--(4,9)--(2.5,9);                 
 \draw[red!20,fill=red!20] (-5,4)-- (-3,4) --(-3,0)--(-5,0);
 
\draw[fleche] (-7,0)--(7,0) node[below] {$x$};
\draw[fleche] (1,-0.1)--(1,10) node[above] {$t$};
\draw[dotted] (-7,4)--(7,4) node[above, pos=0.52] {$t^*$};

\draw[caract] (-5,0)--(-5,4);
\draw[caract] (-3,0)--(-3,4);

\draw[caract,red] (-3,0) -- (-1,4);
\draw[caract,red] (-1,4) -- (6,9);
\draw[caract,red] (1,4) -- (8,9);
\draw[caract,red] (-1,0) -- (1,4);

\draw[caract,red] (-3,4) -- (4,9);
\draw[caract,red] (-5,4)-- (2.5,9);

\draw[pi] (-3,0.0)--(6.7,0.0) node[below,midway] {${\rm Supp} (\pi)$};

\draw[QO] (-5,0)--(-1,0)node[below,midway] {${\rm Supp} (Q_0)$};
\draw[eta] (-6.5,0.0)--(-2,0.0)node[below,pos=0.2] {${\rm Supp} (\eta)$};
\draw[fleche] (-0.1,-1.5)--(-1,0) node [below,pos=-0.1]{$M_0$};
\node[draw,red,rotate=65] at (-0.75,2.4)  {${\rm Supp} (Q(t,.))$};

\end{tikzpicture}
}
         \caption{Pathological state}
         \label{fig:scheme_pathological}
     \end{subfigure}
     \caption{Scheme of the characteristic curves and the supports of the functions $\eta$, $Q_0$ and $\pi$ for the system \eqref{eq:model_continuous} under the assumptions of Hypothesis \ref{hyp:supp_Q0_pi}. The red dashed lines are the characteristic curves. The red region is the region where the density of cells is non-zero. The thick red line is a dirac mass which contains part of the densities of the system (the characteristic curves are equal to 0). The time $t^*$ is the time of appearance of the first cancer cells (cf. Definition \ref{def:time_prolif}). The slope of the characteristic curves is changing at time $t^*$ because of the effect of the sensory axons and the effect of the cancer cells (through the function $\eta$) on the cancer progression.}\label{fig:scheme_charact}
\end{figure}
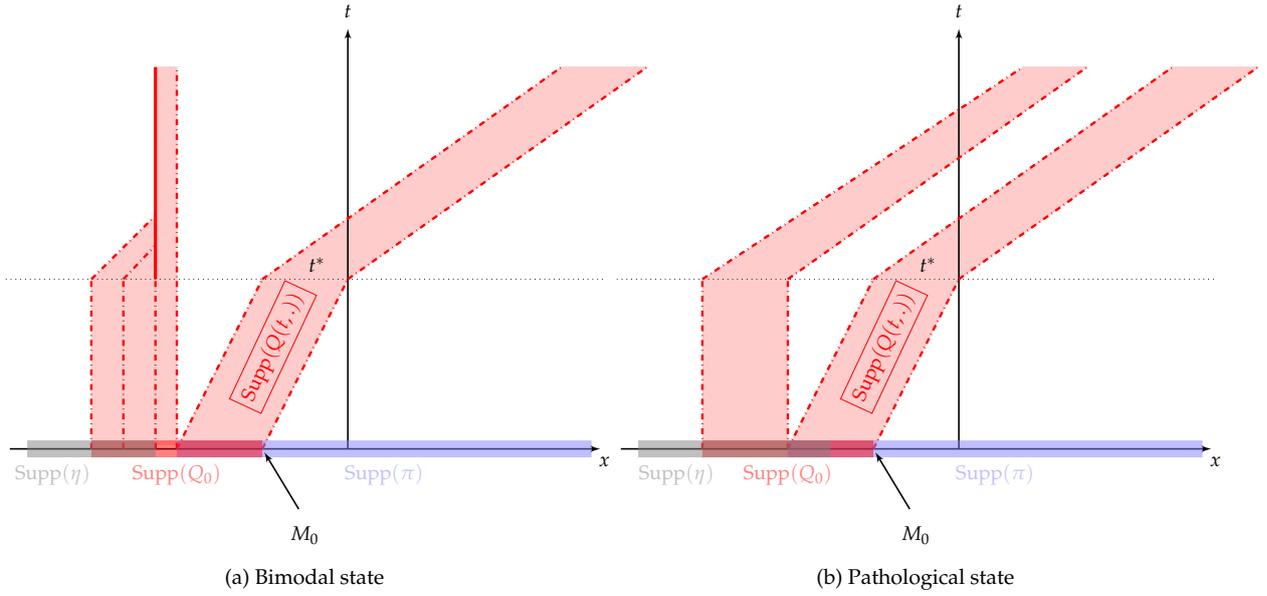

\begin{prop}[Appearance of the proliferative cells]\label{prop:bnd_time_prolif}
Assume Hypotheses \ref{hyp:supp_Q0_r}, \ref{hyp:pos_advec} and \ref{hyp:supp_Q0_pi} hold. We denote (cf. Figure \ref{fig:scheme_charact})
$$M_0 = \max\left( \text{supp}(Q_0)\bigcap \text{supp}(\pi) \right)\quad  \text{and} \quad r_0 =\min(\text{supp}(r)).$$ 
Then there exist two positive constants $0< m_1(\pi(M_0),\beta,\rho) < m_2(\pi, \delta)$ such that
$$ t^* \in \left[\frac{r_0-M_0}{m_2}; \frac{r_0-M_0}{m_1}\right].$$
\end{prop}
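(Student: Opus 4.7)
The plan is to apply the method of characteristics to the transport equation in \eqref{eq:model_abstract}. Let $X(\cdot\,; M_0)$ denote the characteristic curve starting at $M_0$, i.e.\ the solution of
\begin{equation*}
\frac{d}{dt} X(t; M_0) = f\bigl(t, X(t; M_0); \mathcal{X}(t)\bigr), \qquad X(0; M_0) = M_0,
\end{equation*}
which is well-defined thanks to the regularity of $f$ granted by Hypothesis \ref{hyp:cond_advec_wellpo}. Since $f \geq 0$ by Hypothesis \ref{hyp:pos_advec}, every characteristic is non-decreasing in time; the non-crossing property of characteristics together with the monotonicity of $\pi$ on $K$ (Hypothesis \ref{hyp:supp_Q0_pi}) imply that $X(\cdot\,; M_0)$ is the fastest characteristic issued from $\text{supp}(Q_0)$ and therefore the first to reach $r_0$. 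Combining this with Definition \ref{def:time_prolif} gives $X(t^*; M_0) = r_0$, which yields the integral identity
\begin{equation*}
r_0 - M_0 \;=\; \int_0^{t^*} f\bigl(s, X(s; M_0); \mathcal{X}(s)\bigr)\,ds.
\end{equation*}

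The next step is to bound the integrand pointwise along the trajectory. I would first argue that $\eta$ contributes nothing along the characteristic: for $s \in [0, t^*]$ the curve stays in $[M_0, r_0]$, which lies outside $\text{supp}(\eta)$ by the geometric arrangement depicted in Figure \ref{fig:scheme_charact}, so $\eta\bigl(X(s; M_0), \cdot\bigr) = 0$. Using further that $0 \leq A_1(s), A_2(s) \leq 1$ by Theorem \ref{thm:well_po}, that $0 \leq \beta(x)\rho(A_1) \leq 1$ by Hypothesis \ref{hyp:pos_advec}, and that $\pi(X(s; M_0)) \geq \pi(M_0) > 0$ by the monotonicity of $\pi$, one obtains the pointwise sandwich
\begin{equation*}
\pi(M_0)\bigl(1 - \|\beta\|_\infty \|\rho\|_\infty\bigr) \;\leq\; f\bigl(s, X(s; M_0); \mathcal{X}\bigr) \;\leq\; \max_{x \in [M_0, r_0]} \pi(x)\bigl(1 + \delta(x)\bigr).
\end{equation*}
Setting $m_1 := \pi(M_0)(1 - \|\beta\|_\infty \|\rho\|_\infty)$ and $m_2 := \max_{x \in [M_0, r_0]} \pi(x)(1 + \delta(x))$, integration of the identity above over $[0, t^*]$ gives $m_1 t^* \leq r_0 - M_0 \leq m_2 t^*$, which rearranges to the stated interval.

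The main obstacle is the geometric claim that $\eta$ vanishes along the characteristic, which requires careful bookkeeping of the supports of $\pi$, $\eta$, $Q_0$, and $r$: either $\text{supp}(\eta)$ lies strictly to the left of $M_0$ (as in the bimodal configuration of Figure \ref{fig:scheme_charact}), or one uses that for $s < t^*$ no cell has yet reached the proliferation zone, so the ratio $N_C(s)/N(s)$ remains in the range where the second argument of $\eta$ is inactive. A secondary concern is verifying the strict positivity $m_1 > 0$ and the ordering $m_1 < m_2$; both follow from the mild quantitative assumption $\|\beta\|_\infty \|\rho\|_\infty < 1$ (consistent with, but strictly stronger than, the pointwise constraint $1 - \beta\rho \geq 0$ in Hypothesis \ref{hyp:pos_advec}), together with the elementary comparison of the two maxima. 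Apart from these points, the argument is a direct integration along the characteristic ODE, and it does not require any finer information on the dynamics of $A_1$ or $A_2$.
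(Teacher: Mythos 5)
Your proof follows essentially the same route as the paper's: both track the characteristic emanating from $M_0$, use $N_C(s)=0$ for $s<t^*$ to kill the $\eta$ contribution, sandwich the speed along that characteristic between $m_1=\pi(M_0)\bigl(1-\max\beta\,\rho(1)\bigr)$ and $m_2=\max\pi\,(1+\max\delta)$, and integrate. The only cosmetic differences are that you phrase the identity as $r_0-M_0=\int_0^{t^*}f$ rather than the paper's affine bounds $M_0+m_1t\le X(t,0,M_0)\le M_0+m_2t$, and you take the maximum defining $m_2$ over $[M_0,r_0]$ instead of over all of $\Omega$; neither changes the argument.
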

\begin{proof}[Proposition \ref{prop:bnd_time_prolif}]
Let $t\in (0,t^*)$, then the following equation holds for $Q$:
$$
\partial_t Q(t,x) + \partial_x \left(f(t,x)Q(t,x) \right) = 0,
$$
since $\text{supp}(Q_0)\bigcap \text{supp}(r) = \emptyset.$ It implies that 
$$Q(t,x) = Q_0(X(0,t,x)) \times \exp\left(-\int_0^t \partial_x f \big|_{(t,x)=(s,X(s,t,x))} ds \right)$$
for $t\in (0,t^*)$ and $x\in \Omega$ where $X$ denote the characteristic curves, i.e. 
\begin{equation*}
    \left\{
    \begin{array}{cl}
       \ds X(s,t,x)  & = f(s,X(s,t,x)), \\
         X(t,t,x) & = x. 
    \end{array}
    \right.
\end{equation*}
Then, assuming $t\in (0,t^*)$, we have the following 
$$\text{supp}(Q(t,\cdot)) = \text{supp}(Q_0(X(0,t,\cdot)). $$
Hence, the bounds on $t^*$ can be found by studying the characteristics. 
Since Hypothesis \ref{hyp:pos_advec} hold, then there exist two functions $f_1:\Omega \rightarrow \mathbb{R}_+^*  $ and $f_2:\Omega \rightarrow \mathbb{R}_+^*$ 
with 
\begin{equation*}
\begin{split}
    f_1(y)  &= \pi(y) (1-\beta(y)\rho(1)),\\
    f_2(y) &= \pi(y)(1+\delta(y))
\end{split}
\end{equation*}
such that 
$$ f_1(X(s,t,x) )\leq \ds X(s,t,x) \leq f_2(X(s,t,x)).$$
In order to estimate the bounds on $t^*$, we focus on the characteristic starting at $M_0 = \max(\text{supp}(Q_0)\bigcap \text{supp}(\pi))$ at time 0 (cf. Figure \ref{fig:scheme_charact}). 
Since $f(t,x)\geq 0$ for $t\geq 0$ and $x\in\Omega$ then $$X(t,0,M_0)\geq M_0,\quad \text{for } t\geq 0.$$
On the first hand, there exists $\varepsilon >0$ arbitrary small such that $\pi'(x)> 0$ for $x\in[M_0-\varepsilon, M_0+\varepsilon] \subset \big(\text{supp}(Q_0)\bigcap \text{supp}(\pi)\big)/\text{supp}(r)$ and $\max\limits_{x\in \Omega} \beta(x)\rho(1)<1$, we have that 
\begin{equation}
    0<\pi(M_0)(1-\max\limits_{x\in \Omega} \beta(x) \rho(1)) \leq f_1(X(t, 0, M_0), \quad \text{for } t\in(0,t^*).
    \label{eq:bdd_f1}
\end{equation}
On the other hand, since $t\in (0,t^*)$ then $\NC(t) = 0$, we have that 
\begin{equation}
     f_2(X(t, 0, M_0)\leq \max\limits_{x\in \Omega}\pi(x)(1+\max\limits_{x\in \Omega} \delta(x)), \quad \text{for } t\in(0,t^*).
    \label{eq:bdd_f2}
\end{equation}
We denote $m_1 = \pi(M_0)(1-\max\limits_{x\in \Omega} \beta(x) \rho(1))$ and $m_2 = \max\limits_{x\in \Omega}\pi(x)(1+\max\limits_{x\in \Omega} \delta(x))$, using the bounds in \eqref{eq:bdd_f1} and \eqref{eq:bdd_f2}, we obtain 
$$M_0 + m_1 t \leq X(t,0, M_0) \leq M_0 + m_2 t .$$
Finally, we obtain a bound for $t^*$ using the estimates on the characteristic starting at $M_0$ :
$$ \frac{r_0-M_0}{m_2} \leq t^* \leq \frac{r_0-M_0}{m_1}.$$
\end{proof}

{The dynamics of the mathematical model illustrate the different states of pancreatic cancer tumorigenesis.
Moreover,
depending on the information available on the pro or anti-tumoral interactions between the axons and the cells, 
our model can be adapted to estimate the bounds on the time of appearance of the first cancer cells (Proposition \ref{prop:bnd_time_prolif}).
}

%=====================================================================

\section{Parametrization and modelling of the denervation}
\label{sec:param}
In order to confront our model to the data and be able to extract biological information from it, we need to parametrize the dynamical system and introduce degrees of freedom. The parametrization given in Table~\ref{tab_parameters} is described in Section \ref{sec:precision}. To understand the regulation coming from the nervous system, we also need to introduce a mathematical model of the denervation treatment. This is done in Section~\ref{sec:parden}.

\subsection{Details of the model and its parameters}\label{sec:precision}

\begin{table}[ht]
\scalebox{0.9}{
\begin{tabular}{|c|c|c|c|c|}
\hline
&Name &  Range/Value & Units& Description\\
\hline
\multirow{3}{*}{Discretization} & $T$ & 70 & day & life time of a mouse\\
& $L$ & 50 & - & phenotype domain \\
&$dt$&$\min\left(\frac{0.9dx}{\pi_0(1+\delta)},0.005\right)$&day&time step\\
&$dx$&0.05&-&space step\\
\hline
\multirow{2}{*}{Initial condition}&$\overline{ Q}_0$&100&{cells/mm$^3$}&amount of healthy cells at time 0 days\\
&$A_1(0)$&0.15445&-&amount of sympathetic axons at time 0 days\\
&$A_2(0)$&0.004&-& amount of sensory axons at time 0 days\\
\hline
\multirow{7}{*}{ Speed of progression}&
$\pi_0$&$[0.1,10]$&day$^{-1}$&Basal amplitude of the speed\\
&$x_{1,\pi}$&$[30,49]$&-&phenotype at which progression starts\\
&$\epsilon_{1,\pi}$&$[10^{-4},10]$&-&steepness of the switch to progression\\
&$x_{2,\pi}$&10&-&phenotype at which progression stops \\
&$\epsilon_{2,\pi}$&10&-&steepness of the switch to no progression\\
&$\beta$&$[10^{-4},1]$&-&modulation by the sympathetic axons\\
&$\delta$&$[10^{-4},1]$&-&modulation by the sensory axons\\
\hline
\multirow{5}{*}{ Tumor growth}
&$\gamma_r$&$[10^{-2},10]$&day$^{-1}$&maximum proliferation rate\\
&$s_r$&$[0.2,10]$&-&rate of proliferation\\
&$\tau_c$&$[120,300]$&{cells/mm$^3$}& carrying capacity of total concentration of cells\\
&$\mu_1$&$[-1,1]$&-&modulation by the sympathetic axons\\
&$\mu_2$&$[10^{-4},1]$&-&modulation by the sensory axons\\
\hline
\multirow{5}{*}{ Axon growth}&$A_1^{eq}$&0.1544&-&Fixed from the data\\
& $r_{A_1}$&$[10^{-4},5]$&day$^{-1}$&Growth rate of the sympathetic axons\\
& $s_\theta$&$[13,20]$&-&steepness of decrease\\
&$\bar r_{A_2}$&$[10^{-4},5]$&day$^{-1}$&Growth rate of the sensory axons\\
&$ s_{A_2}$&$[10^{-4},10]$&-&steepneess of increase\\
\hline
\end{tabular}
}
\caption{Details on the parameters involved in the model.}\label{tab_parameters}
\end{table}

\noindent \paragraph{Choice of the domain.} 
{We propose to use  functions $\pi$ and $Q_0$ compactly supported so that we can reduce the study of the transport equation to 
a finite domain $\Omega = (-L,L)$ (cf. Section \ref{sec:qualitative_study}).  We are then able to approximate the solution of the model using an upwind  finite volume scheme for  the transport equation and  explicit Euler scheme for the ODE (cf. Appendix \ref{app:numerics}).
In the following, we fix $L=50$. We use a constant time step $dt$ and space step $h$ ensuring CFL conditions for the upwind scheme.}

\noindent \paragraph{Initial Conditions.} 
 We set $Q_0(x)$ to be the concentration of cells at time $t=0$ days. Since initially, only healthy acinar cells are observed, and since negative values of $x$ close to $50$ correspond to healthy cells, we then suppose $Q_0(x)$ follows a gaussian distribution centered at $-x_{init}$ defined by the following : 
$$ Q_0(x) = \overline{ Q}_0\frac{\exp{\{\frac{-(x+x_{init})^2}{4}} \}}{\sqrt{4\pi}}  $$
with $\overline{ Q}_0=100$, and $x_{init}= 40 $.\\
{We note that $N(0)$ is close to $ \overline{ Q}_0$.
Also, the choices of $\overline{ Q}_0$, $L$ and $x_{init}$ are arbitrary and impact the values of many of the parameters.}\\
\noindent As for the axons, at time $t=0$ days, a negligible amount of sensory axons, and a small amount of autonomic axons are present. Moreover, the axon densities in the mathematical model \eqref{eq:model_continuous} are renormalized and unitless. Based on the biological data provided, we consider 
$$A_1(0) = 0.15445,\quad A_2(0) = 0.004 .$$

\paragraph{Cancer progression.} The experimental data that we have is not quantified on the impact that cancer cells have on cancer progression of the healthy cells in the late stages of the disease.  Therefore, we impose in what follows $\eta=0$ such that 
    $$
f(t,x;{\mathcal X})=\pi(x)(1-\beta\rho(A_1)+\delta A_2).$$
This implies that transport to cancer phenotypes is no longer affected by the presence of cancer cells at a very advanced stage of the disease.
The advantage of this modeling approach is that it reduces the number of parameters without significantly impacting the system's transient dynamics.
\begin{figure}[ht!]
	\begin{center}
     	\begin{tikzpicture}[scale=0.9]
			\draw[->] (-6,0) -- (6,0) node[right] {};
    		\draw[->] (0,-0.1) -- (0,2) node[above] {\tiny $\pi(x)$};
			\draw[thick,variable=\t,samples=500,color=red, domain=-6:6]
  					plot ({\t },{((tanh(\t +  5-2)+tanh(-1*\t + 5- 3)))/2}) ;
 			\draw (6,0) node[above] {\tiny $x$};
			\draw (0,1.1) node[right] {\tiny $\pi_0$};
			\draw[dotted] (-3,0)--(-3,0.5);
			\draw (-3,0) node[below] {\tiny $-(L-x_{1,\pi})$};
			\draw (-5,0) node[below] {\tiny $-L$};
			\draw[dotted] (-3,0.5)--(2,0.5);
			\draw[dotted] (2,0)--(2,0.5);
			\draw (5,0) node[below] {\tiny $L$};
			\draw (2,0) node[below] {\tiny $L-x_{2,\pi}$};
			\draw (0,0.5) node[right] {\tiny $\frac{\pi_0}{2}$};
  		\end{tikzpicture}
	\end{center}
     \caption{Illustration of the basal amplitude for the speed of 					disease progression $\pi$.}
     \label{fig:pi_0}
\end{figure}
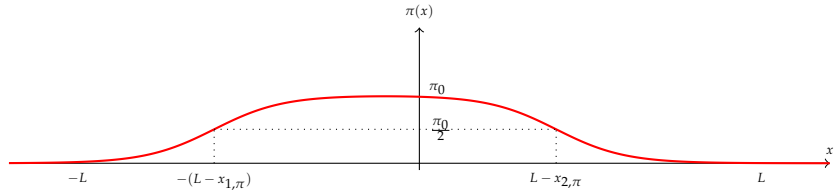
\begin{itemize}
    \item The function $\pi$ represents a basal amplitude for the speed of disease progression:
    $$\pi(x)=\pi_0 \left(\tanh(\epsilon_{1,\pi}(x+L-x_{1,\pi}))+\tanh(\epsilon_{2,\pi}(-x+L-x_{2,\pi}))\right)/2.
$$
Since initially the transdifferentiation of healthy cells is considered to be slow, we require the transport to be almost negligible around the lower bound of the phenotype domain. At some phenotype $-L+x_{1,\pi}\in (-L,0)$, we expect to see an increase in the transport speed. 
A constant amplitude is then observed during the PanIN/PDAC phase. We impose the assumption that after $x=L-x_{2,\pi}$ we have decreasing transport speed in amplitude which vanishes in a neighborhood of the upper bound of the domain. This implies that no more transport takes place since cells have reached their PDAC phenotype for $x\geq L-x_{2,\pi}$ (cf. Figure \ref{fig:pi_0}).
\begin{itemize}
    \item Initially the amount of sensory axons is almost negligible and increases only after the presence of PDAC cancer cells. We then expect the speed before the presence of cancer cells to be 
$$f(t,x;{\mathcal X})\cong \pi_0(1-\beta\rho(A_1)). $$ 
Since the cancer cells are seen around $35$ days, {the position of the initial distribution on the phenotype axis provides information on the approximate value of the parameters linked to cancer progression and the following can be considered }
    $$\pi_0(1-\beta\rho(A_1)) \cong  \frac{x_{init}}{35}.$$
But as $\rho(A_1)<1$, we then have $$ \pi_0 (1-\beta)\leq \pi_0(1-\beta\rho(A_1)) \leq \pi_0 $$
which gives, 
$$\pi_0 \in \left[\frac{x_{init}}{35}, \frac{x_{init}}{35(1-\beta)}\right].$$
	\item The parameter $x_{1,\pi}$ is closely related to the location on the phenotype axis where the cancer progression starts. We note that based on Section \ref{sec:qualitative_study}, we would like to secure that $\text{supp}(Q_0)\bigcap \text{supp}(\pi) \neq \emptyset$ so that tumorigenesis takes place. Since $Q_0$ is constructed such that $Q_0(x) \approx 0$ for $x\geq -20$, we would then like to ensure that $\min\{\text{supp}(\pi)\}\leq -30$ . Thus, we choose $L-x_{1,\pi} \in [30,49] $ so that $x_{1,\pi} \in [1,20]$. 
\begin{figure}[ht!]
    \begin{center}
       \begin{tikzpicture}[scale=0.5]
\draw[->] (-6,0) -- (6,0) node[right] {};
    \draw[->] (0,-0.1) -- (0,3) ;

\draw[thick,variable=\t,samples=500,color=red, domain=-6:6]
  plot ({\t },{((tanh(\t +  4-2)+tanh(-1*\t + 3)))/2}) ;
\draw[thick,variable=\t,samples=500,color=blue, domain=-5:5]
  plot ({\t },{(10*(e^(-(\t + 4)^2) /4))}) ;

  \draw (-5,0) node[below] {\tiny $-L$};
    \draw (5,0) node[below] {\tiny $L$};
  \draw[dotted] (-5,0)--(-5,2);
  \draw[dotted] (-4,0)--(-4,2.5);
  \draw (-4,0) node[below] {\tiny $x_{init}$};
 \draw (0,1.2) node[right] {{\color{red}\tiny $\pi(x)$}};
 \draw (-5,2.7) node[right] {{\color{blue}\tiny $Q_0(x)$}};
  \draw (6,0) node[above] {\tiny $x$};
  \end{tikzpicture}
\end{center}
    \caption{Illustration of the initial distribution of cells on the phenotype axis and the basal amplitude of the transport speed of the disease progression.}
    \label{fig:pi_0Q_0}
\end{figure}
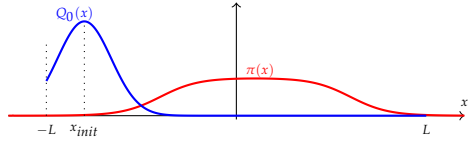  
    \item The parameter $\epsilon_{1,\pi}$ determines the rapidity of increase in the speed of transport. No biological information has been provided on this value but keeping in mind that it is essential for $\text{supp}(Q_0)\bigcap \text{supp}(\pi) \neq \emptyset$ , we then take  $\epsilon_{1,\pi} \in [10^{-4}, 10]$.
    \item The parameter $x_{2,\pi}$ is linked to the location on the phenotype axis where the transport decreases and stops. The idea of introducing $x_{2,\pi}$ is to avoid losing mass at the boundary value $L$. 
Thus, we choose $x_{2,\pi}$ so that $L-x_{2,\pi}<50$. Here, we fix this value to $x_{2,\pi} = 10$.
     \item The parameter $\epsilon_{2,\pi}$ determines the speed at which the transport speed is reduced. Since we do not expect any transport to happen after all susceptible cells have attained their cancerous state, we then choose $\epsilon_{2,\pi}=10$ so that $\pi(x)$ vanishes fast after $L- x_{2,\pi}$.
\end{itemize}
 \item The parameters $\beta$ and $\delta$ represent the maximum rate of the regulation of the progression by the sympathetic and sensory axons respectively. 
 Since no biological information is provided about their value and since they represent a modulation, we choose $(\beta,\delta) \in [10^{-4},1]\times [10^{-4},1]$.
  \item The function $\rho$ is a function that removes the effect of regulation for small values of $A_1$ and that is linear for larger values of $A_1$. This regulation through the function $\rho$ is motivated by the fact that the autonomous axons are non-negligibly present in healthy pancreas. We denote $A_1^{eq}$ the renormalized value of $A_1$ at the healthy state. To ensure that the quantity $A_1$ does not artificially affect the cancer progression when $A_1 < A_1^{eq}$, we construct $\rho$ as the following :

\begin{minipage}{0.55\linewidth}
\centering
$
\rho(x)=\begin{cases}
 0 \hspace{4mm} \text{for} \hspace{2mm}  0 \leq x \leq A_1^{eq} ,\\
\\
y = x-A_1^{eq} \hspace{4mm} \text{for} \hspace{2mm} A_1^{eq}\leq x \leq 1.
\end{cases}$
\end{minipage}
\begin{minipage}{0.4\linewidth}
    \begin{tikzpicture}
[scale=1.8]
    \draw[->] (-0.1,0) -- (1.3,0) node[right] {\tiny $A_1$};
    \draw[->] (0,-0.1) -- (0,0.7) node[above] {\tiny $\rho(A_1)$};
    
    \draw[ultra thick,variable=\t,samples=500,color=red, domain=0:0.5]
      plot ({\t },{0*\t });
      \draw[ultra thick,variable=\t,samples=500,color=red,domain=0.5:1]
      plot ({\t },{(\t-0.5) });
      \draw (0.5,-0.05) node[below] {\tiny $A_1^{eq}$};
      \draw (1,-0.05) node[below] {$1$};
      \draw[dotted] (1,0)--(1,0.5);
      \draw[dotted] (0,0.5)--(1,0.5);
      \draw (-0.05,0.5) node[left] {\tiny $1-A_1^{eq}$};
      \end{tikzpicture}
  \vfill
\end{minipage}
\end{itemize}

\paragraph{Cancer growth.} {The proliferation is generated by a logistic growth. More precisely,}
$$g(t,x;{\mathcal X}):=r(x)Q(t,x)\left(1- \frac{N(t)}{\tau_C} - \mu_1 A_1(t)+\mu_2A_2(t)\right).
$$
\begin{itemize}
 \item The function $r$ represents the basal growth rate of the pre-tumor and tumor cells. 
We expect for proliferation to start taking place in the presence of pre-cancerous cells, i.e. in a neighborhood of $0^-$. Hence, we consider that 
$$r(x)=\frac12 \gamma_r(1+\tanh(s_r x)).$$
\begin{itemize}
    \item The maximum proliferation rate that the cells can exhibit is $\gamma_r$. Here, we take $\gamma_r \in [10^{-2}, 10 ]$.
    \item  The parameter $s_r$ determines the rate at which proliferation takes place. For small values of $s_r$, then the proliferation increases in rate in precancerous lesions,
    %early stages of PanIN,
    whereas for big values of $s_r$, the tumor growth
    %proliferation
    increases in rate only in late stages of precancerous lesions.
    %of PanIN
      We choose  $s_r \in [0.2, 10]$.
\end{itemize}
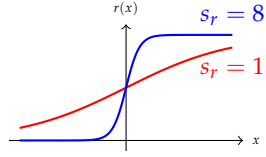
\begin{figure}[ht!]
\begin{center}
          \begin{tikzpicture}[scale=1.4]
\draw[->] (-1.1,0) -- (1.1,0) node[right] {\tiny$x$};
    \draw[->] (0,-0.1) -- (0,1.1) node[above] {\tiny $r(x)$};

\draw[thick,variable=\t,samples=500,color=red, domain=-1:1]
  plot ({\t },{(1+tanh(\t))/2 })
  node[right,below] {\small $s_r=1$};
  \draw[thick,variable=\t,samples=500,color=blue,domain=-1:1]
  plot ({\t },{(1+tanh(\t*8))/2 })
  node[right,above] {\small $s_r=8$};
  \end{tikzpicture}
  \end{center}
    \caption{Illustration of the behaviour of the basal growth rate of the pre-tumor and tumor cells.}
    \label{fig:s_r}
\end{figure}
\item The growth rate of the pre-tumor and tumor cells can be regulated by the presence of axons through the term
$$-\mu_1 A_1(t)+\mu_2 A_2(t).$$
In that, we consider that the sympathetic axons either slow down or accelerate tumor growth
%proliferation
whereas the sensory axons only accelerate tumor growth.
%proliferation. 
The parameters $\mu_1$ and $\mu_2$ modulate the impact that the axons have on the proliferation. We consider that $\mu_1 \in [-1,1]$ and $\mu_2 \in [10^{-4},1] $.
\item It's important to point out that even if cells proliferate, the total quantity of cells is limited and there is no explosion in finite time. In other words, we require $1- \frac{N(t)}{\tau_C} - \mu_1 A_1(t)+\mu_2A_2(t) < \infty$ with $\tau_C$ being the carrying capacity of the amount of cells in the absence of axons. Thus, using the fact that $A_i \leq 1$ for $i=1,\;  2$ we have, 
\begin{equation*} 
\begin{split}
N(t) &\leq  \tau_C (1 - \mu_1 A_1(t)+\mu_2A_2(t)) \leq    \tau_C (1 + \mu_2 +\mu_1^-), \\
&\leq    C(\tau_C),
\end{split}
\end{equation*}
with $C(\tau_C) = \tau_C (1 + \mu_2 +\mu_1^-)$ where $x^- = \tfrac{1}{2}(|x| - x)$. Since $N(0)=100$, we then require $N(0)\leq \tau_C$. We choose $\tau_C \in [120,300]$.
\end{itemize}

\paragraph{Sympathetic axons growth.}
We introduced an Allee effect in the growth term of the sympathetic axons (similar to what is seen in \cite{lolas2016tumour}). We recall that the dynamics of $A_1$ is given by the following 
$$r_{A_1} A_1(t)\left(\frac{A_1(t)}{\theta\left(\frac{N(t)}{N(0)}\right)}-1 \right)(1-A_1(t)).
$$
 \begin{itemize}
    \item The parameter $r_{A_1}$ is the growth rate of the autonomic axons. The bigger the value of $r_{A_1}$ is, the steeper the increase or decrease of $A_1$ is. We consider $r_{A_1}\in [10^{-4},5]$.
    \item 
    In the light of what is done in \cite{lolas2016tumour}, we also propose to modulate the threshold $\theta$ with the total amount of cells:
\begin{equation}\label{theta1}
    \theta\left(\frac{N(t)}{N(0)}\right) = \frac{A_1^{eq}}{2} + \frac12 \tanh{\left(s_{\theta}\left(\frac{N(t)}{N(0)}-1-0.1\right)\right)}+\frac12
\end{equation}
where $A_1^{eq}$ is the density of the sympathetic axons at the healthy state and $s_\theta$ is the steepness of the decrease speed of $A_1$.
The value of $A_1^{eq}$ is estimated from the data (cf. Table \ref{tab_parameters}).
In order to ensure a biological meaning, the following has to hold: 
for small time $t$, $ \theta\left(\frac{N(t)}{N(0)}\right) \approx \theta(1) $, 
and we want $\theta(1) < A_1(0)$ with $A_1(0) \approx A_1^{eq}$ to observe initially an increase in the autonomic axon density. Thus, we want 
$$s_\theta \geq \frac{\tanh^{-1}(1-A_1^{eq})}{10^{-1}}.$$
It leads us to consider $s_{\theta}\in [13,20]$.
For large values of the ratio $\frac{N(t)}{N(0)} $, we have $\theta\left(\frac{N(t)}{N(0)}\right) \approx 1+ \frac{A_1^{eq}}{2} > 1$ leading to a decrease in the autonomic axon density. 
\end{itemize}

\paragraph{Sensory axons growth.} 
The sensory axons follow a logistic growth regulated by the presence of PDAC
$$r_{A_2}\left( \frac{\NC (t)}{N(t)}\right)A_2(1-A_2)$$
with $r_{A_2}(s)=\bar r_{A_2}\tanh(s_{A_2}s)$.
The parameter $\bar r_{A_2}$ is the maximum growth rate of the sensory axons. We consider $\bar r_{A_2} \in [10^{-4},5]$. The parameter $s_{A_2}$ modulates the steepness of the increase of the growth of $A_2$. Here, we consider $s_{A_2} \in [10^{-4},10]$.

\subsection{Modeling denervation treatment}\label{sec:parden}
\noindent Testing the effects of denervation in preclinical models often requires the use of numerous animals, raising ethical concerns. Therefore, the use of a mathematical model to predict the effects of denervation becomes highly valuable, allowing for the replacement and reduction of the number of animals used. In the literature, the role of axons in cancer progression has been studied through chemical or surgical denervation. In our mathematical model, denervation is implemented by setting parameters related to the targeted axon to $0$ after the time of the intervention.\\

\noindent Specifically, denervating sympathetic axons is equivalent to suppressing the influence of $A_1$ on cell transport and proliferation. This is achieved by setting the parameters as follows after the denervation time:
$$\beta = 0,\hspace{3mm} \mu_1 = 0. $$
\noindent Conversely, denervating sensory axons is equivalent to suppressing the influence of $A_2$ on cell transport and proliferation, after the denervation time, the parameters are set as:
$$\delta = 0,\hspace{3mm} \mu_2 = 0 . $$
\noindent For denervating both axons, the parameters are set as follows after  the denervation time: 
$$\beta = 0,\hspace{3mm} \mu_1 = 0,\hspace{3mm} \delta = 0,\hspace{3mm} \mu_2 = 0.$$

\noindent We have to adapt the notations given above to take into account the above mathematical denervation when useful. The dynamical system defined in Section~\ref{sec:math}, in particular in Eq.~(3) is parameterized by  $\vartheta$ as explained in Section~\ref{sec:precision} and models the control treatment (AA). The total amount of cells and the total amount of cancer cells arising from this dynamical system is denoted by $N(t|\vartheta, \text{AA})$ and $N_c(t|\vartheta, \text{AA})$, respectively.
The dynamical system that models the denervation treatment is obtained from the same dynamical system, yet the value of $\vartheta$ is changed after time of the intervention, where some components are set to $0$ as explained above. The total amount of cells and the total amount of cancer cells arising from this second dynamical system is denoted by $N(t|\vartheta, \text{OHDA})$ and $N_c(t|\vartheta, \text{OHDA})$, respectively.

\section{Calibration} \label{sec:calibration}
In this section, we focus on the calibration of the model to the biological data at our disposal. It is rather complex in our case since
\begin{itemize}
    \item a small number of biological data is provided, with variability in samples
    as described in Section~\ref{sec:data},
    \item the model is far from being linear in the parameters and the predictions returned by the mathematical model cannot be written explicitly as a function of the parameters,
    \item we do not have at our disposal a likelihood function to explain the difference between the predictions of model and the data.
\end{itemize}

\noindent Moreover, the set of parameters is of rather high dimension ($d=14$) and we need to avoid selecting the best set of parameters that would overfit the few observed  data and would not reflect the biological phenomenon we are trying to capture.

\noindent To this aim, we propose in Section \ref{sub:crit} a 2-dimension criterion to measure the quality of the calibration of the model to the data and to our biological knowledge explained in Section~\ref{sec:data}: \textit{(1)} a mean squared error criterion based on the cell distribution and axon evolution, that measures the difference between model outcomes and data, and \textit{(2)} a criterion based on the biological knowledge we have on the chronology of cell evolution. As described in Section \ref{sub:optim}, we managed to extract biologically relevant sets of parameters that are consistent with the cell data and the chronological knowledge by exploring the parameter space with a multi-stage quasi-Monte Carlo algorithm that starts from a flat, uniform prior over the parameter space which is the hypercube given by the range column of Table \ref{tab_parameters}, and ends with the $0.4\%$ best sets of parameters according to the criterion, that are presented in Section~\ref{sec:relevantsets} and in Appendix~\ref{app:calib}.

\subsection{Biological data}\label{sec:data}
{The selection of experimental data for calibrating the mathematical model is crucial and should align with the characteristics of the relevant preclinical model.} 
{The calibration process encompasses various aspects of the animal model, incorporating chronological insights into cancer progression derived from the literature.} Additionally, data on sympathetic nerve axons density, sensory nerve axons density, and cell density at different stages of cancer progression are considered during the calibration process.\\

\paragraph{Mice model and data acquisition methods.} In this study, we use
the $Kras^{LSL-G12D/+};\; Cdkn2a (Ink4a/Arf)^{lox/lox};\; Pdx1$-$Cre$ (KIC) )  mouse model of pancreatic cancer. The overexpression of a mutated form of the oncogene $KRAS$ in pancreatic cells induces transdifferentiation, where healthy acinar cells (the functional unit of the exocrine pancreas) transform into ductal cell-like cells, leading to the formation of acinar to ductal metaplasia (ADM). Subsequently, ADM can progress to form premalignant pancreatic intraepithelial neoplasia (PanIN). The deletion of tumor-suppressor genes Ink4A and Arf further accelerates tumor development, resulting in PDAC itself. The KIC mice model successfully replicates the stepwise progression observed in human pancreatic cancer (cf. \cite{aguirre2003activated}, and Figure \ref{fig:scheme_chronological}). Several mouse pancreases at the time points of $6.5$ weeks and $8$ weeks are utilized in the experiments to accommodate biological heterogeneity.
The technique used to quantify nerve axons density and cell density within lobules of the pancreatic tissue is detailed in the source data file of \cite{guillot2022sympathetic}. 
In short, data comes from the quantification of 3D cleared tissue images obtained by Light Sheet Fluorescence Microscopy. Tissue staining was achieved through immunostaining and tissue clearing using the iDISCO+ protocol. Tyrosine hydroxylase antibody staining (TH), an enzyme involved in the biosynthesis of norepinephrine (one of the main neurotransmitters of the sympathetic nervous system), and Calcitonin gene-related peptide (CGRP) antibody were used to visualize sympathetic and sensory innervation respectively. Regions of Interest (ROI), namely asymptomatic (ASYM), ADM, PanIN, and PDAC, were segmented based on the autofluorescence signal of the tissue, and their volumes were measured. Axonal networks were manually reconstructed to measure the total axon length in each ROI. 
The unitless axon density was calculated as follows: $\((\text{axons length sum} \times 1000) (\mu m) / (\text{volume of each Regions of Interest (ROI))}^{\frac{1}{3}} (\mu m) \) $. 
Densities obtained through experimentation were
normalized by first dividing by the cubic root of the volume in which the density was measured and then by an affine transformation to obtain proportions. This normalization facilitates the comparison of quantities both within the experimental data and with the outputs provided by the mathematical model (cf. Figures \ref{fig:boxplot_syaxons} and \ref{fig:boxplot_seaxons}).\\
The method used to describe the proportion of tissue categorized by their phenotypes in histological sections through the pancreas of a $6.5$-week-old mouse can be found in the source data file of \cite{guillot2022sympathetic} (cf. Figure \ref{fig:boxplot_pdac}).

{\paragraph{Knowledge on the chronological process.}

Based on the previous characterization of the KIC model (seen in \cite{aguirre2003activated} and in our personal observation), 
the time of observation of ADM lobules denoted $\tadm$ is around $17$ days,
the first appearance of relatively advanced-staged PanIN lobules denoted $\tpanin$ is observed around $24.5$ days, 
and the first appearance of PDAC lobules denoted $\pdacearly $ is  observed around 35 days.
The median survival of a KIC mouse model is $9$ weeks and so, at 6.5 weeks ($=\tpdac$) the mice has already developed a tumor, but at $8$ weeks ($=\pdacadvanc$), the cancer is really aggressive. All these chronological assumptions are summarized in Figure \ref{fig:scheme_chronological}. 
Additionally, we include a time interval of plus or minus three days around the above temporal values to account for variability in observations. }

\begin{figure}[ht!]
\centering
\begin{center}
\scalebox{0.9}{
    \begin{tikzpicture}[scale=0.30] 
\draw[->] (4,0) -- (59,0);
\draw[ultra thick,red] (4,0)--(10,0);
\draw[ultra thick,red] (14,0)--(20,0);
\draw[ultra thick,red] (21.5,0)--(27.5,0);
\draw[ultra thick,red] (32,0)--(38,0);
\draw[ultra thick,red] (42,0)--(48,0);
\draw[ultra thick,red] (53,0)--(58.9,0);
\foreach \x in {7,17,24.5,35,45,56}
    {\draw (\x,-0.2)--(\x,0.2) node [above] {\x};}
\foreach \x in {7,17,24.5,35,45,56}
    {\draw (\x,1.5) node [above] {\verteq};}  
    
 \foreach \x/\y in {7/\tasymp,17/\tadm,24.5/\tpaninadvanc,35/\pdacearly,45/\tpdac,56/\pdacadvanc}
    {\draw (\x,3) node [above] {$\y$};}     

\foreach \x/\y in {17/ADM,24.5/PanIN ,35/PDAC ,45/6.5 weeks,56/Advanced PDAC}
     {\draw (\x,-1) node[below]{\y};}
 
\end{tikzpicture}
}
\end{center}
\caption{Diagram outlining the chronological assumptions regarding cancer progression in pancreas of KIC mice, drawing from \cite{aguirre2003activated} and our firsthand observations. The indicated times represent the estimated average time at which each new stage is expected to be observed for the first time. The bold red segments indicate a time interval of plus or minus three days around the mentioned temporal values, encompassing variations in chronological estimates.  }
\label{fig:scheme_chronological}
\end{figure}
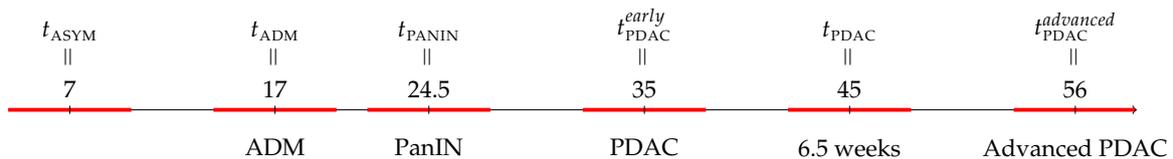

\paragraph{Sympathetic and Sensory  nerve axons quantification.}
{The normalized densities of sympathetic and sensory nerve axons are shown in Figure \ref{fig:boxplot_syaxons} and Figure \ref{fig:boxplot_seaxons}, respectively, quantified at $6.5$ weeks of the KIC age. 
For sensory axons, density measurements were also performed at $8$ weeks for the PDAC lesions.
Also, each sample of density measurements from the data does not only characterize the axon densities but also the
tissue stages from where it was observed (ROI). 
This additional information may not be fully exploited by the model because information about space is not taken into account in the construction of the model. 
One way to use all information of the data is to extrapolate a dynamic based on the ROI type of the data origin. 
Since we know that $\tasymp$, $\tadm$, $\tpanin$, and $\tpdac$ are the times of observation of each ROI type during cancer progression, we then consider that the quantification of the normalized densities of the axons in each ROI type corresponds to those times respectively.
Those timings are stated on the x-axis of the Figure \ref{fig:boxplot_syaxons} and Figure \ref{fig:boxplot_seaxons}. 
Hence, the dynamics of the axons are illustrated by the green curves in Figures \ref{fig:boxplot_syaxons} and \ref{fig:boxplot_seaxons}.}
It allows to combine qualitative information (chronological knowledge) to quantitative information and ensures a more precise calibration of the mathematical model. 
}

\begin{figure}[ht!]
\includegraphics[width=0.63\textwidth]{./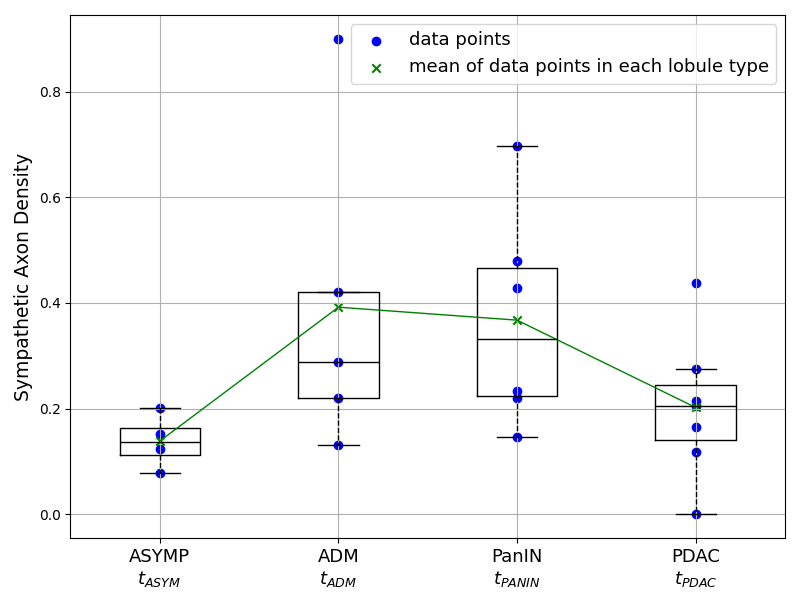}
\centering
\caption{Whisker Plot of Normalized Sympathetic Nerve Axon Density: Data from different KIC mice depict the sympathetic axonal density within various ROI in the pancreas at 6.5 weeks of age, categorized as ASYMP, ADM, PanIN, or PDAC. The quantification involved 4 ASYMP, 5 ADM, 6 PanIN, and 7 PDAC ROIs \cite{guillot2022sympathetic}. The dotted blue points correspond to the data. The green curve represents the mean density in each ROI, with identified stages on the x-axis extrapolated from the chronological knowledge of each ROI type. }\label{fig:boxplot_syaxons}
\end{figure}

\begin{figure}[ht!]
\includegraphics[width=0.63\textwidth]{./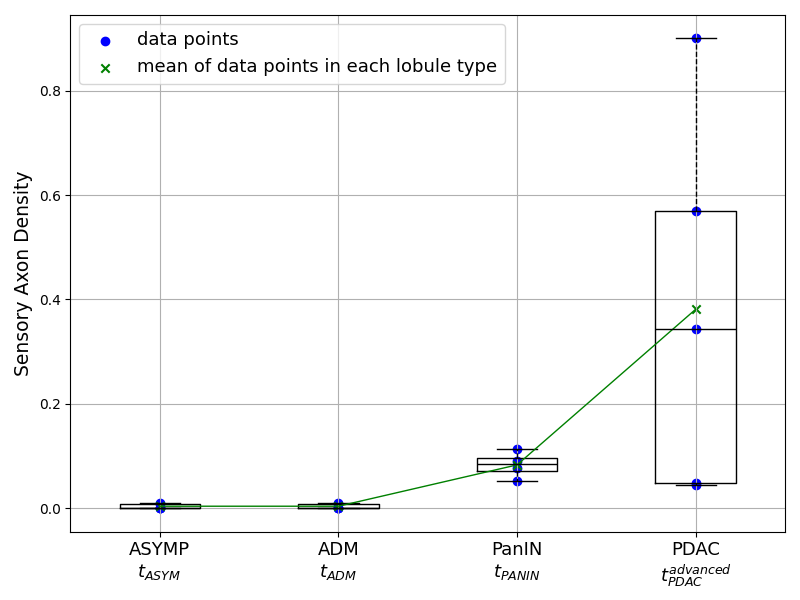}
\centering
\caption{Whisker Plot of Normalized Sensory Nerve Axon Density: Data from different KIC mice depict the sympathetic axonal density within various ROI in the pancreas at $6.5$ weeks and $8$ weeks of age, categorized as ASYMP, ADM, PanIN, or PDAC. The quantification involved $6$ ASYMP, $6$ ADM, $6$ PanIN, and $5$ PDAC ROIs \cite{guillot2022sympathetic}. The dotted blue points correspond to the data. The green curve represents the mean density in each ROI, with identified stages on the x-axis extrapolated from the chronological knowledge of each ROI type.}\label{fig:boxplot_seaxons}
\end{figure}

\paragraph{PDAC Cell Concentration.} 
Figure \ref{fig:boxplot_pdac} illustrates the data extracted for the normalized cancer cell concentration at 6.5 weeks. We set the time of observation for the data and aim to identify model dynamics that closely match these data points at $6.5$ weeks. Through observation, we note that the proportion of PDAC cells is higher in OHDA samples than in AA samples.

\begin{figure}[ht!]
\includegraphics[width=0.63\textwidth]{./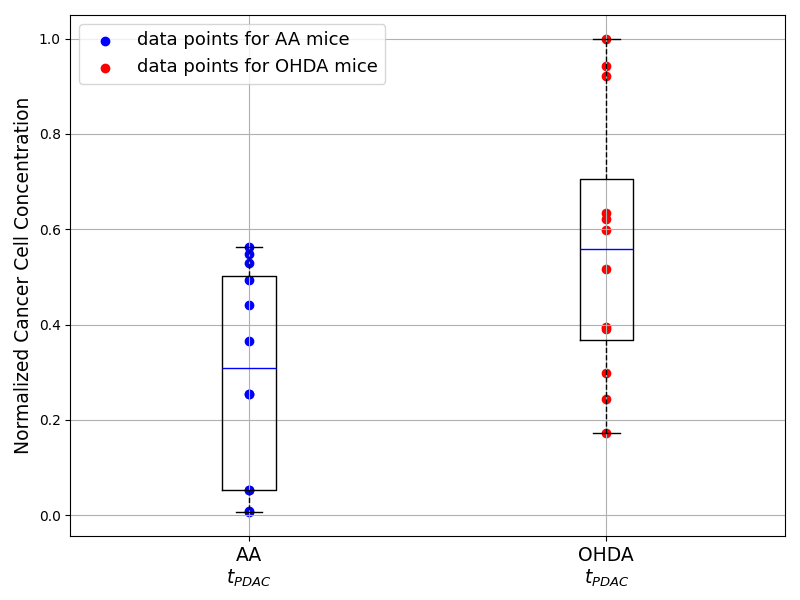}
\centering
\caption{Whisker Plot of Normalized Cancer Cell Concentration: Sympathectomy was conducted through the injection of neurotoxin 6-hydroxydopamine (6-OHDA) vehicle solution between $3.5$ and $4$ weeks of the KIC age. We utilized data published in \cite{guillot2022sympathetic}, where the concentration of cancer cells in $6$ KIC mice was quantified at $6.5$ weeks. The mice were distributed as follows: $3$ mice injected with ascorbic acid (AA) and $3$ mice injected with the neurotoxin 6-hydroxydopamine (OHDA). For each mouse, we obtained $4$ biopsies, and for each biopsy, we collected the proportion of PDAC. Therefore, we consider a total of $6\times 4=24$ quantifications of the proportion of PDAC, with  $3\times 4 = 12$ quantifications from mice injected with AA and $12$ quantifications from mice injected with OHDA. The dotted blue and red points correspond to the data for control mice and denervated mice respectively. }\label{fig:boxplot_pdac}
\end{figure}

\subsection{A 2-dimensional criterion }\label{sub:crit}
The first dimension of our criterion is a mean squared error criterion that measures the difference between model outcomes and data. We consider the following data:
\begin{itemize}
    \item $n$ biological samples of cell distributions $p_i\in(0;1)$ under treatment $u_i\in\{\text{AA}, \text{OHDA}\}$ ($i=1,\ldots, n$) that should be compared to the proportion of proliferating cells $p(t|\vartheta,u)$ given by the model, on average over time $t\in(0,T)$
    \item $m$ biological samples of axon density $a_i\in(0;1)$ in AA ($i=n+1,\ldots, n+m$) that should be compared to the axon density evolution $A(t|\vartheta)$ given by the model, on average over the time range $t\in I_i$ corresponding to the kind of tissues of $i$-th biological sample (ADM, PanIN, PDAC or mature PDAC). Moreover, $z_i\in\{A_1, A_2\}$ {indicates} whether the observed density $a_i$ is a density of sympathetic ($A_1$) or sensory ($A_2$) axons.
\end{itemize}
The first coordinate of our criterion is defined as the mean squared error (MSE) given by
\begin{equation}
G_1(\vartheta) = \frac{2}{n}\sum\limits_{i=1}^n \frac{1}{|I_{\tpdac}|}\int_{I_{\tpdac}} \Bigg(p_i -p\big(t\big|\vartheta, x_i\big) \Bigg)^2dt + \frac{1}{m}\sum\limits_{j=n+1}^{n+m} \frac{1}{|I_j|}\int_{I_j} \Bigg(a_j -A\big(t\big|\vartheta,z_j\big) \Bigg)^2dt.
\label{eq:costsumdata}
\end{equation}

\noindent In the first part of the MSE defined in \eqref{eq:costsumdata}, we have integrated the square difference over a time interval of size 6 because the exact time (on our time axis) at which the cell distributions were observed is not known
\begin{itemize}
    \item $p(t|\vartheta, u_i)\defeq {N_c(t|\vartheta, u_i)}\Big/{N(t|\vartheta, u_i)}$ is the proportion of proliferative cells given by the model at time $t$,  $N_c(t|\vartheta, u_i)$ being the amount of proliferative cells  and $N(t|\vartheta, u_i)$ being the total amount of cells at time $t$ given by the model of Section~\ref{sec:parden}.
    \item $I_{\tpdac} = [\tpdac-3;\tpdac+3]$ is the time range at which the cell distributions were observed.
\end{itemize}

\noindent In the second part of the MSE defined in \eqref{eq:costsumdata}, we have integrated the square difference over a time interval of size 6 because the exact time (on our time axis) at which the axon densities were observed is not known, but we know the kind of tissues we sampled. More precisely,
\begin{itemize}
    \item $I_j\in\Big\{[t_{ADM}-3; t_{ADM}+3], [t_{PANIN}-3; t_{PANIN}+3], [\tpdac-3; \tpdac+3], [\pdacadvanc-3; \pdacadvanc+3]\Big\}$ is the time range corresponding to the observation of the $j$-th biological sample, i.e the sample coming from either the ADM, PanIN, PDAC or mature PDAC {lobules}.
    \item $A(t|\vartheta,z)\defeq A_1(t|\vartheta) \text{ or } A_2(t|\vartheta)$ is the axon density given by the model at time $t$  of type $z$  and parameter set $\vartheta$.
\end{itemize}

\noindent The second component of our criterion is a penalization term to capture the correct behavior of the model according to the biological knowledge we have on the chronology of appearances of PDAC cells(see \cite{aguirre2003activated}) in the control group (AA). We do not expect many PDAC cells at time $t<\pdacearly$ where $\pdacearly$ is the first time of appearance
of PDAC lobules. Thus, the second component of our criterion is defined as
\begin{equation}
G_{2}(\vartheta)  = \frac{1}{\pdacearly} \int_0^{\pdacearly} p(t|\vartheta, \text{AA}) dt,
\label{eq:chrono_penal}
\end{equation}
where $p(t|\vartheta)$ is the proportion of proliferative cells at time $t$ given by the model. The quantity $G_2(\vartheta)$ is the mean proportion of proliferative cells in the control group (AA) over the time range $\left[0, \pdacearly\right]$ and it should be low to respect the biological knowledge we want to introduce in the calibration process.

 \subsection{Seeking for parameter sets with low criterion values} \label{sub:optim}

The range of parameters that makes sense in the mathematical model is the hypercube $\mathcal H$ given in Table \ref{tab_parameters}. Without constraining $\vartheta\in\mathcal H$ with the above criterion $G(\vartheta)$, the dynamics of the model can change strongly based on the value of $\vartheta$. 
Many roads are available to restrain this range and obtain relevant parameter sets based on the 2-dimensional criterion $G(\vartheta) = (G_1(\vartheta), G_2(\vartheta))$. We could have tried to optimize a $1$-dimensional criterion such as $\bar G_\lambda(\vartheta)=G_1(\vartheta) + \lambda G_2(\vartheta)$, where $\lambda$ is a tuning parameter that defines the trade off between the data and the chronological information. Yet tuning $\lambda$ is difficult. Moreover, because of non-convexity, many local minima may exist, exhibiting different dynamics of the model.  Instead of minimizing a $\bar G_\lambda(\vartheta)$, we use a
%we have used
a multi-stage algorithm that selects first the best sets $\vartheta$ of parameters according to $G_1(\vartheta)$ and then refines the selection according to $G_2(\vartheta)$.\\

\noindent The results given by the multi-stage algorithm given in Appendix \ref{app:numerics} is a (relative large) collection of parameter sets $\vartheta$ that are consistent with the cell data and the chronological knowledge, i.e. that have low values of both $G_1(\vartheta)$ and $G_2(\vartheta)$. We start with a massive quasi-Monte Carlo sampling the hypercube $\mathcal H$ with a uniform distribution that acts here as a non-informative prior. The massive collection is then filtered according to both components of $G(\vartheta)$ and re-sample the hypercube $\mathcal H$ with an instrumental Gaussian distribution fitted on the filtered collection. The filtering step is then repeated to get the final collection of parameter sets. 

\begin{figure}[ht!]
\centering
\begin{center}
\scalebox{0.9}{
    \begin{tikzpicture}[scale=0.30,
    box/.style={rectangle, draw=blue, thick, fill=blue!20, text width=5em,align=center, rounded corners, minimum height=2em},
    box2/.style={rectangle, draw=red, thick, fill=red!20, text width=5em,align=center, rounded corners, minimum height=2em},
    line/.style ={draw, thick, -latex',shorten >=2pt},
    ]
  \draw (0,10) node[box] (J0){${\mathcal J}_0$};
 \draw (0,0) node[box] (J1){${\mathcal J}_1$};
  \draw (10,10) node[box2] (J2){${\mathcal J}_2$};
   \draw (10,0) node[box2] (J3){${\mathcal J}_3$};
 \draw[line] (J0.south)--(J1.north);
 \draw (5,5) node { Double filtering};
 \draw[line] (J2.south)--(J3.north)  ;
 \draw[line] (0,15) -- (J0.north) node [pos=-0.3] {Uniform sampling};
 \path [line] (J1.south)--(0,-5)--(15,-5)--(15,15)--(10,15)--(J2.north);
 \draw (8,-6) node {Mean and covariance};
 \draw (16,16) node {Gaussian sampling};
 \draw (7,18) node {Quasi-Monte Carlo};

\end{tikzpicture}
}
\end{center}
\caption{Schematic view of the step-wise algorithm used to select parameter sets. The collection $\mathcal J_0$ of parameter sets is distributed uniformly over the hypercube $\mathcal H$ of Table~\ref{tab_parameters}. The collection $\mathcal J_1$ is obtained by filtering $\mathcal J_0$ with $G_1(\vartheta)\le g_1$ and then $G_2(\vartheta)\le g_2$. The collection $\mathcal J_2$ is obtained by re-sampling $\mathcal H$ with an instrumental Gaussian distribution fitted on $\mathcal J_1$. The collection $\mathcal J_3$ is obtained by filtering $\mathcal J_2$ with $G_1(\vartheta)\le g_1'$ and then $G_2(\vartheta)\le g_2'$. The thresholds $g_i$ and $g_i'$ are chosen as some quantiles of small order of the collection that is constrained. The final collection $\mathcal J_3$ is a collection of parameter sets that are consistent with the cell data and the chronological knowledge, i.e. that have low values of both $G_1(\vartheta)$ and $G_2(\vartheta)$ (see Appendix \ref{app:calib} for more details).}
\label{fig:select param}
\end{figure}
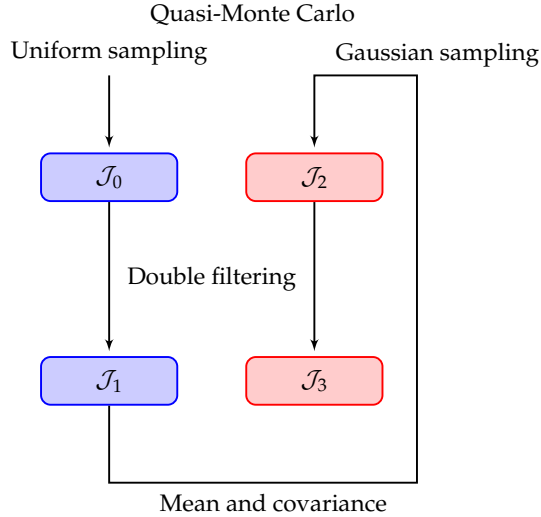

\subsection{Relevant sets of parameters}\label{sec:relevantsets}

\noindent By taking some of the sets of parameters obtained in Section \ref{sub:optim}, as seen in Figure \ref{fig:model_dynamics}, $A_1$ and $A_2$ have variability in their time of remodeling, as well as there is a difference in the time of arrival of cancer cells. 
We observe that despite the variability in dynamics, all parameter sets obtained fit the biological data. In the subsequent sections, we will utilize some of these parameter sets to conduct a detailed investigation of in silico denervation 

\begin{figure}[ht!]
\includegraphics[width=0.8\textwidth]{./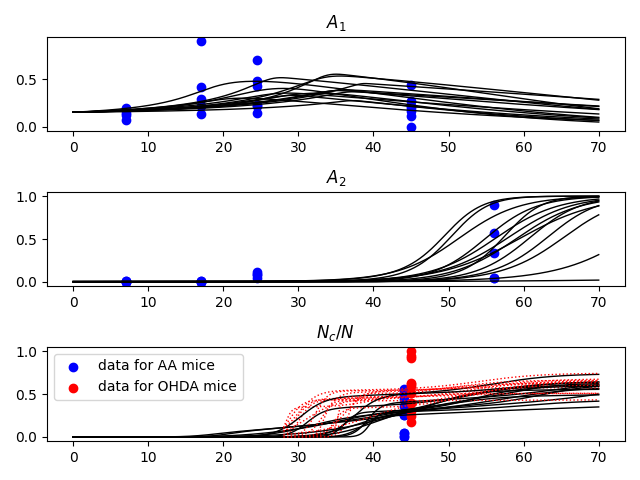}
\centering
\caption{Dynamics of the model using different sets of parameters from the relevant sets of parameters obtained. (Top) Evolution of the sympathetic axons ($A_1$). (Middle) Evolution of the sensory axons ($A_2$). (Bottom) Evolution of the proportion of cancer cells ($N_c/N$). The blue dots in each sub-figure correspond to the biological data provided for the sympathetic axons, sensory axons, and the frequency of cancer cells respectively. 
The red dots in the bottom frame correspond to the frequency of cancer cells observed at $6.5$ weeks after an early denervation at $28$ days with the use of OHDA, and the dashed red curves correspond to the evolution of the concentration of cancer cells after early denervation.  }\label{fig:model_dynamics}
\end{figure}
%=====================================================================
\section{Impact of the in silico denervation}
\label{sec:in_silico}
The aim of this section is to investigate the effect of denervation on cancer progression using specific parameter sets selected from Section \ref{sec:relevantsets}, chosen for their cost-effectiveness and alignment with biological data outlined in Table 2. 
Their values for the criteria of fitting the data \eqref{eq:costsumdata} are $0.264$, $0.2461$, and $0.246$ respectively. 
Their values for the criteria of fitting the chronological assumption \eqref{eq:chrono_penal} are $ 0.033$, $0.002$, and  $0.014$ respectively. 

\noindent The parameter configurations 2 and 3 outlined in Table \ref{tab:allpar} are selected so that sympathetic axons induce  an inhibitory effect 
%detrimental impact
on tumor growth ($\mu_1>0$). 
However, they are distinguished by varying durations for axon remodeling, reflecting patterns observed in the literature \cite{guillot2022sympathetic} and accommodating the variability inherent in biological data.
Furthermore, in light of findings from a literature source \cite{renz2018beta2} suggesting a positive impact of $A_1$ on tumor growth, 
%proliferation
we also choose parameter set 1 with $\mu_1<0$ to investigate this case.\\
In what follows, we analyze the effects of varying denervation timings on either sympathetic or sensory axons, or both, in order to elucidate their impacts on tumorigenesis.
 
\begin{table}[H]
%\hspace{-26mm}
\centering
\begin{tabular}{|l|c|c|c|c|c|c|c|c|c|c|c|c|c|c|c|c|c|c|c|c|c|}
 \hline
  & \small{$\pi_0$} & \small{$\beta$} & \small{$\delta$} & \small{$\gamma_r$} & \small{$s_r$} & \small{$\tau_C$} & \small{$\mu_1$} & \small{$\mu_2$} & \small{$r_{A_1}$} & \small{$\bar r_{A_2}$} &  \small{$x_{1,\pi}$} & \small{$\epsilon_{1,\pi}$} & \small{$s_\theta$} &  \small{$s_{A_2}$}  \\
  
  \hline
   \scriptsize{\textbf{Set $1$}}  & \scriptsize $ 2.005$ & \scriptsize  $0.73$ & \scriptsize $0.398$ &\scriptsize  $1.50$& \scriptsize $2.68$ & \scriptsize $172.295$ & \scriptsize $-0.176$ & \scriptsize $0.214$ & \scriptsize $0.055$ & \scriptsize $0.241$ & \scriptsize $32.97$ & \scriptsize $5.357$ & \scriptsize $15.131$ & \scriptsize $4.151$  \\
 \hline
\scriptsize{\textbf{Set $2$}} & \scriptsize $4.589$ & \scriptsize  $0.504$ & \scriptsize $0.829$ &\scriptsize  $1.160$ & \scriptsize $2.775$ & \scriptsize $177.807$ & \scriptsize $0.609$ & \scriptsize $0.139$ & \scriptsize $0.077$ & \scriptsize $0.928$ & \scriptsize $34.56$ & \scriptsize $6.555$ & \scriptsize $14.733$ & \scriptsize $1.105$  \\
  \hline
\scriptsize{\textbf{Set $3$}}  & \scriptsize $1.795$ & \scriptsize  $0.535$ & \scriptsize $0.398$ &\scriptsize  $4.537$& \scriptsize $6.097$ & \scriptsize $150.576$ & \scriptsize $0.176$ & \scriptsize $0.678$ & \scriptsize $0.032$ & \scriptsize $0.29$ & \scriptsize $30$ & \scriptsize $4.385$ & \scriptsize $17.609$ & \scriptsize $6.49$  \\
 \hline

\end{tabular}
\caption{Different sets of parameters }
\label{tab:allpar}
\end{table}
% ===================================================================

\subsection{Indicator of the {invasive} potential}\label{sec:indic}
To study the effect of denervation and the role of axons in pancreatic cancer progression, we investigate the effect of three types of denervation:
\begin{itemize}
    \item the denervation of sympathetic axons ($A_1$),
    \item the denervation of sensory axons ($A_2$),
    \item the denervation of both types of axons ($A_1$ and $A_2$).
\end{itemize}

\noindent A quantitative measure of the effect of denervation for any of the above three types of denervation can be the difference at a given time between the amount of cancer cells in the control case (with both axons regulating the dynamics) and in the denervated case.
Although this function allows us to measure the effect of denervation, it does not allow us to take into account all the specificities of temporal dynamics.
Indeed, the earlier a cancer cell appears, the higher the probability of accumulating genetic alterations that enhance aggressiveness and metastasis.
So we construct the following function to measure the impact of denervation:
 \begin{equation} \label{eq:indic}
   \hspace{15mm}
\mathcal{I}(T) = \frac{1}{T} \left(  \int_0^T \frac{\NC(t|\vartheta,\text{OHDA})}{N(t|\vartheta,\text{OHDA})}dt -\int_0^T \frac{\NC(t|\vartheta,\text{AA})}{N(t|\vartheta,\text{AA})}dt \right)  
 \end{equation} 
with the notations of Section~\ref{sec:parden}.
{This indicator gives information about the invasive potential at time $T$ because it takes into account the history of existence of cancer cells over time. 

\noindent This indicator is positive for an overall pro-tumoral effect of denervation ($\mathcal{I}(T)>0$), and conversely it is negative for an overall anti-tumoral effect of denervation ($\mathcal{I}(T)<0$).

\subsection{{In silico denervation at defined time points}}\label{sec:axonsrole}
{
In this section, we apply the three types of denervations stated in Section \ref{sec:indic}. These denervations are implemented at two specific times:
\begin{itemize}
    \item at 28 days, which corresponds to the time between the observation of PanIN and the onset of PDAC for the KIC model (cf. Section \ref{sec:data}),
    \item at 50 days, which corresponds to the time between the onset of PDAC and the observation of advanced PDAC (cf. Section \ref{sec:data}). 
\end{itemize}
The dynamics of the model subject to these denervations are illustrated in Figure \ref{fig:den}.

{We denote by $T$ the time of observation, then}
the effects of the denervations are quantified by the indicator $\mathcal{I}(T)$ given by \eqref{eq:indic} and the results are summarized in Table \ref{tab:indic_denerv}.
{The invasive potential can be quantitatively compared with the various results in the literature on the effect of axons on the initiation and progression of PDAC:}
\begin{itemize}
    \item {in \cite{saloman2016ablation}, the authors show that {ablating sensory axons prior to the onset of the pathology (referred to as early denervation)}
    inhibits tumorigenesis of the pancreatic cancer. 
    This result is translated by $\mathcal{I}(T)<0 $.}
    \item {In \cite{guillot2022sympathetic}, {the authors show that ablating sympathetic axons prior to the onset of the pathology (referred to as early denervation) leads to an acceleration of tumor growth and metastasis. This result is translated by $\mathcal{I}(T)>0$ in the case of an early denervation of $A_1$.}}
    
    \item {Finally, in \cite{renz2018beta2} and \cite{guillot2022sympathetic}, {the authors conduct surgical %splanchnic
    denervation for both $A_1$ and $A_2$  after and before invasive tumor formation, categorized as early and late denervation, respectively. This procedure results in the ablation of mixed sympathetic and sensory axons.}
    \begin{itemize}
        \item In \cite{renz2018beta2} they {show} that removing the axons inhibits tumor growth. Hence, this result is that $\mathcal{I}(T)<0$ in the case of late denervation of $A_1$ and $A_2$.
        \item However, in \cite{guillot2022sympathetic}, early denervation of the splanchnic nerve leads to a pro-tumoral effect with observations of metastasis and a smaller survival time of mice.
        This result is translated by $\mathcal{I}(T)> 0$ in the case of early denervation of $A_1$ and $A_2$.
    \end{itemize}
     }
\end{itemize}
The only nuance to note is that the time at which the effect of denervations is observed in the various articles is not so easily translated into a quantifiable datum T in days. 
For instance, the mice models used may differ in terms of chronological progression of the disease and the different experimental techniques used may also affect the temporal component of the data in their own way.
{To make the most of this variability,} the final time of observation $T$ is set at $T=70$ days as {the rounded upper bound of the median survival of mice ($63$ days in \cite{guillot2022sympathetic}) {which allows us to observe the effect of the denervation
over the entire time range.}}

\begin{table}[H]
\centering
\begin{tabular}{|l|c|c|c|c|c|c|}
 \hline
  &\scriptsize $A_1$ at $28d$ &\scriptsize $A_1$ at $50d$ &\scriptsize $A_2$ at $28d$ &\scriptsize $A_2$ at $50d$ &\scriptsize $A_1$ and $A_2$ at $28d$ &\scriptsize  $A_1$ and $A_2$ at $50d$   \\
  
  \hline
  \scriptsize{\textbf{Set $1$}} & \scriptsize $-2.311$ & \scriptsize  $-0.56$ & \scriptsize $-1.697$ &\scriptsize  $-1.58$ & \scriptsize $-3.961$ & \scriptsize $-2.249$ \\
\hline
\scriptsize{\textbf{Set $2$}} & \scriptsize $5.893$ & \scriptsize  $0.876$ & \scriptsize $-2.163$ &\scriptsize  $-1.85$ & \scriptsize $3.435$ & \scriptsize $-0.788$ \\
  \hline
\scriptsize{\textbf{Set $3$}} & \scriptsize $2.493$ & \scriptsize  $0.441$ & \scriptsize $-6.94$ &\scriptsize  $-6.366$ & \scriptsize $-4.482$ & \scriptsize $-5.47$ \\
\hline

\end{tabular}
\caption{Values in percentage of the invasive potential $\mathcal{I}(70)$ (cf. \eqref{eq:indic}) for the sets of parameters of Table \ref{tab:allpar}. The columns indicate the different denervations, sympathetic axons ($A_1$), sensory axons ($A_2$) or both types of axons ($A_1$ and $A_2$), at early stage (28 days) or late stage (50 days). }
\label{tab:indic_denerv}
\end{table}

\begin{figure}[ht!]
    \centering
    \begin{subfigure}[H]{0.495\textwidth}
 \includegraphics[width=\textwidth]{./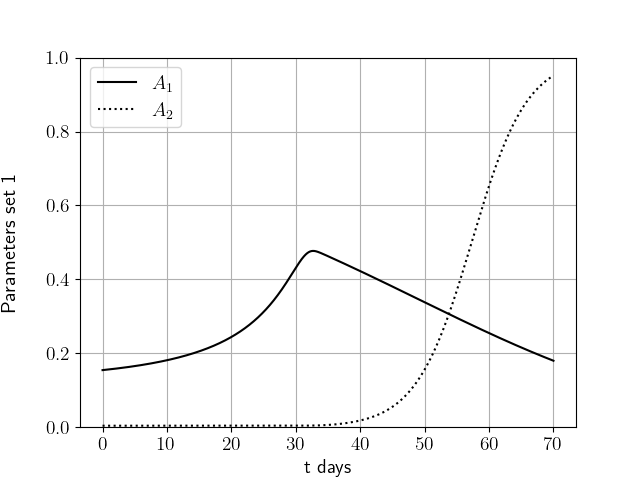}
        \caption{}
      \label{fig:axonremod2}
    \end{subfigure}
    \hfill
    \begin{subfigure}[H]{0.495\textwidth}
    \captionsetup{skip=-0.7ex}
 \includegraphics[width=\textwidth]{./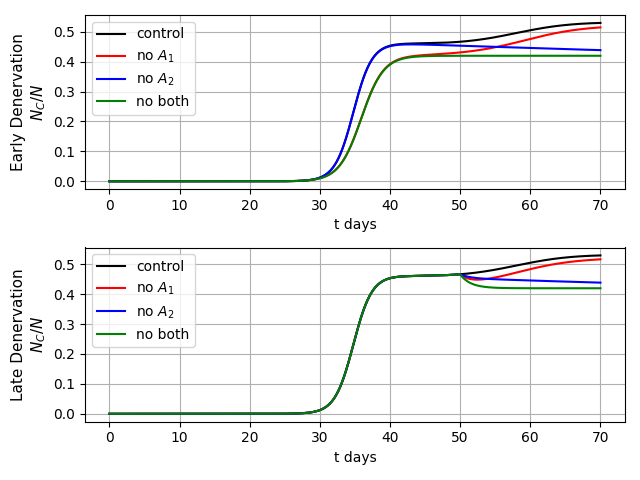}
        \caption{}
        \label{fig:par2}
     \end{subfigure}
     \vfill
    \begin{subfigure}[H]{0.495\textwidth}
        \includegraphics[width=\textwidth]{./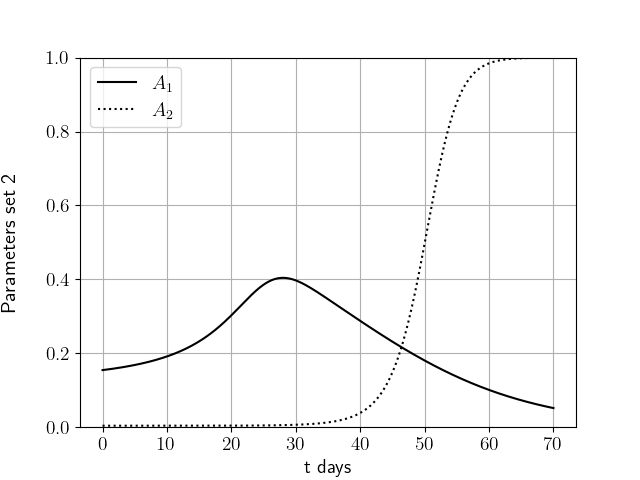}
        \caption{}
        \label{fig:axonremod0}
    \end{subfigure}
    \hfill
    \begin{subfigure}[H]{0.495\textwidth}
       \captionsetup{skip=-0.7ex} \includegraphics[width=\textwidth]{./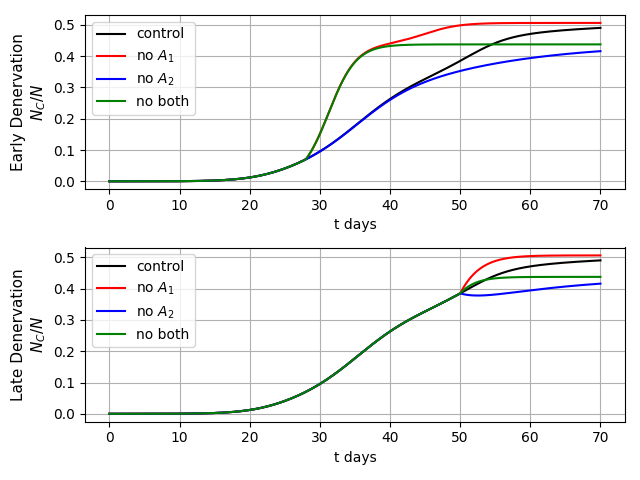}
        \caption{}
        \label{fig:par0}
     \end{subfigure}
     \vfill
    \begin{subfigure}[H]{0.495\textwidth}
        \includegraphics[width=\textwidth]{./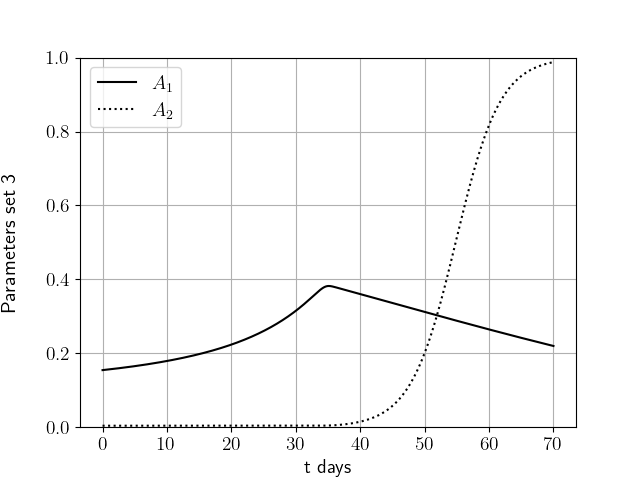}
        \caption{}
        \label{fig:axonremod1}
    \end{subfigure}
    \hfill
    \begin{subfigure}[H]{0.495\textwidth}
    \captionsetup{skip=-0.7ex}
    \includegraphics[width=\textwidth, height=6cm]{./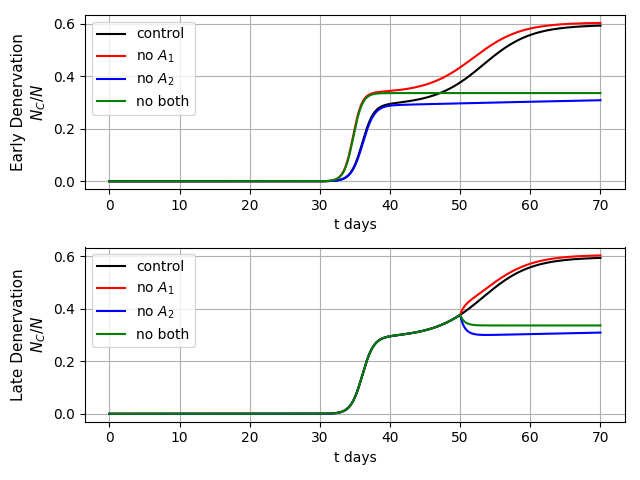}   
    \caption{}
    \label{fig:par1}
    \end{subfigure}

    \caption{Model dynamics subject to denervation. (Left) Evolution of the axons. (Right) Evolution of the proportion of cancerous
    cells $\tfrac{N_C(t)}{N(t)}$. The different colors show the effect of different denervations. The first row (resp. second row and third row) corresponds to the dynamics for the parameters set 1 (resp. 2 and 3) in Table \ref{tab:allpar}.}
        \label{fig:den}
\end{figure}

\subsubsection{Case when $\mu_1<0$}
Based on Table \ref{tab:indic_denerv}, the invasive potential of parameters set $1$ is always negative for the three types of denervations. Indeed,
only an anti-tumoral result is observed for the early and late denervation as seen in Figure \ref{fig:par2}, which do not recapitulate the results shown in \cite{guillot2022sympathetic}.
Therefore, this set can be considered as a false positive estimation due to the high variability of the data and the ill-posedeness of the calibration problem. Thus, in what follows, we focus our study on the parameters sets 2 and 3 of Table \ref{tab:allpar} when $\mu_1>0$ .

\subsubsection{Case when $\mu_1>0$}

In this section, we analyze the impact of axons on tumorigenesis using parameters sets 2 and 3 from Table \ref{tab:allpar}. 
Our model posits that sympathetic axons exert an anti-tumoral effect. 
Therefore, denervation of $A_1$ leads to an increased proportion of cancerous cells revealed by a positive value for invasive potential, whether denervation occurs early or late (see Figure \ref{fig:den} and Table \ref{tab:indic_denerv}). 
Conversely, sensory axons are implicated in promoting tumorigenesis. Thus, denervation of $A_2$ leads to a decrease in proportion of cancerous cells and a negative value for invasive potential (see Figure \ref{fig:den} and Table \ref{tab:indic_denerv}).

\noindent \paragraph{Effect of the double denervation of sympathetic and sensory axons.} 
The impact of denervating sympathetic and sensory axons proves to be complex and subject to diverse outcomes across the two distinct parameter sets, reflecting biological variability.\\

\paragraph{Outcome $1$.} For both set of parameters, early denervation of sympathetic and sensory axons result in a shift from an initial pro-tumoral effect of denervation (increase proportion of cancer cell compare to control condition) to a late anti-tumoral effect of denervation (decrease proportion of cancer cell compare to control condition). This "shift effect" is highlighted by the crossing of the green denervated and the black control curves in Figure \ref{fig:den}.\\

\paragraph{Outcome $2$.} For both set of parameters, late denervation of both axons types inhibits tumorigenesis. In Figure \ref{fig:den}, the green denervated curves are below the control ones meaning that pathological cancerous cells are present at a lower level compared to the control scenario.\\

\paragraph{Outcome $3$.} However significant difference emerges between the two sets. For early denervation, invasive potential is positive for parameter set 2 and negative for parameter set 3.
This suggests a higher net production of cancerous cells over the entire time span for set 2 compared to set 3 (higher area between the green and black curves before the “shift effect” and a lower area after the curves crossing, see Figure \ref{fig:den}).
This difference between the two sets reflects the dominance of a specific axon type (sympathetic for set 2 and sensory for set 3) in pancreatic cancer regulation.\\

{
\noindent The role of axons in cancer progression can be effectively established using the mathematical model and in silico denervation experiments. 
However, the relevance of in silico results is partly correlated with the calibration of the model. 
This calibration becomes more complex when the data is scarce and highly variable. 
Consequently, the parameter sets obtained during calibration exhibit different dynamics and denervation effects. 
Moreover, it is important to note that both the timing of denervation and the duration of observation post-denervation are critical factors in understanding the denervation's effects. 
}

\subsection{In silico denervation for varying times}\label{sec:heatmap}
Using the mathematical model, we investigate the temporal dynamics by denervating both sympathetic and sensory axons at various time points. 
Subsequently, we construct heatmaps at different observation times
to have an evolving illustration of the invasive potential.
\\

\subsubsection{Heatmap construction and interpretation}

{Studying experimentally the effect of different types of denervation can be time consuming and expensive.
However, the mathematical model allows to perform a large number of in silico denervations and hence gives insights on the role of the axons in tumorigenesis. 
In this Section, using the parameters sets 2 and 3 of Table \ref{tab:allpar}, we perform denervations varying the time of denervation of both axons independently and we quantify their pro- or anti-tumoral effect at different times during the tumorigenesis process. 
The results of the in silico denervations are illustrated by the two sequences of heatmaps of Figures \ref{fig:heatmap_0} and \ref{fig:heatmap_1}.}

\noindent First, we introduce the finite sequence $(s_i)\in  \{ 5, \, 10, \hdots, \, 70\}$ which corresponds to the observation times for the effect of denervations. 
Then, we denote by $t_{A_1}$ (resp. $t_{A_2}$) the time of denervation of the sympathetic axons $A_1$ (resp. the sensory axons $A_2$).
Each heatmap of Figure \ref{fig:heatmap_0} or \ref{fig:heatmap_1} corresponds to a grid where the y-axis correponds to the different values taken by $t_{A_1}$ and the x-axis to those taken by $t_{A_2}$.
The following holds 
$$t_{A_j}\in [0,s_i], \quad j=1,2,$$
since it makes no sense to look at the effect of denervation before denervation has taken place.\\
The cell from the heatmap $s_i$ located at $(t_{A_1}, t_{A_2})$ corresponds to the invasive potential at time $s_i$ subject to the denervation of sympathetic axons at time $t_{A_1}$ and subject to the denervation of sensory axons at time $t_{A_2}$ (cf. \eqref{eq:indic} and Section \ref{sec:indic}). 
{We keep track of the denervation time in the notation of the index evaluated at time $s_i$ as follows:}
%We denote this value 
$\mathcal{I}\left(s_i;t_{A_1},t_{A_2}\right)$.
Hence, two kind of results are observed:
\begin{itemize}
    \item if $\mathcal{I}(s_i;t_{A_1},t_{A_2})>0$, then the cell is colored in blue.
    Hence, the aggressiveness of the tumor 
    %and the risk of metastasis 
    is encoded by this color.
    The darker the blue, the stronger the pro-tumoral effect of the denervation. 
    \item If $\mathcal{I}(s_i;t_{A_1},t_{A_2})<0$, then the cell is colored in brown.
    Hence, the possibility of tumor remission through denervation is encoded by this color. 
    The darker the brown, the stronger the anti-tumoral effect of the denervation.
\end{itemize}
}

Moreover, the sequence of heatmaps illustrates the dynamical evolution of the invasive potential. 
For instance, the red, blue, green, and black stars on the heatmaps of Figure \ref{fig:heatmap_0} (resp. Figure \ref{fig:heatmap_1}) correspond to the invasive potential of the early denervated and control curves of the parameters set 2 (resp. set 3) of Table \ref{tab:allpar} illustrated by the red, blue, green and black curves of Figure \eqref{fig:par0} (resp. Figure \eqref{fig:par1}).

\begin{figure}[ht!]
\includegraphics[width=0.9\textwidth]{./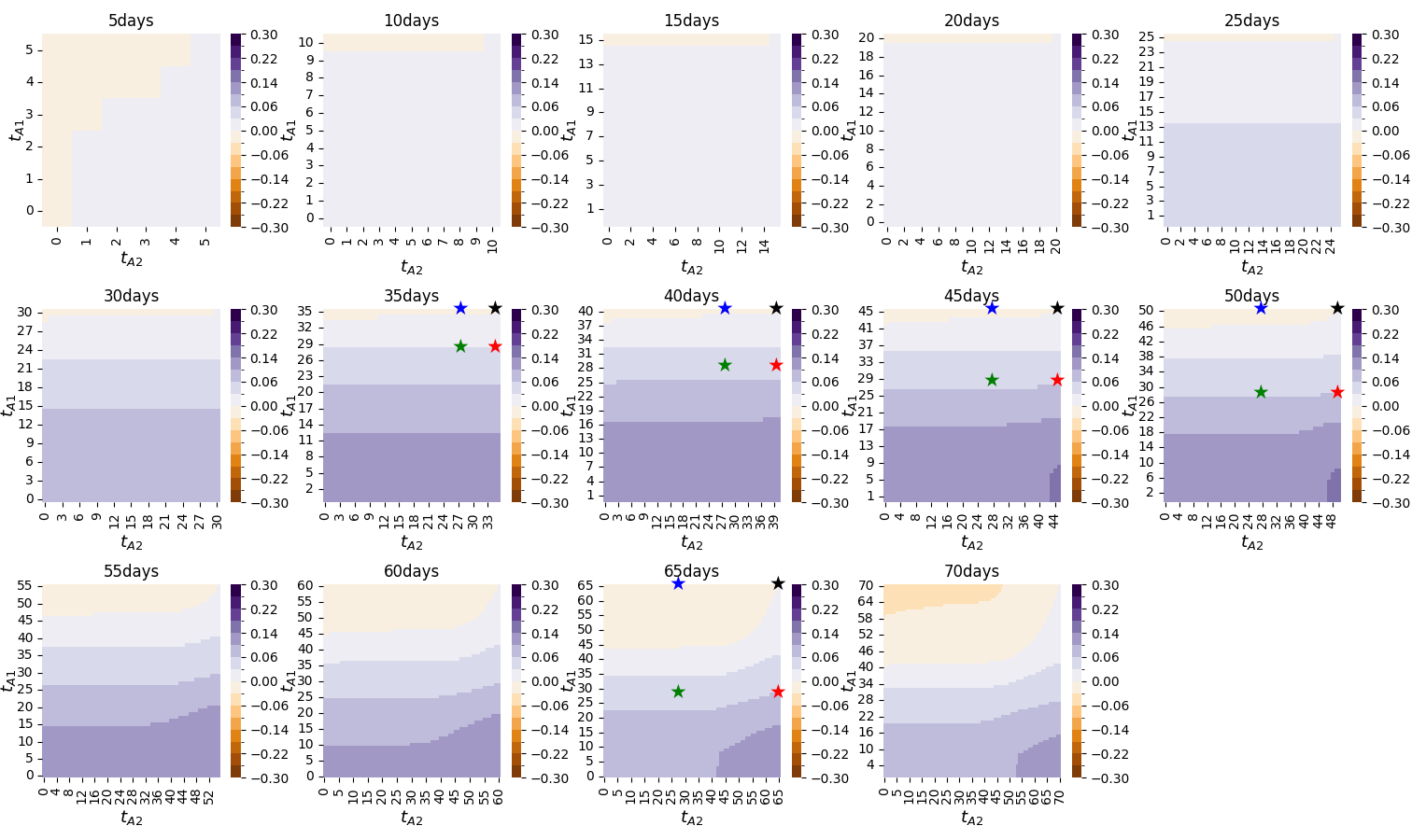}
\centering
\caption{{Evolution of the invasive potential with respect to varying denervation times for the parameters set $2$ from Table \ref{tab:allpar}. The times of denervations of $A_1$ (resp. $A_2$) are indicated on the y-axis (resp. the x-axis). The observation time $s_i $ of the invasive potential is indicated in days above each heatmaps.}
}\label{fig:heatmap_0}
\end{figure}

\begin{figure}[ht!]
\includegraphics[width=0.9\textwidth]{./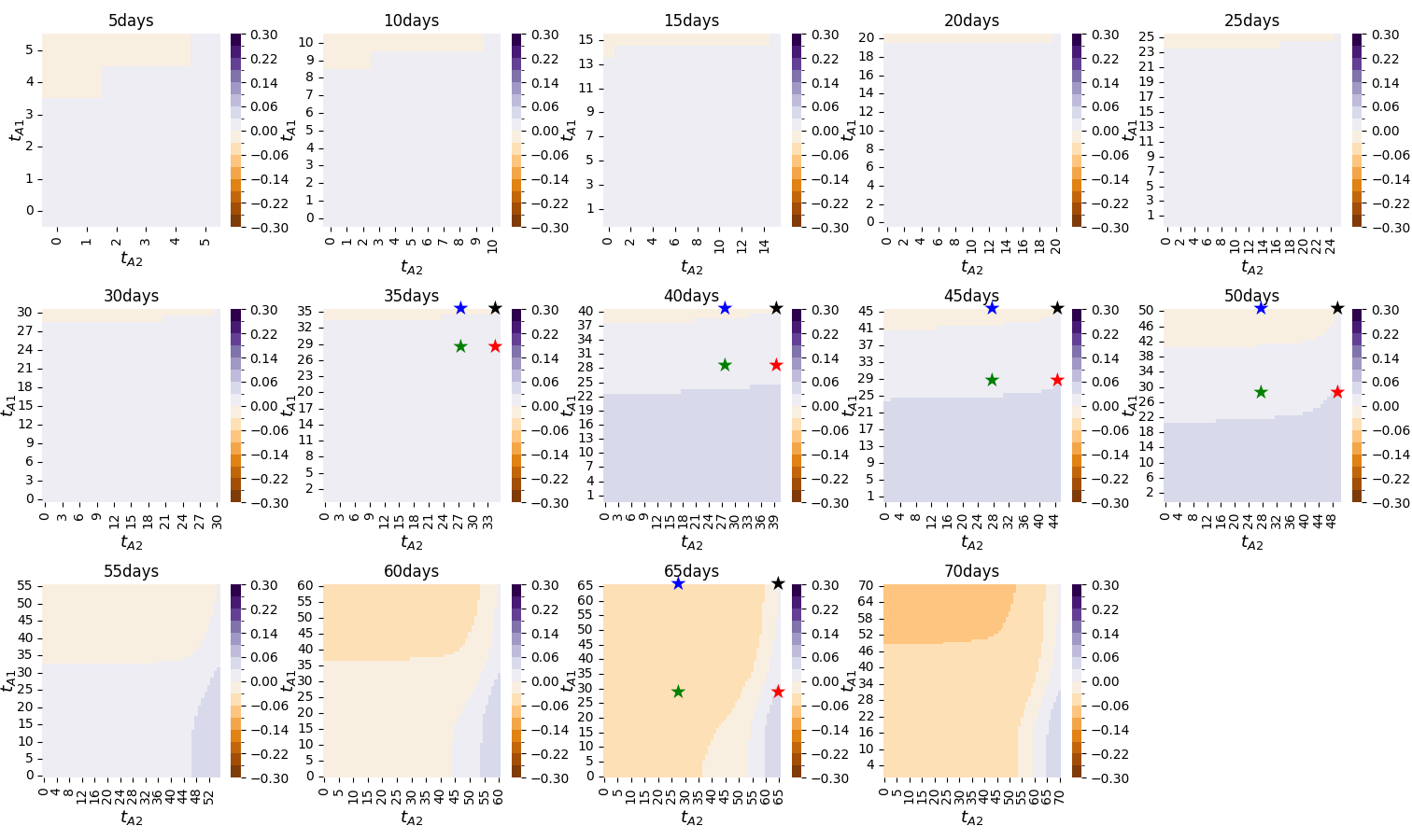}
\centering
\caption{{Evolution of the invasive potential with respect to varying denervation times for the parameters set $3$ from Table \ref{tab:allpar}. The times of denervations of $A_1$ (resp. $A_2$) are indicated on the y-axis (resp. the x-axis). The observation time $s_i $ of the invasive potential is indicated in days above each heatmaps.}
}\label{fig:heatmap_1}
\end{figure}
%------------------------------------------------------------------------------------
\subsubsection{Impact of the time-varying denervations on the evolution of the tumorigenesis.}
{In the following, a more \textit{qualitative} approach of model validation is proposed. 
A detailed description of the interesting elements of the model dynamics subject to time-varying denervations is given.
We discuss the possibility of establishing the best time and strategy of denervation.}\\

    {\textbf{Different times of remodeling of sympathetic axons explain the different apparition times of the pro-tumoral effect of denervations.}}
    The pro-tumoral effect of denervation starts to be observed at $s_i=25$ days for parameters set 2 (cf. Figure \ref{fig:heatmap_0}) and at $s_i=40$ days for parameters set 3  (cf. Figure \ref{fig:heatmap_1}) due to the darkening of the blue color in the lower halves of the heatmaps.
    The difference in the time of observation $s_i$ can be explained due to the difference in the time of arrival of sympathetic axons and the difference in the impact they have on cancer growth. 
    By referring to Table \ref{tab:allpar}, the parameters $r_{A1}$ and $\mu_1$ are larger for set 2 than set 3, explaining the larger and earlier effect that sympathetic axons denervation exhibits on tumor growth as seen in Figure \ref{fig:heatmap_0} compared to Figure  \ref{fig:heatmap_1}. \\
    
 \textbf{{In the early stages of the tumorigenesis, the strong pro-tumoral effect of denervation is associated to the early denervation of sympathetic axons.} 
    } 
    For $s_i \leq 50$, the denervations associated with a strong pro-tumoral effect are localized in the lower halves of the heatmaps for both sets of parameters, corresponding to early denervation of sympathetic axons.
    The time of denervation of sensory axons has almost no effect in the lower halves of the heatmaps, as the blue region extends homogeneously along the x-axis . 
    The fact that the denervations of sensory axons do not play a significant role in this time range can be explained due to the late remodeling of sensory axons and the delay between the denervation time of this type of axons and its impact on the dynamics of the model.
    Thus, in that time range, all types of denervations are  mainly impacted by the denervation of the sympathetic axons. \\
    
 \textbf{{In the late stages of the tumorigenesis,} the impact of sensory axons' denervation becomes significant as the dynamics for both parameters sets undergo the shift effect described in the previous section. }
For $s_i \geq 55$, the sensory axons starts to display its inhibiting effect on cancer cells which can be seen through expanding brown areas or lightening blue areas on the upper halves of the heatmaps. 
It illustrates the antagonistic role of both types of axons: the sympathetic axons playing an anti-tumoral role on the early stages of tumorigenesis and the sensory axons playing a pro-tumoral role on the late stages of the tumorigenesis.\\

\textbf{The sympathetic axons' (resp. sensory axons') impact on cancer growth dominates in Figure \ref{fig:heatmap_0} (resp. Figure \ref{fig:heatmap_1}). } 
On the one hand, the regulation of the sympathetic axons plays the most significant role on tumorigenesis for parameters set 2 since the denervation of this type of axons highlights a strong deviation of its dynamic from the control one (illustrated by the opacity level of the blue region in Figure \ref{fig:heatmap_0}).
On the other hand, the regulation of the sensory axons dominates for the parameters set 3 since the anti-tumoral effect of denervation becomes more and more significant as time passes. The brown area in Figure \ref{fig:heatmap_1} expands from the upper region at $s_i=55$ days to three-quarter of the heatmap's area at $s_i=70$ days  (everywhere but the south-east corner corresponding to the latest denervation of sensory axons and the earliest denervation of sympathetic axons).\\

\textbf{The effect of denervation takes time to be seen.}
Although an initial pro-tumoral effect of the denervation of {both types of axons} %the splanchnic nerve 
is always seen due to the {dominant} blue areas in the heatmaps for small/intermediate values of $s_i$, a later anti-tumoral effect of denervation is also observed when compared to the control curve after the first remodeling of sensory axons.
If the effect of sensory axons {on cancer growth is strong enough}, then the arrival of sensory axons {may} be sufficient {for} the overall anti-tumoral effect caused by it's denervation {to} compensate the previous pro-tumoral effect 
that has taken place.
{In Figure \ref{fig:heatmap_1}, if we compare the heatmaps at $s_i= 50$ and $s_i=60$ days,} we see areas going from blue to brown.
In that case, the final state of the disease (for large $s_i$) is a reduction of tumor, contrary to what can be observed initially for small $s_i$. 
In Figure \ref{fig:heatmap_0}, the stronger pro-tumoral effect resulting from the denervation of $A_1$ takes longer to be compensated by the anti-tumoral effect of the denervation of $A_2$.
However, it is possible to conjecture that the dynamics obtained of parameters set 2 at a later observation time ($s_i\gg 70$ days) will be similar to the ones displayed in the last frame Figure \ref{fig:heatmap_1} (parameters set 3 at observation time 70 days).

%=====================================================================
\section{Conclusion}

This study presents mathematical tools to model and simulate the joint effect of PNS axons (promoting and/or inhibiting cancer progression and proliferation) in pancreatic cancer tumorigenesis. 
It extends the previous model presented in \cite{chauvet2023tumorigenesis} by considering the cell phenotype as a continuous variable.
This new mathematical formalism provides a more accurate description of tumour progression and associated neuroplastic changes. 
The mathematical model is then finely calibrated to the available data by measuring the goodness of fit between the model output and the biological knowledge with a two-dimensional criterion and by selecting the relevant calibrated parameter sets with a multi-stage quasi-Monte Carlo algorithm. 
In addition, a quantitative indicator of the balance between the two opposing pro- and anti-tumor effects of denervation is calculated numerically.
This balance can be visualised over time for several parameter sets reflecting biological variability.\\

\noindent From a biological point of view, the mathematical model reconciles all the biological data found in the literature and provides an explanation for the opposite effects of surgical denervation performed at early and late stages of PDAC.
Specifically, the mathematical model shows that when sympathetic axons increase tumor growth, the model does not replicate the biological data from the literature.
This rules out the hypothesis of a functional switch of the sympathetic nervous system during tumorigenesis.
However, when sympathetic axons have a consistently inhibitory effect on tumor growth, the model recapitulates all the data. 
This can be explained by the "shift effect" from a global harmful to a protective role of the PNS, resulting from the compensation of the pro-tumor effect of denervating sympathetic axons by the anti-tumor effect of denervating sensory axons.
\\

\noindent In addition, the model identifies different sets of parameters that may reflect biological variability of tumor innervation.
These different sets highlight the importance of the level of sympathetic inhibition on tumor growth. 
For example, strong sympathetic inhibition on tumor growth masks the anti-tumor effect of sensory axon denervation.
In this case, targetting the PNS may have little or no benefit for PDAC treatment. 
However, if the sympathetic inhibition of tumor growth is weak, the anti-tumor effect of the sensory denervation becomes significant. 
In this scenario, targetting sensory axons would be beneficial. 
However, it is important to note that the beneficial effects are observed with a delay. 
Indeed, the model highlights a latency period between denervation and observable benefits.
Therefore, in terms of clinical applications for the treatment of pancreatic cancer, future knowledge of both the density of tumor innervation and the activity of sympathetic axons will be crucial for adapting the denervation strategy.
This highlights the importance of a patient-specific approach to the timing of nerve block in the treatment of pancreatic cancer.
\\

\noindent This study also highlights the fact that tumorigenesis is a constantly evolving process with inherent biological variability.
To account for the biological variability and complexity of this process, the temporal components of the data need to be further investigated. 
On the one hand, having additional data or at least longitudinal components as data helps to reduce the uncertainty of the parameter estimation problem. 
On the other hand, the predictive quality of the model also depends to a large extent on the data used to calibrate it.\\

\noindent An improvement in the mathematical model could be to include elements of the tumor microenvironment.
The complexity of pancreatic cancer tumorigenesis and its neuroregulation is also related to its interactions with the tumor microenvironment. 
A first step could be to incorporate spatial structure into the model. 
The spatio-temporal dynamics of such a model would certainly make it difficult to establish macroscopic properties and then draw biologically relevant conclusions.
Conversely, a more realistic model will allow more precise investigation of biophysical properties, such as tissue stiffness and its effect on axon growth, or more complex regulatory processes, such as additional regulation by the immune system.\\

\paragraph{Fundings} This research was supported by Centre National de la Recherche Scientifique (CNRS), France; Aix-Marseille University
(AMU), France; 
grant from Institut National Du Cancer (INCa), France,
Fondation Arc and Ligue contre le cancer (PAIR Pancreas, project title:
’’The impact of axonogenesis in pancreatic cancer’’, convention number
186738) to F.M and F.H.

\paragraph{CRediT} \insertcreditsstatement

%=====================================================================

%=====================================================================

\begin{appendix}
\section{Results for the well-posedness}\label{sec:app_wellpo}
In this section, we give the mathematical arguments used to prove the well-posedness of the model.
{The proof of Theorem \ref{thm:well_po} will require four steps. First, we prove that the map $\mathcal{S}$ is well defined, i.e the system \eqref{eq:model_abstract_linear} admits a unique solution. 
Second, we prove that the map $\mathcal{S}$ is a contraction. 
Third, we use a bootstrap argument to conclude the existence of a solution of \eqref{eq:model_abstract}.
The final step is the uniqueness result. All these steps relies on the following Lemmas.
}
\begin{lemma}
Let $T>0$, $\theta \in \mathcal{C}([0,T])$ such that Hypothesis \ref{hyp:cond_wellpo_theta_r} holds and $r>0$. {The Cauchy problem :
\begin{equation}
 \left\lbrace
\begin{array}{c@{}l}
\frac{d}{dt} y(t) & =r y(t)\left(1 - y(t)\right)\left(\frac{y(t)}{\theta(t)} -1 \right) ,\, t\in[0,T],\\
\\
y(0) &  = y_0 \in (0,1),
\end{array}
\right.
\label{eq:logistic_allee}
\end{equation}
admits a unique solution $y\in \mathcal{C}^1([0,T])$ associated to  $\theta \in \mathcal{C}([0,T])$ and $y(t) \in (0,1),$ $\forall t\in[0,T]$.}\\
Moreover, assume $y_1$ (resp. $y_2$) is the unique solution of \eqref{eq:logistic_allee} associated to the Allee effect term $\theta_1 \in \mathcal{C}([0,T])$ $\left(\text{resp. } \theta_2 \in \mathcal{C}([0,T])\right)$ where $y_1(0)  = y_2(0)= y_0\in(0,1)$. Let $\theta_i \in \mathcal{C}([0,T]) $ such that Hypothesis \ref{hyp:cond_wellpo_theta_r} holds and $i=1,\, 2$ then { 
    $$|y_1(t) - y_2(t)| \leq C_1(\theta_{-})rte^{C_2(\theta_{-}, \theta_{+})t} \| \theta_1 -\theta_2\|_{\mathcal{L}^\infty ([0,t])},$$
where $C_1>0$ and $C_2>0$ only depend on $\theta_{-}>0$ and $\theta_{+}>1$.}     
\label{lem:logistic_allee}
\end{lemma}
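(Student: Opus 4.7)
The plan is to handle the two assertions separately and then assemble them.

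For existence and uniqueness of the Cauchy problem \eqref{eq:logistic_allee}, I would write the right-hand side as $F(t,y) \defeq r y(1-y)\bigl(y/\theta(t) - 1\bigr)$. Thanks to Hypothesis \ref{hyp:cond_wellpo_theta_r}, $\theta \in \mathcal{C}([0,T])$ stays in $[\theta_-,\theta_+]$ with $\theta_->0$, so $F$ is continuous in $t$ and smooth (in particular locally Lipschitz) in $y$. Cauchy--Lipschitz yields a unique maximal $\mathcal{C}^1$ solution. The key observation is that $y\equiv 0$ and $y\equiv 1$ are constant solutions of the ODE, so by uniqueness the trajectory starting at $y_0\in(0,1)$ cannot cross either equilibrium; hence $y(t)\in(0,1)$ on the whole maximal interval. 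This uniform a priori bound rules out blow-up and extends the solution to all of $[0,T]$, and the $\mathcal{C}^1$ regularity is read off from the ODE itself.

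For the stability estimate, set $w(t) \defeq y_1(t) - y_2(t)$, so that $w(0)=0$ and
\begin{equation*}
\frac{dw}{dt} = \bigl[F_{\theta_1}(y_1) - F_{\theta_1}(y_2)\bigr] + \bigl[F_{\theta_1}(y_2) - F_{\theta_2}(y_2)\bigr],
\end{equation*}
with the obvious shorthand $F_\theta(y)\defeq r y(1-y)(y/\theta - 1)$. The first bracket is handled by the mean value theorem: on $(0,1)\times[\theta_-,\theta_+]$ the derivative $\partial_y F_\theta$ is bounded by a constant $r\,L(\theta_-,\theta_+)$ depending only on $\theta_\pm$ (since $y$ and $1-y$ lie in $(0,1)$ and $1/\theta \le 1/\theta_-$), giving a contribution $\leq r L\,|w(t)|$. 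For the second bracket I compute explicitly
\begin{equation*}
F_{\theta_1}(y_2) - F_{\theta_2}(y_2) \;=\; r\,y_2^{\,2}(1-y_2)\,\frac{\theta_2(t) - \theta_1(t)}{\theta_1(t)\theta_2(t)},
\end{equation*}
which, using $y_2\in(0,1)$ and $\theta_i\geq\theta_-$, is bounded in modulus by $\tfrac{r}{\theta_-^{2}}\|\theta_1-\theta_2\|_{\mathcal{L}^\infty([0,t])}$. Combining the two bounds and applying Grönwall's inequality (integral form) to $|w(t)|$ delivers
\begin{equation*}
|y_1(t)-y_2(t)| \;\leq\; \frac{r}{\theta_-^{2}}\,\|\theta_1-\theta_2\|_{\mathcal{L}^\infty([0,t])}\int_0^t e^{rL(t-s)}\,ds \;\leq\; C_1\,r\,t\,e^{C_2 t}\,\|\theta_1-\theta_2\|_{\mathcal{L}^\infty([0,t])},
\end{equation*}
which is the claimed form with $C_1 = 1/\theta_-^2$ and $C_2 = rL(\theta_-,\theta_+)$ (the factor $r$ entering the exponential is absorbed into the constant $C_2$ once we fix the model's growth rate).

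Neither step is technically deep. The only point that requires a touch of care is the trapping argument that keeps $y$ away from $\{0,1\}$: one must invoke uniqueness of the ODE at those equilibria to avoid having to prove quantitative lower and upper bounds by hand; otherwise the bounded-derivative estimate used in the Grönwall step would fail near the endpoints, since $\partial_y F_\theta$ is only uniformly controlled on the open interval via the fact that $y$ and $1-y$ never hit zero. Once this trapping is secured, the Grönwall mechanism is routine.
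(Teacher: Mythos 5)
Your proposal is correct and follows essentially the same route as the paper: Picard--Lindelöf plus the trapping argument at the equilibria $y\equiv 0$ and $y\equiv 1$ for global existence in $(0,1)$, then the same splitting of $F_{\theta_1}(y_1)-F_{\theta_2}(y_2)$ into a $\theta$-perturbation term (computed explicitly and bounded via $\theta_i\geq\theta_-$) and a $y$-increment term (mean value theorem), followed by Grönwall. The only differences are cosmetic — the paper uses the sharper bound $y^2(1-y)\leq 4/27$ where you use $\leq 1$, and works in integral form throughout — and you rightly note that the constant in the exponential actually carries a factor of $r$, a looseness the paper's own statement shares.
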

\begin{proof}[Lemma \ref{lem:logistic_allee}]
The local existence is a direct consequence of the Picard--Lindel\"of Theorem. Moreover, since $y(t) =0 $ and $y(t) = 1 $ are two stationary solutions, it implies that the solution with initial datum $y_0$ stays in$(0,1)$ and thus is globally defined. We focus on the proof of the second item.
We introduce the function $\bar{f}:\, (y, \theta) \in(0,1)\times \mathbb{R}_+^* \mapsto r y\left(1 - y\right)\left(\frac{y}{\theta} -1 \right)$.
Let us note that 
\begin{align*}
    |y_1(t) - y_2(t)| & \leq \int_0^t \left|\bar{f}(y_1(s), \theta_1(s)) -\bar{f}(y_2(s), \theta_2(s))\right|ds,\\ 
    & \leq  \int_0^t \left|\bar{f}(y_1(s), \theta_1(s)) -\bar{f}(y_1(s), \theta_2(s))\right|ds + \int_0^t \left|\bar{f}(y_1(s), \theta_2(s)) -\bar{f}(y_2(s), \theta_2(s))\right|ds.
\end{align*}
On the one hand, we have
\begin{align*}
    \int_0^t \left|\bar{f}(y_1(s), \theta_1(s)) -\bar{f}(y_1(s), \theta_2(s))\right|ds & \leq \int_0^t r y_1(s)^2(1- y_1(s))\left|\frac{1}{\theta_1(s)} - \frac{1}{\theta_2(s)}  \right|ds,\\
    &\leq \frac{\|\theta_1 -\theta_2\|_{\mathcal{L}^\infty ([0,t])}}{\min\limits_{\sigma \in [0,t] }|\theta_1(\sigma)\theta_2(\sigma)|} \int_0^t r y_1(s)^2(1- y_1(s)) ds.
\end{align*}
Moreover, the mapping $y\in(0,1) \mapsto y^2(1-y) $ admits a unique maximum in $y=2/3$. 
Hence, we get 
\begin{equation}
    \int_0^t \left|\bar{f}(y_1(s), \theta_1(s)) -\bar{f}(y_1(s), \theta_2(s))\right|ds \leq 
    \frac{4r t}{27\min\limits_{s\in [0,t]}|\theta_1(s) \theta_2(s)|} \| \theta_1 -\theta_2\|_{\mathcal{L}^\infty ([0,t])}.
    \label{eq:lem_bound1}
\end{equation}
On the second hand, we have
\begin{align*}
    \int_0^t \left|\bar{f}(y_1(s), \theta_2(s)) -\bar{f}(y_2(s), \theta_2(s))\right|ds & \leq \int_0^t \frac{|\bar{f}(y_1(s), \theta_2(s)) -\bar{f}(y_2(s), \theta_2(s))|}{|y_1(s) -y_2(s)|}|y_1(s) -y_2(s)|ds,\\
    &\leq \int_0^t |\partial_y \bar{f}(c,\theta_2(s))| |y_1(s) -y_2(s)| ds,
\end{align*}
where $c\in(0,1)$ is a constant coming from the mean value theorem. 
{Moreover, we have 
\begin{align*}
    \left|\left.\partial_y \bar{f}(y,\theta)\right|_{y=c}\right| & = \left|r\left[-1 + 2 \left(1+\tfrac{1}{\theta}\right)c - \tfrac{3}{\theta}c^2 \right]\right|,\\
    &\leq r\max\left(\frac{(\theta -1)^2}{3 \theta} +\tfrac{1}{3},1 \right).
\end{align*}
Hence, we get 
\begin{equation}
    \int_0^t \left|\bar{f}(y_1(s), \theta_2(s)) -\bar{f}(y_2(s), \theta_2(s))\right|ds \leq \int_0^t r\max\left( \frac{(\theta_2(s) -1)^2}{3 \theta_2(s)} +\tfrac{1}{3},1\right) |y_1(s) -y_2(s)|ds.
    \label{eq:lem_bound2}
\end{equation}
Finally, using the bounds \eqref{eq:lem_bound1}, \eqref{eq:lem_bound2} and Gr\"onwall's inequality, we get the following result
$$|y_1(t) - y_2(t)| \leq \frac{4r t\text{e}^{\int_0^t r\max\left( \frac{(\theta_2(\sigma) -1)^2}{3 \theta_2(\sigma)} +\tfrac{1}{3},1\right)d\sigma}}{27\min\limits_{s\in [0,t]}|\theta_1(s) \theta_2(s)|} \| \theta_1 -\theta_2\|_{\mathcal{L}^\infty ([0,t])}.$$ }
\end{proof}
%---------------------------------------------------
\begin{lemma}
Let $T>0$, $r\in \mathcal{C}([0,T])\bigcap \mathcal{L}^1([0,T])$  and $K\in \mathbb{R}_+^*$. {The Cauchy problem :}
%We denote $x\in \mathcal{C}^1([0,T])$ such that :
\begin{equation}
 \left\lbrace
\begin{array}{c@{}l}
\frac{d}{dt} x(t) & =r(t) x(t)\left(1 - \frac{x(t)}{K} \right) ,\, t\in[0,T),\\
\\
x(0) &  = x_0 \in (0,K),
\end{array}
\right.
\label{eq:logistic}
\end{equation}
{admits a unique solution associated to $r\in \mathcal{C}([0,T])\bigcap \mathcal{L}^1([0,T])$ given by 
$$x(t) = \frac{K}{1+ Ae^{-\int_0^t r(s) ds}}, $$
where $A=\frac{K-x_0}{x_0}.$}\\
{Moreover, assume $x_1$ (resp. $x_2$) is the unique solution of \eqref{eq:logistic} associated to the growth rate $r_1 \in \mathcal{C}([0,T])\bigcap \mathcal{L}^1([0,T])$ $\left(\text{resp. } r_2 \in \mathcal{C}([0,T])\bigcap \mathcal{L}^1([0,T])\right)$ where $r_i(t)>0,\, \text{ for } t\in [0,T] $ and $i=1,\, 2$ then 
    $$|x_1(t) - x_2(t)| \leq \frac{K A t}{\left(1+A\min\limits_{i=1,2}e^{-t\|r_i \|_{\mathcal{L}^\infty ([0,t])}} \right)^2} \| r_1 -r_2\|_{\mathcal{L}^\infty ([0,t])}. $$
}
\label{lem:logistic}
\end{lemma}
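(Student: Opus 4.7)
The strategy is to exploit the fact that the logistic ODE reduces to a linear one after a standard substitution, giving both existence/uniqueness and an explicit formula, and then to use that explicit formula directly to derive the stability estimate.

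\textbf{Step 1: Existence, uniqueness, and the explicit formula.} For any $x_0 \in (0,K)$, the map $x \mapsto r(t) x(1 - x/K)$ is continuous in $t$ and Lipschitz in $x$ on any compact subset of $\mathbb{R}$, so the Picard--Lindelöf theorem yields a unique local $\mathcal{C}^1$ solution. Because $x \equiv 0$ and $x \equiv K$ are stationary solutions, uniqueness forces any solution starting in $(0,K)$ to remain in $(0,K)$, ruling out blow-up and giving a global solution on $[0,T]$. To obtain the closed form, I introduce the change of variable $y(t) \defeq K/x(t) - 1$. A short computation gives $y'(t) = -(K/x(t)^2) x'(t) = -r(t)(K/x(t)-1) = -r(t) y(t)$, a linear scalar ODE whose solution is $y(t) = A \exp\bigl(-\int_0^t r(s)\,ds\bigr)$ with $A = y(0) = (K-x_0)/x_0$. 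Inverting yields the announced formula.

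\textbf{Step 2: Stability with respect to the growth rate.} Denote $R_i(t) \defeq \int_0^t r_i(s)\,ds$ so that $x_i(t) = K/(1 + A e^{-R_i(t)})$. A direct computation gives
\begin{equation*}
x_1(t) - x_2(t) \;=\; \frac{K A\,\bigl(e^{-R_2(t)} - e^{-R_1(t)}\bigr)}{\bigl(1+A e^{-R_1(t)}\bigr)\bigl(1+A e^{-R_2(t)}\bigr)}.
\end{equation*}
I then estimate the numerator and the denominator separately. For the numerator I use that $u \mapsto e^{-u}$ is $1$-Lipschitz on $\mathbb{R}_+$, together with $R_i(t) \ge 0$, to get $|e^{-R_1(t)} - e^{-R_2(t)}| \le |R_1(t) - R_2(t)| \le t \, \|r_1 - r_2\|_{\mathcal{L}^\infty([0,t])}$. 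For the denominator I use the elementary inequality $(1+a)(1+b) \ge (1 + \min(a,b))^2$ for $a,b \ge 0$, combined with the obvious bound $e^{-R_i(t)} \ge e^{-t\|r_i\|_{\mathcal{L}^\infty([0,t])}}$, which yields
\begin{equation*}
\bigl(1+A e^{-R_1(t)}\bigr)\bigl(1+A e^{-R_2(t)}\bigr) \;\ge\; \Bigl(1 + A \min_{i=1,2} e^{-t\|r_i\|_{\mathcal{L}^\infty([0,t])}}\Bigr)^{\!2}.
\end{equation*}
Combining these two bounds directly gives the claimed inequality.

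\textbf{Anticipated difficulty.} There is really no serious obstacle here; this is a computation with a linearizable scalar ODE. The only point requiring a little care is the lower bound on the denominator: one must check that using $\min_i e^{-t\|r_i\|_{\mathcal{L}^\infty}}$ (rather than, say, the product) is consistent and gives the correct power of $(1 + A \min_i \cdots)$. The inequality $(1+a)(1+b) \ge (1+\min(a,b))^2$ handles this cleanly, so the proof reduces to the two estimates above plus the explicit resolution in Step 1.
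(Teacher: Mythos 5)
Your proposal is correct and follows essentially the same route as the paper: Picard--Lindel\"of plus the invariance of $(0,K)$ for well-posedness, then the explicit difference formula with the numerator controlled by the $1$-Lipschitz bound $|e^{-x}-e^{-y}|\leq|x-y|$ on $\mathbb{R}_+$ and the denominator bounded below by $\bigl(1+A\min_{i}e^{-t\|r_i\|_{\mathcal{L}^\infty([0,t])}}\bigr)^2$. The only cosmetic difference is that you derive the closed form via the substitution $y=K/x-1$, whereas the paper simply states it.
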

\begin{proof}[Lemma \ref{lem:logistic}]
{The Picard--Lindel\"of Theorem ensures that the Cauchy problem \eqref{eq:logistic} admits a unique solution that stays in $(0,K)$}. 
We focus now on the proof of the second item.
In the following, we use the notation $n_i(t) = e^{-\int_0^t r_i(s)ds}$ for $i=1,2$ and $t\in [0,T]$. Hence, we have 
\begin{align*}
    |x_1(t) - x_2(t)| & \leq K \frac{|A (n_1(t) - n_2(t)) |}{|(1+ A n_1(t)) (1+ An_2(t)|}\\
                    & \leq \frac{KA}{\left(1+A\min\limits_{i=1,2}n_i(t)\right)^2}  \int_0^t |r_1(s) -r_2(s)|ds\\
                    & \leq \frac{K A t}{\left(1+A\min\limits_{i=1,2}e^{-t\|r_i \|_{\mathcal{L}^\infty ([0,t])}} \right)^2} \| r_1 -r_2\|_{\mathcal{L}^\infty ([0,t])}
\end{align*}
since the following inequality holds for $(x, y) \in \mathbb{R_+}\times \mathbb{R_+}$ 
$$|e^{-x} - e^{-y}| \leq |x-y| .$$
\end{proof}
%-------------------------------------------------
{In order to be able to state the next Lemma on well-posedness of the linear PDE, we introduce the characteristics associated to  \eqref{eq:model_abstract_linear}} by :
\begin{equation}
    \left\lbrace
    \begin{array}{c@{}l}
        \frac{d}{ds} X(s,t,x;\mathcal{X}) &= f(s, X(s,t,x;\X);\X),\, s\in \mathbb{R} \\
        X(t,t,x;\X) &=x. 
    \end{array}
    \right.
    \label{eq:characteristic}
\end{equation}

\begin{lemma}\label{lem:wellpo_linear_pb}
Let $\mathcal{X}\in\mathcal{P}$  (set defined in \eqref{eq:admissible_set}) be given and assume that the Hypothesis \ref{hyp:initial_cond_wellpo} on $Q_0 $ holds. Let $f$ and $c$ defined in \eqref{eq:model_abstract_linear} such that the Hypotheses \ref{hyp:cond_advec_wellpo} and \ref{hyp:cond_growth_wellpo} hold.
Then the PDE defined in \eqref{eq:model_abstract_linear} has unique solution $u\in \mathcal{C}^1\left([0,T], \mathcal{C}^1(\Omega)\cap \mathcal{L}^1(\Omega)\right)$ given by
$$u(t,x;\mathcal{X})=Q_0(X(0,t,x;\mathcal{X}))\times \exp\left( \int_0^t -c(\sigma,X(\sigma,t,x;\mathcal{X}))d\sigma \right) $$
{and we have
\begin{enumerate}
\item {$\int_\Omega Q_0(x)dxe^{-C_1 t} \leq \|u(t)\|_{\mathcal{L}^1(\Omega)} \leq \int_\Omega Q_0(x)dx e^{C_1 t}$, for $t\in [0,T]$},
\item $\|\partial_x u(s)\|_{\mathcal{L}^1(\Omega)} \leq C_2 t e^{C_1 t}$, for $t\in [0,T]$
\end{enumerate}
with $C_i >0$ for $i=1,2$ and {$C_1=r_+ \sup\limits_{0\leq t \leq T}\left|h(A_1(t),A_2(t))-N(t)\right|$.}\\
}
Moreover, let $\mathcal{X}_i = (Q_i, A_1^i, A_2^i) \in \mathcal{P}$  and assume that $u(t,x;\mathcal{X}_i)$ is the unique solution associated to $\mathcal{X}_i$ for $i=1,2$,
then 
%there exists a positive constant $C(\tau_C)>0$ such that
%$$ \int_\Omega Q_0(x)dx \leq \int_\Omega u(s,x;\mathcal{X}_i) dx \leq C(\tau_C), \quad \text{for } s\in[0,T] \text{ and } i=1,2,$$
%and 
$$\sup\limits_{0\leq t\leq T}\|u(t;\mathcal{X}_1)-u(t;\mathcal{X}_2)\|_{\mathcal{L}^1(\Omega)} \leq C(T)T \|\mathcal{X}_1 -\mathcal{X}_2\|_\mathcal{P} $$
where $0<C(T)<\infty$.
\end{lemma}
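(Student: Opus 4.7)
The plan is to establish the four claims of the lemma in sequence via the method of characteristics combined with Grönwall-type arguments; the main difficulty is the closing stability estimate, which requires comparing two transport equations whose coefficients differ but whose initial conditions agree.

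\emph{Existence, uniqueness and the explicit formula.} Fix $\mathcal{X}\in\mathcal{P}$. Hypothesis \ref{hyp:cond_advec_wellpo} makes $f(\cdot;\mathcal{X})$ globally Lipschitz in $x$, so the characteristic system \eqref{eq:characteristic} admits a unique $\mathcal{C}^1$ flow $X(s,t,x;\mathcal{X})$ on $[0,T]^2\times\Omega$, and the vanishing of $f$ on $\partial\Omega$ keeps the characteristics inside $\Omega$. Along any characteristic, the PDE reduces to the linear ODE $\frac{d}{ds}u(s,X)=-c(s,X)u(s,X)$, whose integration yields the claimed closed form; nonnegativity of $u$ follows from that of $Q_0$, and the $\mathcal{C}^1$ regularity of $Q_0$, $f$ and $c$ produces $u\in\mathcal{C}^1([0,T];\mathcal{C}^1(\Omega)\cap\mathcal{L}^1(\Omega))$.

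\emph{The two a priori $\mathcal{L}^1$ bounds.} Rewriting the PDE in conservative form $\partial_t u + \partial_x(fu) + gu = 0$ with $g:=c-\partial_x f$ and integrating over $\Omega$, the flux terms vanish thanks to $fu|_{\partial\Omega}=0$, leaving $\frac{d}{dt}\int_\Omega u\,dx = -\int_\Omega g u\,dx$. The explicit form of $g$ in Hypothesis \ref{hyp:cond_growth_wellpo} gives $\|g(t,\cdot)\|_{\mathcal{L}^\infty(\Omega)}\leq C_1 := r_+\sup_{s\in[0,T]}|h(A_1(s),A_2(s))-N(s)|$, so Grönwall applied to the two-sided inequality $-C_1\|u\|_{\mathcal{L}^1}\leq \frac{d}{dt}\|u\|_{\mathcal{L}^1}\leq C_1\|u\|_{\mathcal{L}^1}$ gives the first bound. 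Differentiating the PDE in $x$ produces a transport-reaction equation for $v=\partial_x u$ with source $-(\partial_x c)u$; the same conservative-form integration, together with the $\mathcal{L}^\infty$ bounds on $\partial_x f$ and $\partial_x c$ from Hypotheses \ref{hyp:cond_advec_wellpo}--\ref{hyp:cond_growth_wellpo} and the first bound applied to $u$, yields an inequality of the type $\frac{d}{dt}\|v(t)\|_{\mathcal{L}^1(\Omega)} \leq C_1\|v(t)\|_{\mathcal{L}^1(\Omega)} + \tilde{C}\,e^{C_1 t}$, which integrates into the stated estimate.

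\emph{Stability (the main obstacle).} Setting $u_i=u(\cdot;\mathcal{X}_i)$, $f_i=f(\cdot;\mathcal{X}_i)$, $c_i=c(\cdot;\mathcal{X}_i)$ and $w=u_1-u_2$, subtraction of the two PDEs yields
\begin{equation*}
\partial_t w + f_1\partial_x w + c_1 w = -(f_1-f_2)\partial_x u_2 - (c_1-c_2)u_2,\qquad w(0,\cdot)=0.
\end{equation*}
Switching to conservative form and multiplying by a smooth regularization of $\mathrm{sgn}(w)$ before integrating (the smoothness of $w$ provided by the existence step makes this Kru\v{z}kov-type manipulation rigorous, and the boundary condition $f_1 w|_{\partial\Omega}=0$ kills the transport flux), one obtains
\begin{equation*}
\frac{d}{dt}\|w(t)\|_{\mathcal{L}^1(\Omega)} \leq C_1\|w(t)\|_{\mathcal{L}^1(\Omega)} + \|f_1-f_2\|_{\mathcal{L}^\infty(\Omega)}\|\partial_x u_2(t)\|_{\mathcal{L}^1(\Omega)} + \|c_1-c_2\|_{\mathcal{L}^\infty(\Omega)}\|u_2(t)\|_{\mathcal{L}^1(\Omega)}.
\end{equation*}
The Lipschitz bound of Hypothesis \ref{hyp:cond_advec_wellpo} for $f$ and that of Hypothesis \ref{hyp:cond_growth_wellpo} applied to $c=g+\partial_x f$ (with constant $C_l(g)+C_l(\partial_x f)$), together with the two a priori bounds of the previous step applied to $u_2$, show that the inhomogeneous part of the right-hand side is bounded by $K(T)\|\mathcal{X}_1-\mathcal{X}_2\|_{\mathcal{P}}$. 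A final Grönwall argument with zero initial data gives $\|w(t)\|_{\mathcal{L}^1(\Omega)}\leq K(T)\,t\,e^{C_1 t}\|\mathcal{X}_1-\mathcal{X}_2\|_{\mathcal{P}}$ for $t\in[0,T]$, and taking the supremum in $t$ concludes with $C(T) = K(T)e^{C_1 T}$.
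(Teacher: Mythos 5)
Your proposal is correct and follows essentially the same route as the paper: characteristics for existence and the explicit formula, the conservative form plus Gr\"onwall for the two a priori $\mathcal{L}^1$ bounds, and a sign-function (Kato-type) $\mathcal{L}^1$ estimate on the difference $u_1-u_2$ combined with the Lipschitz hypotheses on $f$, $\partial_x f$ and $g$ and a final Gr\"onwall for the stability bound. The only cosmetic difference is that you organize the difference equation around $(f_1,c_1)$ acting on $w$ with remainder terms against $u_2$, whereas the paper groups $f_2$ with $u_2-u_1$ and the coefficient differences against $u_1$; the two decompositions are equivalent.
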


\begin{proof}

\noindent{\textit{First step}}.The existence of the unique solution of \eqref{eq:model_abstract_linear} under the assumptions of Lemma \ref{lem:wellpo_linear_pb} is a classical result and can be found for instance in Chapter 6 of \cite{perthame2006transport}. 
It relies on the existence and the regularity of the characteristics \eqref{eq:characteristic} that hold since $f\in \mathcal{C}\left([0,T],\mathcal{C}^1_c (\Omega) \right)$.\\

Now, given $\X\in \mathcal{P}$, we rewrite the equation \eqref{eq:model_abstract_linear} in its conservative global form (linear version of \eqref{eq:model_abstract}):
\begin{equation}
\left\lbrace
\begin{array}{c@{}l@{}l}
\partial_t u(t,x) & +\, \partial_x (f(t,x;\X) u(t,x)) + g(t,x;\X)u(t,x) = 0,\,&\, \text{for } t\in(0, T),\, x\in \Omega,\\
\\
f(t,x,\X)u(t,x)& =  0,&\, \text{for } x \in \partial\Omega,\; t\in(0, T),\\
\\
u(0,x) &= Q_0(x), &\, \text{for } x \in \Omega.\\
\end{array}
\right.
\label{eq:model_abstract_linear_conservative}
\end{equation}
Hence, we have that
\begin{align*}
\frac{d}{dt}\int_\Omega u(t,x)dx & = \int_\Omega -g(t,x;\X)u(t,x)dx, 
\end{align*}
with $g(t,x;\X)$ defined in \eqref{eq:model_abstract_g}.
We recall that $g(t, \cdot; \X_i) \in \mathcal{C}_c^1(\Omega)$, $0\leq {A_i}(t)\leq 1$ for $t\in[0,T]$ and $i=1,2$ (cf. the definition of the set $\mathcal{P}$ \eqref{eq:admissible_set}).
\\
{\textit{Second step.} We now prove the estimates on $u$ and on $\partial_x u$. First, one can notice that $u(t,x) \geq 0 $ for $t\in[0,T]$ and $x\in\Omega$ since $Q_0\geq 0$. On the first hand, using the assumptions on $g$ from Hypothesis \ref{hyp:cond_growth_wellpo} and the fact that $\X \in \mathcal{P}$, we have
\begin{align*}
\dt \|u(t)\|_{\mathcal{L}^1(\Omega)} =\dt \int_\Omega u(t,x)dx & \leq \int_\Omega |g(t,x)u(t,x)|dx,\\
&\leq \|g\|_{\mathcal{L}^\infty([0,T]\times \Omega)}\int_\Omega u(t,x)dx,\\
&\leq \|r\|_{\mathcal{L}^\infty(\Omega)}\sup\limits_{0\leq s \leq T}\left|h(A_1(s),A_2(s))-N(s)\right|\int_\Omega u(t,x)dx.
\end{align*}
On the other hand, %since $Im(h)\in [C(\tau_C) -\varepsilon; C(\tau_C)]$ and $r(x) \geq 0$, 
{we have for $0\leq t \leq T$
%$$\dt \int_\Omega u(t,x)dx = \int_\Omega r(x) u(t,x) dx (h(A_1(t),A_2(t)) -N(t)) \geq 0 .$$
$$ - \|r\|_{\mathcal{L}^\infty(\Omega)}\sup\limits_{0\leq s \leq T}\left|h(A_1(s),A_2(s))-N(s)\right|\int_\Omega u(t,x)dx \leq \dt \|u(t)\|_{\mathcal{L}^1(\Omega)} .$$}
Then, we obtain the first estimate of Lemma \ref{lem:wellpo_linear_pb} using Gronwall's lemma.
As for the second estimate, since $f(t,x) = \partial_x f(t,x)= 0  $ for $x\in \partial\Omega$, we have 
\begin{align*}
\dt \|\partial_x u(t)\|_{\mathcal{L}^1(\Omega)} &\leq \int_\Omega |g(t,x)| |\partial_x u(t,x)|dx + \int_\Omega |\partial_x g(t,x)|u(t,x)dx,\\
& \leq \|g\|_{\mathcal{L}^\infty([0,T]\times \Omega)} \|\partial_x u(t)\|_{\mathcal{L}^1(\Omega)} + C\left(N(0),\|\partial_x g\|_{\mathcal{L}^\infty([0,T]\times\Omega)}\right) e^{t\|g\|_{\mathcal{L}^\infty([0,T]\times\Omega)}}.
\end{align*}
Once again, using Gronwall's lemma, we obtain the following estimate for $t\in [0,T]$
$$\|\partial_x u(t)\|_{\mathcal{L}^1(\Omega)}\leq C\left(N(0),\|\partial_x g\|_{\mathcal{L}^\infty([0,T]\times\Omega)}\right)te^{t\|g\|_{\mathcal{L}^\infty([0,T]\times\Omega)}}.$$
}

\noindent{\textit{Third step.} Now, we prove the stability condition.} We denote $u_i \defeq u(\cdot; \X_i)$ and $v \defeq u_1 - u_2.$ We obtain that $v$ satisfies the following problem
\begin{equation}
\left\lbrace
\begin{array}{c@{}l@{}r}
\partial_t v & +\, f_1\partial_x u_1 - f_2\partial_x u_2 + c_1u_1 - c_2u_2 = 0,\,&  \text{in}\, (0,T] \times \Omega,\\
\\
v(t,x)& =  0,\,& \text{for } x\in \partial\Omega, t\in [0,T]\\
\\
v(0,x) &= 0, \,& x \in \Omega,\\
\end{array}
\right.
\label{eq:model_abstract_diff}
\end{equation}
where $f_i(t,x) = f(t,x;\X_i) $ and $c_i(t,x) = c(t,x;\X_i).$
It implies that
$$\dt \|v\|_{\mathcal{L}^1(\Omega)} = \int_\Omega \text{sign}(u_1-u_2)\left[\partial_x(f_2 u_2 -f_1 u_1) +(g_1 u_1 -g_2u_2) \right]dx $$
where $g_i(t,x) = g(t, x; \X_i)$ defined in \eqref{eq:model_abstract_g}.\\
{We denote $$I_1(t) \defeq \int_\Omega \text{sign}(u_1 -u_2)\partial_x(f_2 u_2 -f_1 u_1)dx \quad \text{and} \quad I_2(t)\defeq \int_\Omega \text{sign}(u_1 -u_2) (g_1 u_1 -g_2u_2)dx .$$
On the first hand, we have 
\begin{align*}
I_1(t) & = \int_\Omega \text{sign}(u_1-u_2)\partial_x (f_2(u_2-u_1)) + \text{sign}(u_1-u_2)\partial_x((f_2-f_1)u_1)dx,\\
 &\leq \int_\Omega \partial_x \left(-f_2 |u_2-u_1| \right)dx + \int_\Omega \left|\partial_x \left((f_2-f_1)u_1 \right)\right|dx,\\
& \leq \int_\Omega \left| \partial_x (f_2-f_1) u_1 + (f_2-f_1)\partial_x u_1 \right|dx,\\
& \leq \| \partial_x(f_2-f_1)\|_{\mathcal{L}^\infty(\Omega)} \|u_1\|_{\mathcal{L}^1(\Omega)} + \| (f_2-f_1)\|_{\mathcal{L}^\infty(\Omega)} \|\partial_x u_1\|_{\mathcal{L}^1(\Omega)}.
\end{align*}
Using Hypothesis \ref{hyp:cond_advec_wellpo} and the estimates on $u$, we obtain
\begin{equation}
I_1(t) \leq \| \X_1-\X_2\|_\mathcal{P} \big(C_0 + C_2 t \big)e^{tC_1}
\label{ineq:stab_i1}
\end{equation}
On the second hand, we have
\begin{align*}
    I_2(t) & \leq \int_\Omega |g_1 -g_2||u_1|dx +\int_{\Omega} |g_2| |u_1 - u_2| dx,\\
    & \leq \|u_1\|_{\mathcal{L}^1(\Omega)} \|g_1 -g_2\|_{\mathcal{L}^\infty(\Omega)} + \|g_2\|_{\mathcal{L}^\infty(\Omega)} \|u_1 -u_2\|_{\mathcal{L}^1(\Omega)}. \\
\end{align*}
Using Hypothesis \ref{hyp:cond_growth_wellpo} and the estimates on $u$, we obtain
\begin{equation}
I_2(t) \leq \| \X_1-\X_2\|_\mathcal{P} C_3e^{tC_1} + C_1 \|v\|_{\mathcal{L}_1(\Omega)}.
\label{ineq:stab_i2}
\end{equation}
We note that the constant $C_1$ in the inequalities \eqref{ineq:stab_i1} and \eqref{ineq:stab_i2} is the same and depends on the uniform bound of the growth term $g$ which is proportional to the distance between the integral of the initial condition and the saturation constant $C(\tau_C)$.\\
Thanks to the inequalities \eqref{ineq:stab_i1} and \eqref{ineq:stab_i2} and using the notations of Hypotheses \ref{hyp:cond_advec_wellpo} and \ref{hyp:cond_growth_wellpo}, we obtain 
$$
\dt \| v\|_{\mathcal{L}^1(\Omega)} \leq C_1 \|v\|_{\mathcal{L}_1(\Omega)} + \left(C_0 + C_3 +C_2 t \right)e^{tC_1}\| \X_1-\X_2\|_\mathcal{P},
$$ 
where
\begin{itemize}
\item $0 < C_0 = C\left(C_l(\partial_x f), \|Q_0\|_{\mathcal{L}^1} \right)$,
\item {$ 0 < C_1 = r_+ \left|C(\tau_C)-C(N(0))\right|$,}
\item $0 < C_2 = C\left(C_l(f), \|Q_0\|_{\mathcal{L}^1}, \|\partial_x g\|_\infty  \right)$,
\item $0 < C_3 = C\left(C_l(g),\|Q_0\|_{\mathcal{L}^1}\right)$.
\end{itemize}
Finally, using the Gronwall's Lemma on the previous inequality and taking the supremum over the interval $[0,T]$, we obtain 
$$\sup\limits_{0\leq t\leq T}\| v(s)\|_{\mathcal{L}^1(\Omega)} \leq T\left( C_0 + C_3 + \frac{C_2}{2}T \right)e^{TC_1} \| \X_1-\X_2\|_\mathcal{P}. $$
}
\end{proof}
%--------------------------------------
{
\begin{lemma}
    Assume that $\X\defeq (Q,A_1,A_2) \in \mathcal{C}^1\left([0,T];\mathcal{C}^1(\Omega)\cap\mathcal{L}^1(\Omega)\right)\times \mathcal{C}^1([0,T])\times \mathcal{C}^1([0,T])$ is a solution of the system \eqref{eq:model_abstract} and $\mathcal{S}(\X) = \X$ for the mapping defined in \eqref{eq:mapping_contraction}. Assume Hypotheses \ref{hyp:initial_cond_wellpo},\ref{hyp:cond_advec_wellpo}, \ref{hyp:cond_growth_wellpo} hold.
    Then 
    \begin{itemize}
        \item \textit{(nonnegativity)} $Q(t,x)\geq 0$ and $A_i(t)\geq 0$ for $i=1,2$ and $\forall (t,x)\in[0,T]\times \Omega$,
        \item \textit{(boundedness)} $\int_\Omega Q_0(x)dx \leq \int_\Omega Q(t,x) dx \leq C(\tau_C)$ and $A_i(t)\leq 1$ for $i=1,2$ and $t\in [0,T]$.
    \end{itemize}
    \label{lem:sol_estimates}
\end{lemma}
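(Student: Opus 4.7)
The proof splits into three independent parts, one per component of $\X$. For the axons $A_1, A_2$, the key observation is that both constants $0$ and $1$ are stationary solutions of the corresponding logistic-type ODEs, so uniqueness (Lemmas \ref{lem:logistic_allee} and \ref{lem:logistic}) confines any trajectory with initial datum in $(0,1)$ to $(0,1)$ throughout $[0,T]$. For the density $Q$, the fixed-point identity $\mathcal{S}(\X)=\X$ means that $Q$ is precisely the solution of the linear problem \eqref{eq:model_abstract_linear} with $\X$ frozen as data, so Lemma \ref{lem:wellpo_linear_pb} applies and nonnegativity comes directly from the characteristic representation. Finally, the specific logistic structure of $g$ encoded in Hypothesis \ref{hyp:cond_growth_wellpo} reduces the evolution of the total mass $N(t)$ to a scalar linear ODE with nonnegative coefficients that integrates explicitly.

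\textbf{Axon bounds.} Since $\X$ is a solution of \eqref{eq:model_abstract}, the function $A_1$ satisfies the Allee-effect logistic ODE of Lemma \ref{lem:logistic_allee} with Allee threshold $t\mapsto \theta(N(t)/N(0))$, which is continuous and takes values in $[\theta_-,\theta_+]\subset(0,+\infty)$ by Hypothesis \ref{hyp:cond_wellpo_theta_r}. Because $A_1^0\in(0,1)$ and both $0$ and $1$ are stationary solutions of the right-hand side, Lemma \ref{lem:logistic_allee} yields $A_1(t)\in(0,1)$ on $[0,T]$. Similarly $A_2$ solves the logistic equation of Lemma \ref{lem:logistic} with carrying capacity $K=1$ and nonnegative continuous rate $t\mapsto r_{A_2}(\NC(t)/N(t))$; the closed-form formula of Lemma \ref{lem:logistic} gives $A_2(t)\in(0,1)$.

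\textbf{Nonnegativity of $Q$ and the scalar mass ODE.} Using the fixed-point relation, $Q$ is the unique solution of \eqref{eq:model_abstract_linear} with data $\X$, and Lemma \ref{lem:wellpo_linear_pb} provides the characteristic representation
\begin{equation*}
Q(t,x) = Q_0(X(0,t,x;\X))\exp\left(-\int_0^t c(\sigma,X(\sigma,t,x;\X);\X)\,d\sigma\right),
\end{equation*}
which is nonnegative since $Q_0\geq 0$ by Hypothesis \ref{hyp:initial_cond_wellpo}. Integrating the conservative form $\partial_t Q + \partial_x(fQ) = -gQ$ over $\Omega$, using the no-flux boundary condition $fQ|_{\partial\Omega}=0$ from Hypothesis \ref{hyp:cond_advec_wellpo}, and plugging in the explicit expression $-g(t,x;\X)=r(x)(h(A_1(t),A_2(t))-N(t))$ of Hypothesis \ref{hyp:cond_growth_wellpo}, one obtains
\begin{equation*}
\dt N(t) = \bigl(h(A_1(t),A_2(t)) - N(t)\bigr) R(t), \qquad R(t) \defeq \int_\Omega r(x) Q(t,x)\,dx \geq 0,
\end{equation*}
the nonnegativity of $R$ following from $r\geq r_-\geq 0$ and the nonnegativity of $Q$ just established.

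\textbf{Mass bounds and the main subtlety.} The previous equation is a scalar linear ODE in $N$; integrating with the factor $\exp(\int_0^t R)$ gives
\begin{equation*}
N(t) = N(0)e^{-\int_0^t R} + \int_0^t h(A_1(s),A_2(s)) R(s) e^{-\int_s^t R}\,ds.
\end{equation*}
Combining the bounds $C(\tau_C)-\varepsilon \leq h \leq C(\tau_C)$ from Hypothesis \ref{hyp:cond_growth_wellpo} with the identity $\int_0^t R(s) e^{-\int_s^t R}\,ds = 1 - e^{-\int_0^t R}$ sandwiches $N$:
\begin{equation*}
N(0) + (C(\tau_C)-\varepsilon-N(0))(1-e^{-\int_0^t R}) \leq N(t) \leq N(0) + (C(\tau_C)-N(0))(1-e^{-\int_0^t R}).
\end{equation*}
The upper inequality immediately yields $N(t)\leq C(\tau_C)$ since $N(0)<C(\tau_C)$ by Hypothesis \ref{hyp:initial_cond_wellpo}. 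The only delicate point is the lower bound: it requires the compatibility condition $\varepsilon \leq C(\tau_C)-N(0)$ so that the prefactor $C(\tau_C)-\varepsilon-N(0)$ is nonnegative, whence $N(t)\geq N(0) = \int_\Omega Q_0\,dx$. This condition is a natural small-$\varepsilon$ restriction, fully consistent with the wording ``small perturbation'' in Hypothesis \ref{hyp:cond_growth_wellpo}, and is the principal non-trivial ingredient of the argument.
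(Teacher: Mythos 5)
Your proof is correct and follows essentially the same route as the paper's: the stationary solutions $0$ and $1$ confine the axon ODEs, the characteristic representation gives nonnegativity of $Q$, and the logistic structure of $g$ from Hypothesis \ref{hyp:cond_growth_wellpo} controls the total mass. The only difference is that you integrate the scalar mass ODE explicitly with an integrating factor where the paper argues by differential inequality; your version has the minor merit of surfacing the compatibility condition $\varepsilon \le C(\tau_C)-N(0)$ that the paper's lower bound $\frac{d}{dt}N(t)\ge \int_\Omega r\,Q\,dx\,(N(0)-N(t))$ tacitly assumes when replacing $h$ by $N(0)$.
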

}{
\begin{proof}
The nonnegativity property of the solution of system \eqref{eq:model_abstract} is immediatly obtained using the characteristics for $Q$ and noticing that $0$ is a stationnary solution for $A_i$ for $i=1,2$. The solution can be written as
$$Q(t,x) = Q_0(X(0,t,x))e^{-\int_0^t c(\sigma, X(\sigma,t,x))d\sigma}$$
and has the sign of $Q_0$.
The upper bounds for $A_i$ $i=1,2$ are also obtained noticing that $1$ is a stationary solution of the Cauchy problems. 
Concerning the solution of the PDE $Q$, using Hypotheses \ref{hyp:cond_growth_wellpo} and \ref{hyp:cond_advec_wellpo}, the nonnegativity of $Q$ and the fact that $\int_\Omega Q_0(x)\leq C(\tau_C)$, we first notice that 
$$\int_\Omega r(x)Q(t,x)dx (N(0) - N(t)) \leq \frac{d}{dt} N(t) = \int_\Omega r(x)Q(t,x)dx (h(A_1(t),A_2(t)) - N(t)) \leq \int_\Omega r(x)Q(t,x)dx (C(\tau_C) - N(t))$$
for $t\in[0,T]$ where we denote $N(t) = \int_\Omega Q(t,x) dx$.\\
Hence, we obtain the following inequalities
$$N(0) \leq N(t) \leq  C(\tau_C).  $$
\end{proof}}
%-------------------------------------------------
Now, we prove Theorem \ref{thm:well_po}.\\
\begin{proof}[Theorem \ref{thm:well_po}]
\textit{Step 1. }We prove the well-posedness of the system \eqref{eq:model_abstract}. To this end, we apply a fixed point theorem for the mapping defined in \eqref{eq:mapping_contraction} on the set $\mathcal{P}(T)$ defined in \eqref{eq:admissible_set}:
\begin{equation*}
\mathcal{P} \defeq \mathcal{B}\times \left\lbrace A\in \mathcal{C}^1([0,T])\; | \;0\leq A(t) \leq 1, \text{ for } t\in[0,T] \right\rbrace^2,
\label{eq:admissible_set_solPDE}
\end{equation*}
where
\begin{equation*}  
\mathcal{B} \defeq \left\lbrace Q \in {\mathcal{C}^1\left([0,T];\mathcal{C}^1(\Omega)\cap\mathcal{L}^1(\Omega)\right)}  \, | \, C(N(0)) \leq \int_\Omega Q(s,x)dx \leq C(\tau_C) \, \text{for} \, s\in[0,T]  \right\rbrace
\end{equation*}
where $C(N(0))$ and $C(\tau_C)$ are positive constants and $$\| Q\|_{\mathcal{B}} \defeq \sup\limits_{0\leq s\leq T}\|Q(s)\|_{\mathcal{L}^1(\Omega)}.$$ 
The norm associated to this functional space is 
$$\| (Q,A_1,A_2) \|_{\mathcal{P}} \defeq  \left(\sup\limits_{0\leq s\leq T}\|Q(s)\|_{\mathcal{L}^1(\Omega)} + \sup\limits_{0\leq s\leq T} |A_1(s)| + \sup\limits_{0\leq s\leq T} |A_2(s)|\right).$$
For $\X= \left(Q, A_1, A_2\right) \in \mathcal{P}$ given, thanks to the Lemmas \ref{lem:logistic_allee}, \ref{lem:logistic} and \ref{lem:wellpo_linear_pb},  the linear problem \eqref{eq:model_abstract_linear} admits a unique solution 
$$(u, \widetilde{A_1}, \widetilde{A_2})\in {\mathcal{C}^1\left([0,T];\mathcal{C}^1(\Omega)\cap\mathcal{L}^1(\Omega)\right)} \times \mathcal{C}^1([0,T])\times \mathcal{C}^1([0,T]). $$
{Also, thanks to Lemma \ref{lem:wellpo_linear_pb}, choosing $T$ such that
\begin{equation}
    T<\frac{\log\left(\tfrac{C(\tau_C)}{N(0)} \right)}{\|r\|_\infty |C(\tau_C) - N(0)|} \times \frac{C(\tau_C) -N(0)}{C(\tau_C)-C(N(0))}= \frac{\lambda_1}{\|r\|_\infty \xi_1}
    \label{eq:ineq_T_linear_pb}
\end{equation}
where $\xi_1 \in (N(0); C(\tau_C)) $ and $\lambda_1 = \frac{C(\tau_C) -N(0)}{C(\tau_C)-C(N(0))}$
and that 
\begin{equation}
    T<\frac{\log\left(\tfrac{N(0)}{C(N(0))} \right)}{\|r\|_\infty |N(0) - C(N(0))|}\times \frac{N(0)-C(N(0)}{C(\tau_C)-C(N(0))} = \frac{\lambda_2}{\|r\|_\infty \xi_2}
    \label{eq:ineq_T_linear_pb_2}
\end{equation}
where $\xi_2 \in (C(N(0)); N(0)) $ and $\lambda_2 = \frac{N(0)-C(N(0))}{C(\tau_C)-C(N(0))}$
then the solution $u$ of the linear system \eqref{eq:model_abstract_linear} is such that $u\in\mathcal{B}.$
}
In order to prove that $\mathcal{S}$ has a fixed point, we introduce the following maps :
$$\F : \; \mathcal{P} \rightarrow \mathcal{B}, \quad \F(\mathcal{X})=u, $$ 
$$ \Gamma_1 : \; \mathcal{P} \rightarrow \mathcal{C}^1([0,T]),\quad  \Gamma_1(\mathcal{X})=\widetilde{A_1}, $$
$$ \Gamma_2 : \; \mathcal{P} \rightarrow \mathcal{C}^1([0,T]), \quad \Gamma_2(\mathcal{X})=\widetilde{A_2}. $$
The contraction property of $\mathcal{S}$ is {implied by some stability properties on the mappings} $\F, \; \Gamma_1$ and $\Gamma_2$.
In the following, we denote $\mathcal{X}_i = (Q_i,A_1^i,A_2^i)$ for $i=1,2$.\\

First, using the results in Lemma \ref{lem:logistic_allee} and the assumptions of the function $\theta$ stated in Hypothesis \ref{hyp:cond_wellpo_theta_r}, we note that
\begin{align}
\begin{split}
\sup\limits_{0\leq t \leq T}|\Gamma_1(\mathcal{X}_1) - \Gamma_1(\mathcal{X}_2)| & \leq  C\left( r_{A_1},\tfrac{1}{ (\theta_-)^2}\right)Te^{C(\theta )T} \sup\limits_{0\leq t \leq T}  \left|\theta\left(\frac{N(t;\mathcal{X}_1)}{N(0)} \right) -\theta\left(\frac{N(t;\mathcal{X}_2)}{N(0)} \right) \right|,\\ 
& \leq C\left( r_{A_1},\tfrac{1}{ (\theta_-)^2},C_l(\theta)\right)Te^{C(\theta )T} \sup\limits_{0\leq t \leq T}\|Q_1(t) - Q_2(t)\|_{\mathcal{L}^1(\Omega)},  
\end{split}\label{eq:wellpo_ineq_gamma1}
\end{align}
where $ 0<C\left( r_{A_1},\tfrac{1}{(\theta_-)^2},C_l(\theta)\right)<\infty$ and $0<C(\theta )<\infty $ are two constants.
\\

Similarly, using the results in Lemma \ref{lem:logistic} and the assumptions on the function $r_{A_2}$ stated in Hypothesis \ref{hyp:cond_wellpo_theta_r}, we note that
\begin{align}
\begin{split}
\sup\limits_{0\leq t \leq T}|\Gamma_2(\mathcal{X}_2) - \Gamma_2(\mathcal{X}_1)| & \leq C(A_2^0)T \sup\limits_{0\leq t \leq T} \left|r_{A_2}\left(\frac{N_c(t;\mathcal{X}_1)}{N(t;\mathcal{X}_1)}\right) -r_{A_2}\left(\frac{N_c(t;\mathcal{X}_2)}{N(t;\mathcal{X}_2)}\right) \right|,\\
&\leq C(A_2^0,C_l(r_{A_2}))T\sup\limits_{0\leq t \leq T} \left|\frac{N_c(t;\mathcal{X}_1)N(t;\mathcal{X}_2)-N_c(t;\mathcal{X}_2)N(t;\mathcal{X}_1)}{N(t;\mathcal{X}_1)N(t;\mathcal{X}_2)} \right| ,\\
& \leq C\left(A_2^0,C_l(r_{A_2}),\tfrac{1}{C(N(0))^2}, C(\tau_C) \right)T\sup\limits_{0\leq t \leq T} \|Q_1(t) -Q_2(t) \|_{\mathcal{L}^1(\Omega)},
\end{split}\label{eq:wellpo_ineq_gamma2}
\end{align}
where $0<C\left(A_2^0,C_l(r_{A_2}),\tfrac{1}{C(N(0))^2}, C(\tau_C) \right)<\infty$ is a constant. Moreover, one can check that the constant $C\left(A_2^0,C_l(r_{A_2}),\tfrac{1}{C(N(0))^2}, C(\tau_C) \right) $ is bounded by above and below by strictly positive constants independant of the initial data $A_2^0$ and $Q_0$ since there exist constants $m_1>0$ and $m_2>0$ %$C(N(0))>0$ 
{such that $$m_1<A_2^0<1 \quad \text{and} \quad m_2<C(N(0))<C(\tau_C).$$} 
Also, thanks to the stability result of Lemma \ref{lem:wellpo_linear_pb}, we note that
\begin{equation}
\sup\limits_{0\leq t\leq T}\|\mathcal{F}(\X_1)-\mathcal{F}(\mathcal{X}_2)\|_{\mathcal{L}^1(\Omega)} \leq C(T)T \|\mathcal{X}_1 -\mathcal{X}_2\|_\mathcal{P}.
\label{eq:wellpo_ineq_f}
\end{equation}
Finally, using the inequaties \eqref{eq:wellpo_ineq_gamma1}, \eqref{eq:wellpo_ineq_gamma2} and \eqref{eq:wellpo_ineq_f}, we obtain that there exit two constants $0<C_1<\infty$ and $0<C_2<\infty$ such that
$$\|\mathcal{S}(\X_1) - \mathcal{S}(\X_2)\|_\mathcal{P} \leq C_1(T) T e^{C_2 T} \|\mathcal{X}_1 -\mathcal{X}_2\|_\mathcal{P}  $$
{where, using the notations previously introduced and those of Hypotheses \ref{hyp:initial_cond_wellpo}, \ref{hyp:cond_advec_wellpo}, \ref{hyp:cond_growth_wellpo} and \ref{hyp:cond_wellpo_theta_r}, we have 
\begin{itemize}
    \item $C_1(T) = C\left(C_l(r_{A_2}),A_2^0, \tfrac{1}{C(N(0))^2}, C(\tau_C), r_{A_1}, \tfrac{1}{(\theta_-)^2}, C_l(\theta), C_l(\partial_x f),\|\partial_x g\|_\infty, C_l(g),T\right)$,
    \item $C_2 = C\left(C(\theta), r_+, |C(\tau_C) - C(N(0))| \right).$
\end{itemize}
}
{\noindent Moreover, we introduce the new constants $\tilde{C}_1(T)$ and $\tilde{C}_2$ independent of $A_2^0$ and $C(N(0))$ such that 
$$C_1(T) \leq \tilde{C_1}\left( C_l(r_{A_2}), \tfrac{1}{(m_2)^2}, C(\tau_C), r_{A_1}, \tfrac{1}{(\theta_-)^2}, C_l(\theta), C_l(\partial_x f),\|\partial_x g\|_\infty, C_l(g),T\right) $$
and 
$$ C_2 \leq \tilde{C}_2 \left(C(\theta), r_+, |C(\tau_C) - m_2| \right).$$
\noindent Since the map 
$$g:t\in[0,T] \mapsto g(t)\defeq \tilde{C}_1(t) t e^{\tilde{C}_2 t}$$ 
is continuous, increasing and $g(0)=0$, there exists $T_1>0$ such that 
\begin{equation}
\tilde{C}_1(T_1) T_1 e^{\tilde{C}_2 T_1}<1.
\label{eq:ineq_T_contraction}
\end{equation}
}
{Choosing $T_1>0$ satisfying  \ref{eq:ineq_T_linear_pb},\ref{eq:ineq_T_linear_pb_2} and \ref{eq:ineq_T_contraction} with $T_1< \tilde{T}$  {that is }
{$$T_1 < \min\left(\frac{\lambda_1}{r_+ C(\tau_C))}, \frac{e^{-\tilde{C}_2 \tilde{T}}}{\tilde{C}_1(\tilde{T})}, \frac{\lambda_2}{r_+ C(\tau_C))}\right)$$} 
it follows that $\mathcal{S}$ is a strict contraction on $\mathcal{P}$.}
Consequently, the contraction mapping Theorem implies there exists a unique $\mathcal{X}\in \mathcal{P}$ of \eqref{eq:model_abstract} on the time interval $[0,T_1]$.\\
\textit{Step 2.} {Now, we prove that we can extend the solution to $[0, T]$ for any $T>0$.
In order to do so, since $Q(t)\in \mathcal{C}^1(\Omega) \cap \mathcal{L}^1(\Omega) $ for $0\leq t \leq T_1$, we prove that we can repeat the previous arguments to extend the solution to the time interval $[T_1, 2T_1]$.
We first note that Lemma \ref{lem:sol_estimates} ensures that any solution $Q$ of \eqref{eq:model_abstract} satisfies 
$$N(0) \leq \int_\Omega Q(t,x) dx \leq C(\tau_C).$$
It implies that $N(T_1) \geq N(0)$. 
Hence, there exists a positive constant $C(N(T_1))$ such that 
$$ 0<C(N(T_1))<N(T_1) \leq C(\tau_C)$$
and that {
\begin{equation}
    C(N(T_1)) = N(T_1) - (N(0)-C(N(0)))\frac{C(\tau_C)-N(T_1)}{C(\tau_C)-N(0)}.
    \label{eq:iteration_lower_bounds_Q}
\end{equation}
}
Moreover, we define the space on which we apply the contraction mapping Theorem as the following
$$\mathcal{P}_1 = \mathcal{B}_1\times \left\lbrace A\in \mathcal{C}^1([T_1,2T_1])\; | \;0\leq A(t) \leq 1, \text{ for } t\in[T_1,2T_1] \right\rbrace^2,$$
where
\begin{equation*}  
\mathcal{B}_1 \defeq \left\lbrace Q \in {\mathcal{C}^1\left([T_1,2T_1];\mathcal{C}^1(\Omega)\cap\mathcal{L}^1(\Omega)\right)}  \, | \, C(N(T_1)) \leq \int_\Omega Q(s,x)dx \leq C(\tau_C) \, \text{for} \, s\in[T_1,2T_1]  \right\rbrace.
\end{equation*}
}{ 
Hence, we can proceed similarly as in \textit{Step 1}.
Thanks to \eqref{eq:iteration_lower_bounds_Q}, we have 
$$\lambda_1 = \frac{C(\tau_C)-N(0)}{C(\tau_C) -C(N(0))} = \frac{C(\tau_C)-N(T_1)}{C(\tau_C) -C(N(T_1))} $$
and the dependance to the initial condition which appears in \eqref{eq:ineq_T_linear_pb} does not cause an issue since 
$$\frac{\lambda_1}{r_+ C(\tau_C)}\leq \lambda_1\frac{\log\left(\tfrac{C(\tau_C)}{N(T_1)} \right)}{\|r\|_\infty |C(\tau_C) - N(T_1)|} = \frac{\lambda_1}{\|r\|_\infty \xi}$$
where $\xi \in (N(T_1); C(\tau_C)).$
As for the dependance to the initial condition which appears in \eqref{eq:ineq_T_linear_pb_2}, thanks to \eqref{eq:iteration_lower_bounds_Q}, we have
$$\lambda_2 = \frac{N(0) - C(N(0))}{C(\tau_C)-C(N(0))} = \frac{N(T_1) - C(N(T_1))}{C(\tau_C)-C(N(T_1))}$$
and 
$$\frac{\lambda_2}{r_+ C(\tau_C)}\leq \lambda_2\frac{\log\left(\tfrac{N(T_1)}{C(N(T_1))} \right)}{\|r\|_\infty |N(T_1) - C(N(T_1))|} = \frac{\lambda_2}{\|r\|_\infty \xi}$$
where $\xi \in (C(N(T_1)) ; N(T_1)).$\\
{Also, thanks to \eqref{eq:iteration_lower_bounds_Q} and the fact that $N(0) \leq N(T_1)$, the following holds 
$$C(N(T_1) - C(N(0)) = \frac{C(\tau_C) - C(N(0))}{C(\tau_C) - N(0)} \big( N(T_1) -N(0)\big) \geq 0. $$
Then, we have that 
$$ \frac{e^{-\tilde{C}_2 \tilde{T}}}{\tilde{C}_1(\tilde{T})} \leq \frac{e^{-{C}_2(C(N(T_1)) \tilde{T}}}{{C}_1(\tilde{T}, C(N(T_1)))}.$$
}
Using Lemmas \ref{lem:logistic_allee}, \ref{lem:logistic} and \ref{lem:wellpo_linear_pb} and since $2T_1-T_1 = T_1<\tilde{T}$, we obtain the strict contraction on the mapping on $\mathcal{P}_1$  with the same condition 
{$$T_1 < \min\left(\frac{\lambda_1}{r_+ C(\tau_C))}, \frac{e^{-\tilde{C}_2 \tilde{T}}}{\tilde{C}_1(\tilde{T})}, \frac{\lambda_2}{r_+ C(\tau_C))}\right).$$} 
Hence, by iterating, we extend the solution to the full interval $[0,T]$.\\
\textit{Step 3.} The uniqueness of the solution of the system is direct consequence of the stability properties of Lemmas \ref{lem:logistic_allee}, \ref{lem:logistic} and \ref{lem:wellpo_linear_pb} and the structural property of our system which implies that $\mathcal{X} = 0$ is a stationary solution.
}

\end{proof}

\section{Details about the numerics}\label{app:numerics}
The dynamical system is implemented in Python and the algorithm can be found at (\href{https://github.com/MarieJosec/PDE_Axons_Innerv}{\texttt{https://github.com/MarieJosec/PDE\_Axons\_Innerv}}).
\subsection{Scheme for the numerical approximation of the solution}

\noindent A classical upwind scheme in space and an explicit Euler scheme in time is proposed to approximate the system that can be written as
$$\begin{cases}
\partial_t Q(t,x)+\partial_x[F(x,A_1(t),A_2(t), N(t),\NC(t))Q(t,x)]=R(x,Q(t,x),A_1(t),A_2(t), N(t)),\\
\\
\frac{d}{dt} A_1(t)=G_1(A_1(t), N(t)),\\
\\
\frac{d}{dt} A_2(t)=G_2(A_2(t),\NC(t), N(t)),
\end{cases}$$
Note that the function $F$ always takes non negative values.
\medskip

\noindent Consider a constant step "time" discretization of the interval $[0,T]$ with $t_n=n dt$ and $n\in\{0,\cdots, N_T\}$, where $T=t_{N_T}$ and $dt$ is the time step. We also consider "space" discretization of the interval $[-L,L]$, $x_i=(i+\frac12) h$, and $i\in \{-p,\cdots, p-1\}$, $p \in \mathbb{N^*}$, where $h=\frac{L}{p}$ is the space step. We will use a finite volume approach and thus introduce  the points $x_{i+\frac12}=(i+1)h$ and $x_{i-\frac12}=ih$ that can be viewed as being the vertices of volume cells $M_{i}=[x_{i-\frac12},x_{i+\frac12}]$ whose centers are the $x_i$ where $L=x_{p-\frac12}$ .\\

\noindent We look for an approximation $Q^n_i$ of $Q(t_n,x_i)$, $N^n$ (resp. $\NC^n$) an approximation of $\int_{-L}^L Q(t_n,x)\, dx$ (resp.$\int_{0}^L Q(t_n,x)\, dx$), $A_1^n$ (resp. $A_2^n$) an approximation of $A_1(t_n)$ (resp. $A_2(t_n)$). To secure the positivity of $A_1^n$ and $A_2^n$, we approximate the logarithmic function $A_1^n$ and $A_2^n$ to then go back to the approximation of  $A_1^n$ and $A_2^n$  by taking the exponential function. 
\medskip

\noindent The scheme is then the following:
$$\frac h{dt}(Q^{n+1}_i-Q^n_i)+\left(F(x_{i+\frac12},A_1^n,A_2^n,N^n,N^n_c)Q^n_i-F(x_{i-\frac12},A_1^n,A_2^n,N^n,N^n_c)Q^n_{i-1}\right)=h R(x_i,Q_i^n,A_1^n,A_2^n,N^n),$$
for $\, n\geq 0$ and $ i\in\{-p+1,\cdots, p-1\}$ and 
$$
\begin{cases}
\frac{1}{dt}(\log(A_1^{n+1})-\log(A_1^n))=G_1(\exp({A_1^n}), N^n),\qquad n\geq 0,\\
\\
\frac1{dt}(\log(A_2^{n+1})-\log(A_2^n))=G_2(\exp{A_2^n}, N_c^n),\qquad n\geq 0,\\
\\
A_1^{n+1}  = \exp(\log(A_1^{n+1})),\qquad n\geq 0,\\
\\
A_2^{n+1}  = \exp(\log(A_2^{n+1})),\qquad n\geq 0.\\
\end{cases}
$$
We use a trapezoidal rule to approximate the integral terms:
$$\begin{cases}
N^n=\frac{h}{2}\left[Q_{-p}^n+\sum_{i=-p+1}^{p-2} 2Q_i^n + Q_{p-1}^n\right],\\
\\
N_c^n=\frac{h}{2}\left[Q_0^n+\sum_{i=i}^{p-2} 2Q_i^n+Q^n_{p-1}\right].
\end{cases}
$$
The approximation of the boundary conditions can be naturally written as
$$F(x_{i},A_1^n,A_2^n,N^n,N^n_c)Q^n_i=0,\, \hspace{3mm } n\geq 0, \hspace{3mm } i \in \{-p,p-1\},
$$
and the initial condition is approximated by 
$$\begin{cases}
Q^0_i=Q^0(x_i),\, i\in \{-p,\cdots, p-1\},\\
\\
A_1^0,\, A_2^0 \text{ given. }
\end{cases}$$
\noindent Note that the stability of the  transport scheme requires the CFL condition 
$$\frac{dt}h F(x_{i+\frac12},A_1^n,A_2^n,N^n,N^n_c)\leq 1, \quad \forall n\geq, \quad \forall i=-p,\cdots, p-1. $$

\subsection{Calibration through an optimization method}\label{app:calib}

For a given parameter set $\vartheta$, the computation of our 2-dimensional criterion $G(\vartheta) = (G_1(\vartheta), G_2(\vartheta))$ requires two numerical simulation approximations, one acting as if we were in the control group (AA) and a second one including a denervation treatment (OHDA). The integrals over time intervals are computed with a trapezoidal rule based on the discretization of the numerical scheme. Needless to say here that the computation of the gradient $\nabla_\vartheta G_i(\vartheta)$ of any coordinate of the criterion is not straightforward. This means that we have no guide to find the sets of parameters with small value of the criterion. The space we have to explore is the whole hypercube $\mathcal H$ of dimension $14$ defined by the range column of Table~\ref{tab_parameters}. That is why we rely on a two-stage algorithm, see Figure~\ref{fig:select param}. At first, we explore naively the whole range of parameters, from which we adjust an instrumental Gaussian distribution truncated to $\mathcal H$. In a second stage, the instrumental distribution is used to draw new parameter sets that are informed by the data and the biological knowledge and we seek among those draws for the best parameter sets according to our criterion.

We start the algorithm with a massive exploration of the whole hypercube $\mathcal H$ of dimension $14$ defined by the range column of Table~\ref{tab_parameters} drawn as a Sobol sequence, that is to say using a quasi-Monte Carlo algorithm. This gives us the collection $\mathcal J_0$ of size $n^\text{QMC}_0=2^{18}$ parameter sets that are distributed uniformly over the hypercube $\mathcal H$. The $2$-dimensional criterion is then computed for each parameter set $\vartheta$ in $\mathcal J_0$.

Among the parameter sets in $\mathcal J_0$, we select the $0.2\%$ best sets according to the first component of the criterion $G_1(\vartheta)$, i.e. the $0.2\%$ parameter sets that have the smallest value of $G_1(\vartheta)$. This collection is filtered again according to the second component of the criterion $G_2(\vartheta)$, i.e. we keep only the $20\%$ best sets according to $G_2(\vartheta)$. This gives us the collection $\mathcal J_1$ of parameter sets that are consistent with the cell data and the chronological knowledge, i.e. that have low values of both $G_1(\vartheta)$ and $G_2(\vartheta)$.

To adjust the instrumental distribution to the collection $\mathcal J_1$, we compute the mean and the covariance matrix of the collection, see Table~\ref{tab:meancov}. The instrumental distribution is then the multivariate Gaussian distribution centered at the means given in Table~\ref{tab:meancov}, with a diagonal covariance matrix set according to the observed variances of this Table, and truncated (or conditioned) to stay in $\mathcal H$. 

The second stage of the algorithm starts with many quasi-Monte Carlo draws from the instrumental distribution, using a transformation of another Sobol sequence of dimension $14$. The $n^\text{QMC}_2=2^{18}$ draw form the collection $\mathcal J_2$ of parameter sets. The $2$-dimensional criterion is then computed for each parameter set $\vartheta$ in $\mathcal J_2$. This collection is then filtering to keep only the $0.7\%$ best sets according to the first component of the criterion $G_1(\vartheta)$, and then among them the $50\%$ best sets according to the second component of the criterion $G_2(\vartheta)$. This gives us the final collection $\mathcal J_3$ of parameter sets that are consistent with the cell data and the chronological knowledge, i.e. that have low values of both $G_1(\vartheta)$ and $G_2(\vartheta)$. The final collection $\mathcal J_3$ is summarized in Figure~\ref{fig:step5}.

\begin{table}[H]

\centering
\begin{tabular}{l c c c c c c c c c c c c c c c c c c c c c }%{|l|c|c|c|c|c|c|c|c|c|c|c|c|c|c|c|c|c|c|c|c|c|}
 \hline
  & \scriptsize{$\pi_0$} & \scriptsize{$\beta$} & \scriptsize{$\delta$} & \scriptsize{$\gamma_r$} & \scriptsize{$s_r$} & \scriptsize{$\tau_C$} & \scriptsize{$\mu_1$} & \scriptsize{$\mu_2$} & \scriptsize{$r_{A_1}$} & \scriptsize{$\bar r_{A_2}$} &  \scriptsize{$x_{1,\pi}$} & \scriptsize{$\epsilon_{1,\pi}$} & \scriptsize{$s_\theta$} &  \scriptsize{$s_{A_2}$}  \\
  \hline
\scriptsize{Mean} & \scriptsize $3.069$ & \scriptsize  $0.526$ & \scriptsize $0.497$ &\scriptsize  $4.202$ & \scriptsize $5.046$ & \scriptsize $184.452$ & \scriptsize $0.234$ & \scriptsize $0.444$ & \scriptsize $0.032$ & \scriptsize $2.1$ & \scriptsize $34.249$ & \scriptsize $4.77$ & \scriptsize $16.006$ & \scriptsize $4.382$  \\
\scriptsize{Variance}  & \scriptsize $2.814$ & \scriptsize  $0.297$ & \scriptsize $0.29$ &\scriptsize  $2.93$& \scriptsize $2.833$ & \scriptsize $48.647$ & \scriptsize $0.516$ & \scriptsize $0.294$ & \scriptsize $0.024$ & \scriptsize $1.551$ & \scriptsize $3.706$ & \scriptsize $2.839$ & \scriptsize $2.134$ & \scriptsize $2.916$  \\
 \hline
\end{tabular}
\caption{Summary statistics of the collection $\mathcal J_1$ of parameter sets.}
\label{tab:meancov}
\end{table}

\begin{figure}[ht!]
\includegraphics[width=\textwidth]{./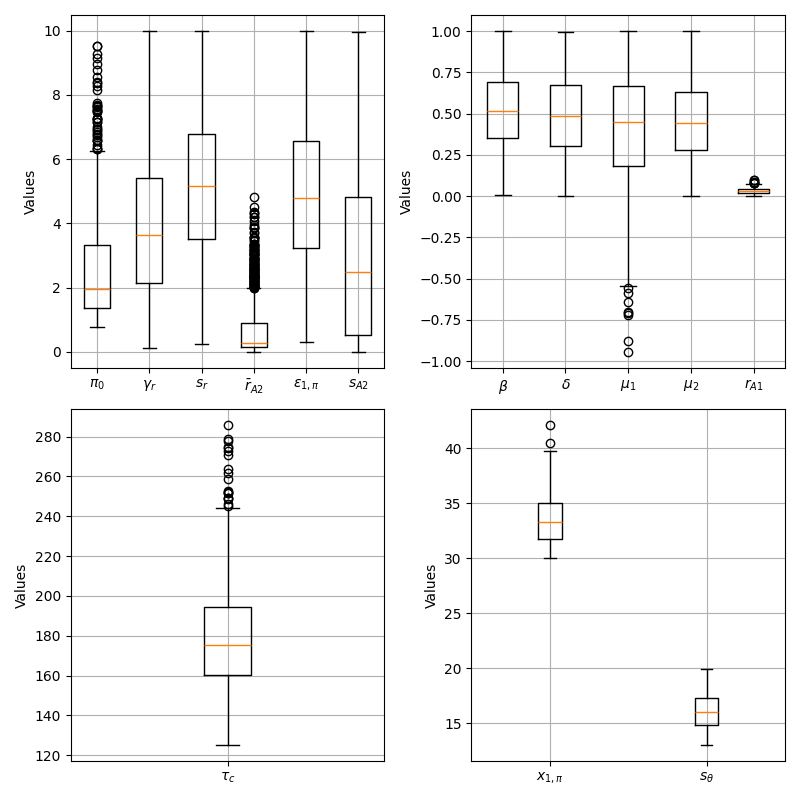}
\centering
\caption{Distribution of the collection $\mathcal{J}_3$ of parameter sets that are calibrated using our 2-dimensional criterion to fit the data and a few biological knowledge. These boxplots should be compared with the admissible ranges defined in Table~\ref{tab_parameters}.}\label{fig:step5}
\end{figure}

\end{appendix}

\bibliographystyle{apalike}
\bibliography{amu}

\end{document}